\documentclass[a4paper,12pt]{amsart}

\usepackage{amsmath}
\usepackage{amsthm}
\usepackage{amssymb}
\usepackage{stmaryrd}
\usepackage{MnSymbol}
\usepackage{bbm}
\usepackage{bm}
\usepackage{dsfont}

\usepackage[top=30truemm, bottom=30truemm, left=25truemm, right=25truemm]{geometry}

\numberwithin{equation}{section}

\newtheorem{theorem}{Theorem}[section]
\newtheorem{lemma}[theorem]{Lemma}

\newtheorem{proposition}[theorem]{Proposition}

\newtheorem{definition}[theorem]{Definition}

\theoremstyle{definition}

\renewcommand{\Re}{\mathrm{Re}\hspace{1pt}}
\renewcommand{\Im}{\mathrm{Im}\hspace{1pt}}

\newcommand{\meas}{\mathrm{meas}} 
\newcommand{\m}{\mathbf{m}}

\title[Value distribution with general shifts]{On the value distribution of the Riemann zeta-function with general shifts}
\author{Keita Nakai}
\date{}

\begin{document}

\begin{abstract}
   This paper studies the value distribution of the Riemann zeta-function under general shifts. 
   We establish discrepancy estimates for the Bohr--Jessen limit theorem for a broad class of shifts. 
   We also prove a universality theorem for logarithmic-power shifts, which are not covered by previous universality results for general shifts. 
   Furthermore, we obtain a quantitative Bohr--Jessen limit theorem for logarithmic-power shifts.

\end{abstract}

\maketitle

\section{Introduction and main results} 

\subsection{Background}

Let $s = \sigma + it$ be a complex variable. 
The Riemann zeta-function is defined by $\zeta(s) = \sum_{n=1}^{\infty} n^{-s}$ for $\sigma > 1$. 
The Riemann zeta-function can be continued meromorphically to the whole complex plane $\mathbb{C}$. 

The study of the value distribution of zeta-functions began with Bohr's denseness results in the 1910s. 
Later, in 1930s, Bohr and Jessen~\cite{BJ, BJ32} refined these results as follows. 

\begin{theorem}[Bohr--Jessen's limit theorem]\label{BJ limit}
    Let $\sigma > 1/2$, let $R$ be any rectangle whose sides are parallel to the coordinate axes, and let 
\[
\mathbb{P}_T(\log\zeta(\sigma + it) \in R) := \frac{1}{T}\meas\left\{t \in [T, 2T] : \log\zeta(\sigma + it) \in R \right\},
\]
where $\meas$ denotes the one-dimensional Lebesgue measure. 
Then the limit 
\[
\lim_{T \to \infty} \mathbb{P}_T(\log\zeta(\sigma + it) \in R) = W(\sigma, R)
\]
exists.
\end{theorem}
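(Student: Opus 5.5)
We prove the existence of the limit by comparing $\log\zeta(\sigma+it)$ with a random Euler product, in three steps. First, in the region of absolute convergence, we use an equidistribution statement on the infinite torus $\mathbb{T}^\infty$. Second, we approximate by truncated Euler products in mean square for $1/2<\sigma\le 1$. Third, we use a continuity argument for the limiting measure to pass from smoothed statements to rectangles.

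\textbf{Random model.} Let $\{X(p)\}_p$ be independent random variables uniformly distributed on the unit circle. Define
\[
\log\zeta(\sigma,X)=-\sum_p \log(1-X(p)p^{-\sigma}),
\]
which converges almost surely for $\sigma>1/2$ by Kolmogorov's three-series theorem: the terms are independent, have mean zero in the linear part, and have summable variance $\sum_p p^{-2\sigma}$. We then set $W(\sigma,R)=\mathbb{P}(\log\zeta(\sigma,X)\in R)$.

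\textbf{Step 1 (finite Euler products).} For fixed $N$, the curve $t\mapsto(p^{-it})_{p\le N}$ is equidistributed on the torus $\mathbb{T}^{\pi(N)}$ as $T\to\infty$. This is Kronecker--Weyl, since $\log p$ for $p\le N$ are linearly independent over $\mathbb{Q}$ by unique factorization. Hence $\log\zeta_N(\sigma+it)=-\sum_{p\le N}\log(1-p^{-\sigma-it})$, with $t$ uniform in $[T,2T]$, converges in distribution to $\log\zeta_N(\sigma,X)$. The function involved is continuous on the torus, so Weyl's criterion applies directly.

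\textbf{Step 2 (approximation).} We show that for $\sigma>1/2$,
\[
\limsup_{T\to\infty}\frac1T\int_T^{2T}\min\bigl(1,|\log\zeta(\sigma+it)-\log\zeta_N(\sigma+it)|\bigr)\,dt\to0\quad (N\to\infty).
\]
This is the main obstacle. The plan is to bound the mean square of $\zeta(s)\zeta_N(s)^{-1}-1$ via Carlson's mean value theorem. Write $\zeta(s)\prod_{p\le N}(1-p^{-s})=\sum_{n} a_N(n)n^{-s}$, where $a_N(n)=1$ if $n=1$ or all prime factors of $n$ exceed $N$, and $a_N(n)=0$ otherwise. The mean square of this series minus $1$ is asymptotically $\sum_{n>N}a_N(n)^2n^{-2\sigma}\to0$ for $\sigma>1/2$. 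Carlson's theorem requires a finite-order growth bound and the convexity estimate, both standard.

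Passing from $\zeta/\zeta_N$ close to $1$ in $L^2$ to closeness of the logarithms needs two ingredients. One is to avoid zeros: by the zero-density estimate $N(\sigma',T)=o(T)$ for $\sigma'>1/2$, the logarithm can be defined by continuation, with exceptional $t$ of measure $o(T)$. The other is an application of Jensen's formula, or Borel--Carathéodory on small discs, to control the logarithm. An alternative I would try first is to work instead with the Dirichlet polynomial approximation $\log\zeta(s)\approx\sum_{n\le X}\Lambda(n)n^{-s}/\log n$. Its mean-square error is known to be small for $\sigma>1/2$ outside a set of $t$ of density $o(1)$, again by zero-density estimates.

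\textbf{Step 3 (conclusion).} Steps 1 and 2, combined with the almost sure convergence $\log\zeta_N(\sigma,X)\to\log\zeta(\sigma,X)$, give weak convergence of the distributions of $\log\zeta(\sigma+it)$ to the law $Q_\sigma$ of $\log\zeta(\sigma,X)$. This follows from the standard approximation lemma (Billingsley, Theorem 3.2) for Lipschitz test functions. Weak convergence yields convergence on every rectangle $R$ with $Q_\sigma(\partial R)=0$.

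To cover all rectangles, we show $Q_\sigma$ is absolutely continuous. It is an infinite convolution of the laws of $-\log(1-X(p)p^{-\sigma})$, each of which is a closed convex-type curve distribution. The convolution of a few of them already has a density, as Bohr and Jessen showed via the convexity of these curves and Fourier decay of order $|\xi|^{-1/2}$ per factor. Hence $Q_\sigma(\partial R)=0$ for every rectangle $R$, and the limit $W(\sigma,R)=Q_\sigma(R)$ exists.
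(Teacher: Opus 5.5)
The paper does not prove this theorem; it quotes it from Bohr and Jessen. The natural comparison is therefore with the classical proof and with the paper's own argument for the analogous weak convergence under the shifts $(\log\tau)^{\alpha_j}$, Proposition~\ref{prop uni}.

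Your outline is correct and has the same architecture as that proposition. Your Step~1 is the $t$-shift analogue of Lemma~\ref{Prob joint}. Your use of Billingsley's approximation theorem with Lipschitz test functions is the $E_1+E_2+E_3$ splitting there. Your a.s.\ convergence of $\log\zeta_N(\sigma,X)$ plays the role of Endo's Lemma~5. Your Step~3 supplies the absolute continuity that the paper quotes from Bohr and Jessen as $W(\sigma,R)=\iint_R F$.

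The genuine difference is the approximation step. Your primary route is the classical multiplicative one: Carlson's mean value theorem for $\zeta(s)\zeta_N(s)^{-1}-1$, followed by a transfer to logarithms. The paper's route (Lemma~\ref{appro X}) is additive, and it is your ``alternative'':
\begin{itemize}
\item it discards $t$ near zeros using a zero-density estimate (Lemma~\ref{meas log});
\item it replaces $\log\zeta$ by a Dirichlet polynomial of length a power of $\log T$ via Endo's Lemma~\ref{appro};
\item it controls the tail between $X$ and that length by a mean-value theorem for Dirichlet polynomials (Lemma~\ref{mean1}).
\end{itemize}
The Carlson route needs only mean values of $\zeta$ itself and is purely qualitative. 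The paper's route costs a zero-density input but gives explicit error terms. That is what the paper needs for its discrepancy bounds, and for shifts $\gamma(t)$ where only first-derivative-test mean values of Dirichlet polynomials are available.

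One point in your transfer step needs care. $L^2$-smallness of $h=\zeta/\zeta_N-1$ at the single abscissa $\sigma$ does not fix the branch of $\log\zeta(\sigma+it)$, which is defined by continuation along $[\sigma,\infty)+it$. A clean way to handle this:
\begin{itemize}
\item $h$ is holomorphic in $\Re s>0$, $s\neq 1$, so zeros of $\zeta$ play no role and $|h|^2$ is subharmonic.
\item Hence $\sup_{\sigma\le u\le 2}|h(u+it)|^2$ is bounded by the area integral of $|h|^2$ over a fixed neighbourhood of the segment.
\item Integrate in $t$, using mean-square bounds uniform in $\Re s\ge \sigma-\delta>1/2$ together with $|h|\le 1/N$ for $\Re s\ge 2$. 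Outside a set of $t$ of density $o_N(1)$, this gives $|h|<1/2$ on the whole ray.
\item On that ray $\zeta\neq0$, and $\log\zeta-\log\zeta_N=\mathrm{Log}(1+h)$ with $|\mathrm{Log}(1+h)|\le 2|h|$.
\end{itemize}
This makes the zero-density input unnecessary. It also avoids Borel--Carath\'eodory on small discs, which would need an a priori bound at the centre of each disc.
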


Note that the above formulation is slightly different from the original statement in~\cite{BJ, BJ32}. 
In other words, $W(\sigma, R)$ represents the probability that $\log\zeta(\sigma + it)$ belongs to $R$ when $t$ is chosen randomly. 
Bohr and Jessen also proved that $W(\sigma, R)$ can be written as
\[
W(\sigma, R)=\iint_R F(z)\,du\,dv,
\]
where $F(z)$ is a real-valued, bounded, continuous, and nonnegative function for $1/2 < \sigma \le 1$. 
Moreover, it is known that $W(\sigma, R)$ can be written as 
\[
W(\sigma, R) = \mathbb{P}(\log\zeta(\sigma, X) \in R),
\]
where $\zeta(\sigma, X)$ is a random Euler product attached to the Riemann zeta-function, that is 
\[
\zeta(s, X) :=\prod_{p} \left(1-X(p)p^{-s} \right)^{-1}.
\]
Here, $X = \{X(p)\}_{p}$ is a sequence of independent random variables (we define it in Section~\ref{Pre}). 

We are also interested in the discrepancy estimate
\[
\left|\mathbb{P}_T(\log \zeta(\sigma + it) \in R) - \mathbb{P}(\log\zeta(\sigma, X) \in R)\right|.
\]
In 1980s, Matsumoto~\cite{Ma85, Ma87, Ma88} first evaluated some discrepancy estimates for this quantity.  
Since then, the discrepancy bound has been improved by several authors. 
The best bound currently known was obtained by Lamzouri--Lester--Radziwi\l\l~\cite{LLR}. 

On the other hand, Voronin extended Bohr's denseness theorem to multidimensional denseness results~\cite{Vo72}, and then established the celebrated universality theorem for the Riemann zeta-function~\cite{Vo75}. 
A modern formulation of universality for the Riemann zeta-function is as follows.

\begin{theorem}[Universality theorem]
Let $\mathcal{K}$ be a compact set in the strip $1/2 < \sigma < 1$ with connected complement, and let $f(s)$ be a non-vanishing continuous function on $\mathcal{K}$ that is analytic in the interior of $\mathcal{K}$. Then, for any $\varepsilon > 0$,
\[
\liminf_{T \to \infty} \frac{1}{T} \meas \left\{\tau \in [T, 2T] : \sup_{s \in \mathcal{K}} |\zeta(s + i\tau) - f(s)| < \varepsilon \right\} > 0.
\]
\end{theorem}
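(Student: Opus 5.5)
We prove Voronin's universality theorem along the now-standard probabilistic route (Bagchi's method). The proof has three steps: a limit theorem in a function space, identification of the support of the limit measure, and a portmanteau argument.

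\textbf{Setup.} First enlarge $\mathcal{K}$ slightly. Choose a bounded simply connected domain $D$ whose closure lies in $1/2<\sigma<1$, with $\mathcal{K}\subset D$. Let $H(D)$ be the space of holomorphic functions on $D$ with the topology of uniform convergence on compacta; it is a complete separable metric space.

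\textbf{Step 1: a limit theorem in $H(D)$.} Define the random element
\[
\zeta(s,X)=\prod_p\bigl(1-X(p)p^{-s}\bigr)^{-1}.
\]
It converges almost surely in $H(D)$, because $\sum_p X(p)p^{-s}$ is a sum of independent mean-zero terms with $\sum_p p^{-2\sigma}<\infty$ on $D$. We show that the law of $\tau\mapsto\zeta(s+i\tau)$, with $\tau$ uniform on $[T,2T]$, converges weakly to the law of $\zeta(s,X)$.

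1. Approximate $\zeta(s+i\tau)$ by the smoothed Dirichlet polynomial $\sum_n n^{-s-i\tau}e^{-n/N}$. The mean-square error is controlled by a contour shift and the mean value theorem $\int_T^{2T}|\zeta(\sigma+it)|^2\,dt\ll T$ for $\sigma>1/2$. It is small on average uniformly on compacta of $D$ once $N\to\infty$ slowly.
2. For the finite Euler product over $p\le M$, apply Kronecker--Weyl equidistribution. Since the $\log p$ are linearly independent over $\mathbb{Q}$, the vector $(p^{-i\tau})_{p\le M}$ is equidistributed on the torus. Hence the truncated product converges in law to the corresponding truncation of $\zeta(s,X)$.
3. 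Combine the two by letting $M\to\infty$, using the standard approximation lemma for weak convergence.

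\textbf{Step 2: the support (main obstacle).} We show that the support of the law of $\zeta(s,X)$ contains every non-vanishing $g\in H(D)$. Equivalently, the support of $\log\zeta(s,X)\approx\sum_p X(p)p^{-s}$ is all of $H(D)$.

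We follow Bagchi. Work in the Hardy-type Hilbert space of functions holomorphic on a slightly larger disc containing $\overline D$. Apply the rearrangement lemma: if $\sum\|u_n\|^2<\infty$ and $\sum|\langle u_n,\phi\rangle|=\infty$ for every nonzero $\phi$, then the set of convergent sums $\sum a_nu_n$ with $|a_n|=1$ is dense.

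The key input is the nondivergence check. For a measure $\mu$ with compact support in the disc, we need: if $\sum_p|\int p^{-s}\,d\mu|<\infty$, then the Laplace transform of $\mu$ vanishes identically. This uses the prime number theorem together with a Bernstein-type growth lemma for entire functions of exponential type. Since $\mathcal{K}$ has connected complement, Mergelyan's theorem reduces the general case to polynomials. Because the $X(p)$ are independent and uniform on the circle, every neighbourhood of a dense set of sums has positive probability. This gives the support claim.

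\textbf{Step 3: conclusion.} By Mergelyan, take a polynomial $q$ with $e^q$ approximating $f$ within $\varepsilon/2$ on $\mathcal{K}$. Here $\log f$ exists because $f$ is non-vanishing and $\mathbb{C}\setminus\mathcal{K}$ is connected. Let $G$ be the open set
\[
G=\Bigl\{g\in H(D):\sup_{\mathcal{K}}|g-e^q|<\varepsilon/2\Bigr\}.
\]
By Step 2, $G$ has positive limit measure. By the portmanteau theorem for open sets,
\[
\liminf_{T\to\infty}\mathbb{P}_T\bigl(\zeta(\cdot+i\tau)\in G\bigr)\ge\mathbb{P}\bigl(\zeta(\cdot,X)\in G\bigr)>0.
\]
This is the stated inequality.
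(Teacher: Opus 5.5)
The paper does not prove this statement itself; it quotes it as the classical Voronin--Bagchi theorem. Its own analogue, the proof of Theorem~\ref{main1}, uses exactly your three-step Bagchi scheme: a limit theorem in holomorphic functions on a rectangle $\mathcal{R}$ whose closure lies in the strip, the full support of $\log\zeta(s,X)$ quoted from Endo's Proposition~2, then Mergelyan and the portmanteau theorem. So your architecture is right. The gap is in how you propose to prove the support in Step~2, where you pass to a Hardy space on a disc containing $\overline D$.

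For a general $\mathcal{K}$, say a vertical segment of length $10$ on $\sigma=3/4$, every such disc has diameter at least $10$ and must leave the strip. If it contains points with $\sigma<1/2$, the series $\sum_pX(p)p^{-s}$ diverges almost surely there. Then $\log\zeta(s,X)$ is not even a random element of that Hilbert space. If it contains no such points, it extends far into $\sigma>1$, and your nondivergence lemma is false. Take $\mu=\delta_{s_0}$, i.e.\ point evaluation, at some $s_0$ in the disc with $\Re s_0>1$. Then $\sum_p|p^{-s_0}|<\infty$, yet the Laplace transform $e^{-s_0z}$ is not zero. The density you want also genuinely fails in that space: $|\sum_pa_pp^{-s_0}|\le\sum_pp^{-\Re s_0}$ for all $|a_p|=1$, and point evaluation is continuous, so these sums cannot be dense. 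The prime number theorem/Bernstein argument only forces divergence when $\mu$ is supported in $\sigma\le 1-\delta$. That is why the disc version of the argument only handles $\mathcal{K}$ lying in a disc inside the strip, which is Voronin's original setting.

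The fix is never to leave $D$. Take $D$ a rectangle or smooth Jordan domain with $\mathcal{K}\subset D$ and $\overline D\subset\{1/2<\sigma<1\}$. Then either use the Hardy space $H^2(D)$ of that domain, whose functionals are integrals against measures on $\partial D$. Or work directly in the Fr\'echet space $H(D)$, whose continuous linear functionals are $g\mapsto\int g\,d\mu$ with $\mu$ compactly supported in $D$, together with the corresponding version of the rearrangement lemma. In either case every relevant $\mu$ lives in $\sigma_1\le\Re s\le 1-\delta$, and your nondivergence step goes through. This is the form of the support theorem the paper invokes for $\mathcal{H}(\mathcal{R})$.

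A smaller point in Step~1: the smoothed sum $\sum_n n^{-s-i\tau}e^{-n/N}$ is not a finite Euler product, so your items~1 and~2 do not connect as written. The standard link is to view $\sum_nX(n)n^{-s}e^{-n/N}$ as a continuous function on $\Omega$. Then use that $(p^{-i\tau})_p$ is equidistributed in $\Omega$, which follows from your finite-dimensional Kronecker--Weyl statement.
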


Roughly speaking, any non-vanishing holomorphic function can be approximated by the Riemann zeta-function with a suitable vertical shift $i\tau$, and the set of such shifts has positive lower density. 

Bagchi~\cite{Ba} proved universality by using probabilistic method. 
Nowadays, universality has been generalized and extended to a wide variety of zeta-functions and $L$-functions (see, for example,~\cite{Ma15}). 
One such generalization is to replace $i\tau$ by $i\gamma(\tau)$, where $\gamma(\tau)$ is a real-valued function. 
In other words, for real-valued functions $\gamma_j(\tau)$, we are interested in the question whether the universality property for some $L$-function
\begin{equation}
\liminf_{T \to \infty} \frac{1}{T}\meas\left\{\tau \in [T, 2T] : \max_{1 \le j \le r} \sup_{s \in K} |L(s + i\gamma_j(\tau)) - f(s)| < \varepsilon \right\} > 0
\label{joint universality}
\end{equation}
holds for any $\varepsilon > 0$ or not. 

The universality property of type~(\ref{joint universality}) was first established by Nakamura~\cite{Na09}. 
It was the first result extending joint universality for Dirichlet $L$-functions to the shifts $\gamma_j(\tau) = a_j \tau$, where $a_1, \dots, a_r$ are real algebraic numbers linearly independent over $\mathbb{Q}$.
Moreover, when $r = 2$, further results were obtained in~\cite{Na09}, \cite{Pa09}, and \cite{Pa16}. 
Furthermore, Pa\'{n}kowski~\cite{Pa18} proved that the universality property~(\ref{joint universality}) holds when $\gamma_j(\tau) = \alpha_j \tau^{a_j} (\log \tau)^{b_j}$, where $\alpha_j$, $a_j$, and $b_j$ satisfy certain assumptions. 
Laurin\v{c}ikas and \v{S}iau\v{c}i\={u}nas~\cite{LS} obtained an affirmative answer to the above question for shifts $\gamma_j(\tau) = t_{\tau}^{\alpha_j}$, where $t_\tau$ is the Gram function and $\alpha_1, \dots, \alpha_r$ are distinct fixed positive numbers. 

Besides explicitly defined shifts, in \cite{LMS19}, \cite{La21Rama}, \cite{La21Math}, and \cite{La22}, several variants of universality theorems for the Riemann zeta-function were established for axiomatically defined shifts such as polynomials. 
Furthermore, the author~\cite{Na} introduced the class $\mathcal{F}$, and proved joint universality for the Riemann zeta-function with more general shifts, including exponential shifts and shifts with faster growth.

\subsection{Main theorems}

In this paper, we first investigate how discrepancy estimates in the Bohr--Jessen limit theorem change when the classical shift $+it$ is replaced by a general shift $+i\gamma(t)$.

To state our first main theorem, we introduce the class $\mathcal{F}$ introduced in \cite{Na}.

\begin{definition}
Let $\mathcal{F}$ denote the class of functions $\gamma$ satisfying the following assumptions.

\begin{enumerate}

\item[(F1)] There exist positive real numbers $T_0>0$ and $T_1>0$ such that $\gamma:[T_0,\infty)\to[T_1,\infty)$ is strictly increasing. Moreover, $\gamma(T)\to\infty$ as $T\to\infty$.

\item[(F2)] The function $\gamma$ is continuously differentiable, and $\gamma'$ is monotonic on $[T_0,\infty)$.

\item[(F3)] There exists a positive constant $\alpha$ such that $\alpha \gamma(\tau)\le \tau\gamma'(\tau)$
for all $\tau\ge T_0$.

\end{enumerate}

\end{definition}
For the discrepancy problem, we require one additional growth condition.
Accordingly, we introduce the subclass $\mathcal{F}'$. 
A function $\gamma \in \mathcal{F}$ belongs to $\mathcal{F}'$ if $\gamma$ satisfies the following condition:
\begin{enumerate}
    \item[(F4)] For any $\varepsilon > 0$ and $c \ge 1$,
    \[
    \log \gamma(cT) \ll_{\varepsilon} \gamma(T)^{\varepsilon}.
    \]
\end{enumerate}

Typical examples of functions in $\mathcal{F}$ include
$t^a(\log t)^b$ $(a>0,\ b\in\mathbb{R})$,
exponential functions, the Gamma function, and the $n$th iterated exponential function $(n\in\mathbb{N})$.
All of these examples also belong to $\mathcal{F}'$.
On the other hand, Proposition~\ref{gamma c} shows that the inclusion $\mathcal{F}' \subsetneq \mathcal{F}$ is proper.

Our first main theorem is the following.

\begin{theorem}\label{main general}
Let $1/2<\sigma<1$ and $\gamma\in\mathcal{F}'$. Define
\[
D_{\sigma,\gamma}(T)
:=
\sup_{\mathcal R}
\left|
\mathbb{P}_T(\log\zeta(\sigma+i\gamma(t))\in\mathcal R)
- \mathbb{P}(\log\zeta(\sigma,X)\in\mathcal R)
\right|,
\]
where $\mathcal R$ ranges over all rectangles in $\mathbb{C}$ whose sides are parallel to the coordinate axes.
Then
\[
D_{\sigma,\gamma}(T)
\ll
\frac1{(\log\gamma(T))^\sigma}
\]
for sufficiently large $T > 0$.
\end{theorem}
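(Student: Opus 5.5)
The strategy is to reduce the problem to the classical Bohr--Jessen discrepancy argument, in the spirit of Lamzouri--Lester--Radziwi\l\l, by comparing characteristic functions and applying a Beurling--Selberg / Esseen-type smoothing inequality in two dimensions. The probability $\mathbb{P}_T$ is taken over $t\in[T,2T]$, while the function is evaluated at height $\gamma(t)$. So the first step changes variables $u=\gamma(t)$. By (F1)--(F2), $\gamma$ is a $C^1$ increasing bijection onto $[\gamma(T),\gamma(2T)]$, and
\[
\frac1T\int_T^{2T}\Phi(\gamma(t))\,dt=\frac1T\int_{\gamma(T)}^{\gamma(2T)}\Phi(u)\,\frac{du}{\gamma'(\gamma^{-1}(u))}.
\]
Since $\gamma'$ is monotonic, the weight $w(u)=1/(T\gamma'(\gamma^{-1}(u)))$ is monotonic and has total mass $1$. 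Condition (F3) gives $\gamma(2T)\ge 2^{\alpha}\gamma(T)$, so the $u$-interval has length $\gg\gamma(T)$.

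Next, I would replace $\log\zeta(\sigma+iu)$ by a short Dirichlet polynomial $P_Y(u)=\sum_{p\le Y}p^{-\sigma-iu}$ (plus prime squares if needed), with $Y=(\log\gamma(T))^{A}$. The error must be small outside an exceptional set of small weighted measure. On the unweighted interval $[V,2V]$ this is the standard zero-density/mean-value argument, and the exceptional measure is $\ll V^{1-\delta}$. The large interval $[\gamma(T),\gamma(2T)]$ splits into dyadic blocks. On each block, $w$ is monotone and bounded by its value at an endpoint. Summing over the $\ll\log(\gamma(2T)/\gamma(T))$ blocks weighted by the maximum of $w$ then gives a weighted exceptional measure of $\ll \gamma(T)^{-\delta}\cdot$(number of blocks). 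Condition (F4) applied with $c=2$ is what controls this factor: $\log\gamma(2T)\ll_\varepsilon\gamma(T)^{\varepsilon}$, so the factor is negligible against any power saving. A weighted version of the same argument also handles the exact sup over $[\gamma(T),\gamma(2T)]$ when we need moment bounds.

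The core step is computing the moments and the characteristic function. For $|\xi|\le c\log\gamma(T)$ I would show
\[
\int e^{i\Re(\bar\xi P_Y(u))}w(u)\,du=\mathbb{E}\,e^{i\Re(\bar\xi P_Y(X))}+O(\gamma(T)^{-\delta}).
\]
The method is to expand the exponential in a Taylor series truncated at degree $N\asymp\log\gamma(T)/\log Y$. The integral then reduces to weighted integrals of $(m/n)^{iu}$ with $m\ne n$ and $m,n\le Y^{N}\le\gamma(T)^{1/2}$. Because $w$ is monotone, the second mean value theorem bounds each such oscillatory integral by $\ll \max w\cdot|\log(m/n)|^{-1}\ll \gamma(T)^{-1}Y^{N}\cdot(\dots)$, which uses $\max w\ll 1/\gamma(T)$ up to the (F4)-controlled factor. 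The diagonal terms reproduce the random model exactly.

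Finally, I would feed this into the two-dimensional Esseen/Beurling--Selberg inequality with cutoff $L\asymp\log\gamma(T)$. The random-model side has the known rapid decay of the characteristic function together with a bound for the difference between $P_Y(X)$ and $\log\zeta(\sigma,X)$. The discrepancy is then $\ll 1/L+$(tail terms). This gives $1/\log\gamma(T)$, but the stated bound $(\log\gamma(T))^{-\sigma}$ has to come from the cost of truncating at $Y$ versus the admissible range of $\xi$, exactly as in the LLR-type analysis, where balancing $\xi$ against $Y^{1-2\sigma}$ produces exponent $\sigma$. I expect the main obstacle to be the weighted mean-value and characteristic-function estimates over the possibly very long interval $[\gamma(T),\gamma(2T)]$. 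In particular, one has to make sure that the non-uniform weight $w$, whose maximum may exceed the average $1/(\gamma(2T)-\gamma(T))$ substantially, does not destroy the power savings. Handling this via monotonicity and (F4) is the first thing I would try.
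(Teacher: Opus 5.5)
\paragraph{Main gap: the characteristic-function step.}
Your change of variables with the second mean value theorem is sound. It is equivalent to the first-derivative-test bound in Lemma~\ref{mean general}: by (F2), $\max w=1/(T\tilde\gamma(T))$, and (F3) already gives $T\tilde\gamma(T)\ge\frac{\alpha}{2}\gamma(T)$, so (F4) is not needed there.

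The genuine gap is in your claim that the characteristic functions agree for all $|\xi|\le c\log\gamma(T)$. Truncating the Taylor series of $e^{i\Re(\bar\xi P_Y)}$ at degree $N\asymp\log\gamma(T)/\log Y$ cannot reach that range:
\begin{itemize}
\item $P_Y$ is typically of size $\asymp 1$, so $|\xi P_Y|^N/N!$ is enormous on a set of positive measure.
\item Even with the moment bound of Lemma~\ref{higher P general}, the tail $\sum_{k\ge N}|\xi|^k M_k/k!$ (with $M_k$ the $k$-th absolute moment of $P_Y$) behaves like $\sum_{k\ge N}\bigl(C|\xi|/(k\log k)^{\sigma}\bigr)^k$. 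This is small only when $|\xi|\ll(N\log N)^{\sigma}\asymp(\log\gamma(T))^{\sigma}$.
\item On the $t$-side, moments of order $k>N$ are not available at all, since they require $Y^k\le T\tilde\gamma(T)$.
\end{itemize}

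What is missing is the large-deviation truncation of Proposition~\ref{prop1 general}. One integrates only over $A_T=\{|P_Y(\sigma+i\gamma(t))|\le V\}$, controls the complement by Lemma~\ref{large deviation general}, and uses $|P_Y|\le V$ to bound the Taylor tail. This gives the comparison only for $|\xi|\le d_1(V\log V)^{\sigma/(1-\sigma)}$.

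The choice $V\asymp(\log\gamma(T))^{1-\sigma}/\log\log\gamma(T)$ is forced by keeping the off-diagonal term $(T\tilde\gamma(T))^{-1}\bigl((V\log V)^{\sigma/(1-\sigma)}Y\bigr)^{Z}$, where $Z\asymp V^{1/(1-\sigma)}(\log V)^{\sigma/(1-\sigma)}$, below a negative power of $\gamma(T)$. With that $V$, the range becomes $|\xi|\ll(\log\gamma(T))^{\sigma}$. Taking $L=c_3(\log\gamma(T))^{\sigma}$ in the Beurling--Selberg approximation then produces the $L^{-1}$ term, which is the \emph{only} source of the exponent $\sigma$.

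Your explanation via balancing $\xi$ against $Y^{1-2\sigma}$ is incorrect. With $Y$ a large power of $\log\gamma(T)$, replacing $\log\zeta(\sigma,X)$ by $P_Y(\sigma,X)$ costs only $\ll L\,Y^{1/2-\sigma}$ (the term $L/Y^{\sigma-1/2}$ in $E_1$), which is negligible. So step 3 as written overclaims (it would give $1/\log\gamma(T)$), and your final step does not actually derive the stated bound.

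\paragraph{Secondary issue: step 2.}
Lemma~\ref{appro} has error $y^{(1/2-\sigma_0)/2}(\log u)^3$ at height $u$. On $[\gamma(T),\gamma(2T)]$ it therefore needs $y\ge(\log\gamma(2T))^{C}$. For $\gamma\in\mathcal{F}'$ such as $\exp\exp\exp t$, this is not bounded by any power of $\log\gamma(T)$. So you cannot pass directly to $Y=(\log\gamma(T))^{A}$ with a power-saving exceptional set.

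The paper proceeds in two steps:
\begin{itemize}
\item It first applies Lemma~\ref{appro} with $\eta=(\log\gamma(2T))^{14/(\sigma_0-1/2)}$. This is where (F4) is genuinely used: it makes $\eta$ and the factor $(\log\gamma(5T/2))^5$ from the zero-density sum $\ll\gamma(T)^{\varepsilon}$ (Lemma~\ref{measure error}), and it bounds $\eta^{2-2\sigma}/(T\tilde\gamma(T))$.
\item It then replaces $P_\eta$ by $P_Y$, with $Y=(\log\gamma(T))^{12/(2\sigma-1)}$, using a mean-square bound plus Chebyshev (Lemma~\ref{new}). This leaves an exceptional set of relative measure only $(\log\gamma(T))^{-8}$, which is harmless against the target bound.
\end{itemize}
With these two repairs (the $V$-truncation with the restricted range $|\xi|\ll(\log\gamma(T))^{\sigma}$, and the two-step approximation), your outline coincides with the paper's proof.
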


For example, when $\gamma(t)=e^t$, Theorem~\ref{main general} yields
\[
D_{\sigma,\gamma}(T)
\ll
T^{-\sigma}.
\]

By \cite[Proposition~1.2]{LLR}, for fixed $\varepsilon > 0$, $D_{\sigma, t}(T) = \Omega(T^{1-2\sigma -\varepsilon}) $ holds. 
Comparing this with the classical case $\gamma(t)=t$, we see that $\mathbb P_T(\log\zeta(\sigma+ie^t)\in\mathcal R)$
converges to $\mathbb P(\log\zeta(\sigma,X)\in\mathcal R)$
faster than $\mathbb P_T(\log\zeta(\sigma+it)\in\mathcal R)$. 
Theorem 1.4 provides a unified discrepancy estimate for the Bohr--Jessen limit theorem under a broad class of shifts, encompassing polynomial, exponential, and many other naturally occurring shifts.

On the other hand, all known results are restricted to shifts whose growth is either of polynomial type or faster than any polynomial. 
In particular, they do not cover logarithmic shifts such as
\[
\gamma(\tau)=(\log\tau)^\alpha.
\]
Since logarithmic-power shifts do not belong to the class
$\mathcal F$, Theorem~\ref{main general} cannot be applied to them.
Our next result shows that universality nevertheless holds for this class of shifts. 
To describe the second main result, we introduce some notations. 
We define the number of zeros of the Riemann zeta-function by
\[
N(\sigma, T) := \# \{\rho \in \mathbb{C} : \zeta(\rho) = 0,\ \Re(\rho) > \sigma,\ 0 < \Im(\rho) < T \}.
\]
Suppose that
\[
N(\sigma,T) \ll_{\varepsilon} T^{\Phi(\sigma)+\varepsilon},
\]
where $\Phi(\sigma)$ is a nonnegative non-increasing function for $\sigma>1/2$. For $\alpha>1$, define
\[
x_{\Phi}(\alpha)
:=
\inf\left\{x>\frac12:\Phi(x)<1-\frac1\alpha\right\}.
\]

Note that, by combining Ingham's result~\cite{In} and the result of Guth--Maynard~\cite{GM}, we can take
\[
 x_\Phi(\alpha) = \min\left\{\frac{\alpha+2}{2\alpha+1}, \frac{17}{30}+\frac{13}{30\alpha}\right\},
\]
and under the density hypothesis, we can take
\[
x_\Phi(\alpha) = \frac12+\frac{1}{2\alpha}.
\]
Then, we establish a joint universality theorem with logarithmic-power shifts.

\begin{theorem}\label{main1}
Assume that $1 < \alpha_1 < \dots < \alpha_r$.
Let $\mathcal{K}$ be a compact set in the strip $x_\Phi(\alpha_1) < \sigma < 1$ with connected complement, and let $f_1(s), \dots, f_r(s)$ be continuous functions on $\mathcal{K}$ that are analytic in the interior of $\mathcal{K}$. Then, for any $\varepsilon > 0$,
\[
\liminf_{T \to \infty} \frac{1}{T} \meas \left\{\tau \in [T, 2T] : \max_{1 \le j \le r} \sup_{s \in \mathcal{K}} |\log\zeta(s + i(\log \tau)^{\alpha_j}) - f_j(s)| < \varepsilon \right\} > 0.
\]
In particular, under the Riemann hypothesis, the above universality result can be extended to $1/2 < \sigma < 1$. 
\end{theorem}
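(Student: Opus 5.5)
The plan is to reduce Theorem~\ref{main1} to a joint limit theorem in the style of Bagchi. The key variable is $v=\log\tau$. As $\tau$ runs over $[T,2T]$, $v$ runs over $[L,L+\log 2]$ with $L=\log T$, and $d\tau=e^v\,dv$ has density between $T$ and $2T$ on this interval. So the relative $\tau$-measure of a set is comparable, up to a factor $2$, to the normalized $v$-measure. Positive lower density in $\tau$ therefore follows from positive lower density in $v$. The shifts become $u_j=v^{\alpha_j}$, and a single $u_j$ ranges over an interval of length $H_j\asymp L^{\alpha_j-1}$ near height $U_j=L^{\alpha_j}$, so $H_j\asymp U_j^{1-1/\alpha_j}$.

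\textbf{Step 1 (equidistribution on the torus).} I will show that for $v$ uniform on $[L,L+\log 2]$, the family $\bigl(p^{-iv^{\alpha_j}}\bigr)_{p\le N,\ 1\le j\le r}$ becomes equidistributed in $\mathbb{T}^{r\pi(N)}$ as $L\to\infty$, for every fixed $N$. By Weyl's criterion it suffices to show
\[
\int_L^{L+\log 2}e^{i\varphi(v)}\,dv\to 0,\qquad \varphi(v)=\sum_{j=1}^r c_jv^{\alpha_j},\qquad c_j=\sum_{p\le N}k_{j,p}\log p .
\]
If some $k_{j,p}\neq0$, then some $c_j\ne0$, because the numbers $\log p$ are linearly independent over $\mathbb{Q}$. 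Let $j_0$ be the largest index with $c_{j_0}\neq0$. Since the $\alpha_j$ are distinct and exceed $1$, we have $|\varphi'(v)|\gg L^{\alpha_{j_0}-1}\to\infty$, and $\varphi'$ is monotone for large $v$. The first derivative test then gives the decay. This yields weak convergence of the vector of truncated sums $\bigl(\sum_{p\le N}-\log(1-p^{-s-iv^{\alpha_j}})\bigr)_{j\le r}$ in $H(D)^r$ to $\bigl(\log\zeta_N(s,X_j)\bigr)_{j\le r}$, where $X_1,\dots,X_r$ are \emph{independent} random multiplicative sequences.

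\textbf{Step 2 (approximating $\log\zeta$ on short intervals; main obstacle).} Here I must show that for $s$ in a neighbourhood of $\mathcal{K}$ with $\Re s\ge\sigma_0>x_\Phi(\alpha_1)$, the function $\log\zeta(s+iv^{\alpha_j})$ is close to the truncated Euler sum for all $v$ outside a set of small measure. First I remove bad $v$. Zeros $\rho$ with $\Re\rho>\sigma_0-\delta$ and height up to $Y=(\log 2T)^{\alpha_j}$ number $\ll Y^{\Phi(\sigma_0-\delta)+\varepsilon}$. Each such zero spoils a $u$-interval of length $O(\log Y)$, or of length $O(1)$ after a little care. This is negligible against $H_j\asymp Y^{1-1/\alpha_j}$ exactly when $\Phi(\sigma_0-\delta)<1-1/\alpha_j$. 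Since $\alpha_1$ is smallest, this holds for every $j$ by the definition of $x_\Phi(\alpha_1)$. Under RH, $\Phi=0$ on $\sigma>1/2$ and the condition is automatic. Passing from $u$ to $v$ costs only the bounded factor $du/dv=\alpha_jv^{\alpha_j-1}$, which varies by a bounded ratio on the interval. On the good set I use a standard zero-free-region approximation (Granville--Soundararajan type), $\log\zeta(s+iu)=\sum_{n\le X}\Lambda(n)n^{-s-iu}/\log n+o(1)$ with $X=Y^{\eta}$ for a small $\eta>0$. Then I bound the tail over $N<p\le X$, together with the prime powers, in mean square over the $u$-interval by Montgomery--Vaughan:
\[
\int |\textstyle\sum a_pp^{-iu}|^2\,du\ll (H_j+X)\sum|a_p|^2 .
\]
Since $X\le H_j$ for $\eta$ small and $\sum_{p>N}p^{-2\sigma_0}\to0$, the tail is small outside a set of small measure, and a union bound over $j$ handles the joint statement. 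The delicate point is to keep $X$ below $H_j$ while still making the zero-free approximation valid; this is where I expect the real work.

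\textbf{Step 3 (conclusion).} Steps 1 and 2 give that $\bigl(\log\zeta(s+iv^{\alpha_j})\bigr)_j$ converges in law in $H(D)^r$ to $\bigl(\log\zeta(s,X_j)\bigr)_j$. By Bagchi's argument, the support of each $\log\zeta(s,X_j)=\sum_p-\log(1-X_j(p)p^{-s})$ is all of $H(D)$. Here I use Mergelyan's theorem and the fact that no non-vanishing condition is needed for $\log$. Independence of the $X_j$ makes the joint support all of $H(D)^r$. The portmanteau theorem applied to the open set $\{\max_j\sup_{\mathcal{K}}|g_j-f_j|<\varepsilon\}$ then gives positive lower density in $v$, and hence in $\tau$.
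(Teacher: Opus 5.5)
Your outline follows the paper's proof of Theorem~\ref{main1}. The shared steps are: a zero-density exclusion of neighbourhoods of zeros with large real part (Lemma~\ref{meas log}, where the threshold $x_\Phi(\alpha_1)$ enters); a Granville--Soundararajan/Endo approximation on the good set (Lemma~\ref{appro}); a mean-square bound for the tail; joint equidistribution of $(p^{-i(\log\tau)^{\alpha_j}})$ by the first derivative test at the largest index $j_0$ with $c_{j_0}\neq0$ (Lemma~\ref{Prob joint}); and finally support, independence, Mergelyan and portmanteau. Your substitution $v=\log\tau$, $u=v^{\alpha_j}$, followed by Montgomery--Vaughan in $u$, is a tidy replacement for the paper's Lemma~\ref{mean1}. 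It is legitimate because $du/dv$ is essentially constant on $[L,L+\log 2]$, and it does not change the structure.

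One step, however, fails as written. The Granville--Soundararajan approximation by a polynomial of length $X$ needs the whole rectangle $\{\Re z>\sigma_0-\delta,\ |\Im z-u|\le X+2\}$ to be free of zeros. Compare the intervals $(\gamma-(y+3),\gamma+(y+3))$ removed in Lemma~\ref{appro}. So with $X=Y^\eta$ you cannot exclude only intervals of length $O(\log Y)$ or $O(1)$ around each zero; the exceptional $u$-set has measure $\ll Y^{\Phi(\sigma_0-\delta)+\varepsilon}X$.

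This is repairable within your framework in either of two ways:
\begin{itemize}
\item Keep $X=Y^\eta$. The exceptional measure is still $o(H_j)$ provided $\eta+\varepsilon<1-1/\alpha_1-\Phi(\sigma_0-\delta)$.
\item Take $X$ to be a large power of $\log Y$, as the paper does with polynomial length $(\log\log T)^{8/(\sigma_0-1/2)}$. The approximation error is $\ll(\log Y)^3X^{-c}$ with some $c=c(\delta)>0$, so this $X$ suffices and the exclusions have only polylogarithmic length.
\end{itemize}
So the binding constraint is (number of zeros)$\times$(exclusion length)$=o(H_j)$, not $X\le H_j$, which is automatic for such $X$. You should also say how you pass from mean-square bounds at each fixed $s$ to $\sup_{s\in C}$ over compact sets. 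The paper does this with Cauchy's formula on $\partial\mathcal{R}$ in Lemma~\ref{appro X}.
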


Note that our method does not allow us to take $\alpha_1 = 1$, and therefore we are unable to determine whether universality holds for the shift $\gamma(\tau)=\log\tau$ even if we assume the Riemann hypothesis.

Motivated by Theorem~\ref{main1}, we investigate the value distribution of
$\log\zeta(\sigma+i(\log t)^r)$ through the Bohr--Jessen type limit theorem. 
By similar arguments, we can show that
$\mathbb{P}_T(\log\zeta(\sigma + i(\log t)^r) \in \mathcal{R})$
converges to
$\mathbb{P}(\log\zeta(\sigma, X) \in \mathcal{R})$
as $T \to \infty$ for $r > 1$. 
We next establish a more precise limit theorem. 
Our third main theorem is as follows.

\begin{theorem}\label{main2}
Let $r > 1$ and $x_\Phi(r) < \sigma < 1$.
Put
\[
D_{\sigma, r}(T)
:=
\sup_{\mathcal{R}}
\left|
\mathbb{P}_T(\log\zeta(\sigma + i(\log t)^r) \in \mathcal{R})
-
\mathbb{P}(\log\zeta(\sigma, X) \in \mathcal{R})
\right|,
\]
where $\mathcal{R}$ runs through all rectangles in $\mathbb{C}$ whose sides are parallel to the coordinate axes.
Then, we have
\[
D_{\sigma ,r}(T)
\ll
\frac{1}{((r-1)\log\log T)^\sigma}
\]
for sufficiently large $T  > 0$.
In particular, assuming the Riemann hypothesis, we can prove the above estimate for $1/2 < \sigma < 1$. 
\end{theorem}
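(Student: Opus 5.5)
The plan is the standard Fourier-analytic route to discrepancy bounds, as in Lamzouri--Lester--Radziwi\l\l, adapted to the shift $(\log t)^r$. I change variables $u=(\log t)^r$, so that $t=e^{u^{1/r}}$ and $dt = \tfrac1r u^{1/r-1}e^{u^{1/r}}\,du$. For $t\in[T,2T]$ the new variable $u$ runs over an interval $[U_1,U_2]$ with $U_1=(\log T)^r$ and $U_2-U_1\asymp r(\log T)^{r-1}$. Thus $\mathbb{P}_T(\log\zeta(\sigma+i(\log t)^r)\in\mathcal R)$ becomes a weighted average of $\mathbf 1_{\mathcal R}(\log\zeta(\sigma+iu))$ over a short interval of length $H\asymp U_1^{1-1/r}$, with a smooth, monotone, essentially exponential weight. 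The key point is that $\log H\asymp (r-1)\log\log T$. This is exactly the quantity in the claimed bound, the analogue of $\log\gamma(T)$ in Theorem~\ref{main general}.

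Next comes a short-interval approximation. I will replace $\log\zeta(\sigma+iu)$ by a short Dirichlet polynomial $P_y(\sigma+iu)=\sum_{p\le y}p^{-\sigma-iu}$ (plus prime squares), with $y=H^{\delta}$ for small $\delta>0$. This should hold for all $u\in[U_1,U_2]$ outside an exceptional set of relative measure $\ll H^{-c}$. The exceptional set is where a zero $\rho$ with $\Re\rho>\sigma-\epsilon$ lies near height $u$. By the zero density hypothesis, the number of such zeros with $\Im\rho\le 2U_2$ is $\ll U_2^{\Phi(\sigma-\epsilon)+\epsilon}$. Each zero removes a set of measure $O(\log U_2)$, so their total contribution is negligible relative to $H\asymp U_2^{1-1/r}$ precisely when $\Phi(\sigma)<1-1/r$, i.e.\ $\sigma>x_\Phi(r)$. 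Off this set, the standard contour argument (Granville--Soundararajan type) gives $\log\zeta(\sigma+iu)=P_y(\sigma+iu)+O(y^{-c})$. The weight $e^{u^{1/r}}$ varies only by a bounded factor across the interval, because $u^{1/r}$ increases by only $\log 2$ there. Bounded weights can be handled by partial summation, or simply absorbed into mean value estimates, since they are smooth. Under RH there are no exceptional zeros, which gives the statement for all $1/2<\sigma<1$.

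The main step is comparing characteristic functions. For $|\xi|,|\eta|\le L$, I compare
\[
\int e^{i(\xi\Re P_y+\eta\Im P_y)}\,d\mu_T \quad\text{with}\quad \mathbb E\, e^{i(\xi\Re P_y(\sigma,X)+\eta\Im P_y(\sigma,X))}.
\]
I Taylor expand the exponential to order $N$ and compute moments by the mean value theorem for Dirichlet polynomials over an interval of length $H$. The off-diagonal terms are $\ll y^{2N}/H$, and the diagonal terms reproduce the random model. Taking $y^{2N}\le H^{1/2}$ gives agreement up to $O(H^{-c})$ for $L\asymp(\log H)^{\sigma}$, as in LLR. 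The decay of the random characteristic function, $\exp(-c|\xi|^{1/\sigma}/\log|\xi|)$-type, is taken from LLR. The Beurling--Selberg / Esseen-type inequality for rectangles then gives discrepancy $\ll 1/L+\text{errors}\ll(\log H)^{-\sigma}\asymp((r-1)\log\log T)^{-\sigma}$.

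I expect the main obstacle to be this moment-matching step. It works only on an interval of length $H$ that is small relative to the height $U_1$, so the admissible range of $y$ and $N$ is governed by $\log H$ rather than $\log U_1$. The parameters must be balanced so that $L$ reaches $(\log H)^{\sigma}$ while the truncation and exceptional-set errors stay $O((\log H)^{-\sigma})$.
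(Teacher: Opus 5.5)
Your route is essentially the paper's. The substitution $u=(\log t)^r$ repackages the first-derivative-test computation with $T\tilde\gamma(T)=\tfrac r2(\log 2T)^{r-1}\asymp H$, and the zero-density exceptional set (needing $\sigma>x_\Phi(r)$), moment matching and Beurling--Selberg steps are the same. However, the parameters you fix make the central step fail. In the characteristic-function comparison the phase $x=\xi\Re P_y+\eta\Im P_y$ is typically of size $\asymp|\xi|+|\eta|$, up to $L$, because $P_y(\sigma,X)$ has variance $\asymp 1$. So the degree-$N$ Taylor polynomial of $e^{ix}$ is useless unless $N$ grows with $L$. Quantitatively, the remainder $(2L)^N\,\mathbb{E}|P_y|^N/N!$, with moments of size up to $(CN^{1-\sigma}/(\log N)^{\sigma})^{N}$ (as in Lemma~\ref{higher P general}), is small only once $N\log N\gg L^{1/\sigma}$. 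For $L\asymp(\log H)^{\sigma}$ this means $N\gg \log H/\log\log H$. Your off-diagonal condition $y^{2N}\le H^{1/2}$ then forces $\log y\ll\log\log H$, so $y$ can be at most a fixed power of $\log H\asymp\log\log T$. With $y=H^{\delta}$ you only get $N\le 1/(4\delta)$, so $L$ stays bounded and the Esseen/Beurling--Selberg step yields no decay at all. Separately, the claimed $O(H^{-c})$ agreement is too optimistic: the truncation error is only about $\exp(-c\log H/\log\log H)$, though that is harmless.

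The repair is what the paper does. Run the Fourier step with $P_Y$, where $Y=(\log\log T)^{A}$ and $A=A(\sigma)$ (the paper takes $A=14/(\sigma_0-\frac12)$). Take $L=c((r-1)\log\log T)^{\sigma}$ with $c$ small in terms of $A$, so that $Y^{2N}\ll(\log T)^{(r-1)/2}$. There are two ways to reach $P_Y$.
\begin{itemize}
\item Apply Lemma~\ref{appro} directly with $y=Y$. Its error $\ll y^{-(\sigma_0-1/2)/2}(\log U)^3$ only requires $y$ to be a large power of $\log U\asymp\log\log T$. Each zero then removes a window of width $\asymp Y$, not $O(\log U_2)$ as you wrote, giving relative exceptional measure $\ll Y(\log T)^{1-r+r\Phi(\sigma_0)+\varepsilon}$.
\item Approximate by a long polynomial first and pass to $P_Y$ by a mean-square bound and Chebyshev, as in Lemma~\ref{new}.
\end{itemize}
Either way, the approximation error is now only a negative power of $\log\log T$ rather than $y^{-c}=H^{-c\delta}$. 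So you also need two steps your sketch omits. First, restrict to rectangles inside $[-M,M]\times i[-M,M]$ with $M=\log\log\log T$, using high-moment bounds. Second, absorb the $\varepsilon$-perturbation through $\mathcal R^{\pm\varepsilon}$ and the bounded density of $\log\zeta(\sigma,X)$. These cost $O(\varepsilon M+(\log\log T)^{-2})$, which is negligible against $((r-1)\log\log T)^{-\sigma}$.
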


The discrepancy estimate obtained above becomes weaker as $r \to 1$, which illustrates the difficulty of the case $r=1$. 
Moreover, the difficulty of the case $\log t$ will be discussed in Section~\ref{conclusion}. 

\section{Preliminaries for Probability settings} \label{Pre}

We write $\mathcal{B}(T)$ for the set of all Borel subsets of $T$ which is a topological space. 
Let $S^1 = \{ s \in \mathbb{C} : |s| = 1 \}$. 
For each prime $p$, let $S_p = S^1$ and  $\Omega = \prod_p S_p$. 
Define $\Omega^r = \Omega_1 \times \dots \times\Omega_r$, where $\Omega_j = \Omega$ for all $j = 1, \dots, r$. 
Then there exists the probability Haar measure $\mathbf{m}^r$ on $(\Omega^r, \mathcal{B}(\Omega^r))$. 
We note that $\mathbf{m}^r$ is written by $\mathbf{m}^r = \otimes_{j=1}^{r} \mathbf{m}_j$, where $\mathbf{m}_j = \otimes_{p} \mathbf{m}_p$ is the probability Haar measure on $(\Omega_j, \mathcal{B}(\Omega_j))$ and $\mathbf{m}_p$ is the probability Haar measure on $(S_p, \mathcal{B}(S_p))$. 
The expectation with respect to $\mathbf m^r$ is defined by $\mathbb{E}^{\mathbf{m}^r}[f] = \int_{\Omega^r} f \, d\mathbf{m}^r$, and for simplicity, we write $\mathbb{E}[f] := \mathbb{E}^{\mathbf{m}}[f]$. 

Let $X(p)$ be the projection of $X\in \Omega$ to the coordinate space $S_p$. 
For $X \in \Omega$, define $X(1) :=1$, 
\[
X(n) := \prod_p X(p)^{\nu(n ; p)}, 
\]
where $\nu(n ; p)$ is the exponent of the prime $p$ in the prime factorization of $n$. 
Similarly, for the projection $X_j \in \Omega_j$, we define $X_j(n)$. 
We define a random Euler product
\begin{align*}
\zeta(s, X) &= \prod_{p} \left(1 - \frac{X(p)}{p^s} \right)^{-1} \\
              &= \sum_{n=1}^{\infty} \frac{X(n)}{n^s}
\end{align*}
and the random element
\[
\underline{\zeta}(s, X) = (\zeta(s, X_1), \dots, \zeta(s, X_r)).
\]

\section{Dirichlet polynomials with general shifts} \label{section3}

In this section, we establish the following proposition and lemmas, which will be used in the proof of Theorem~\ref{main general}.
Define
\[
P_Y(\sigma + it) = \sum_{2 \le n \le Y} \frac{\Lambda(n)}{n^{\sigma + it} \log n}, \quad P_Y(\sigma, X) = \sum_{2 \le n \le Y} \frac{\Lambda(n) X(n)}{n^\sigma \log n}, 
\]
where $Y > 2$, $X = \{X(n)\}_{n \ge 1}$ is a sequence of random variables defined in Section~\ref{Pre}. 
Define
\[
\tilde{\gamma}(T) = \min\{\gamma'(T), \gamma'(2T) \}. 
\]
In this section, we prove the next proposition. 

\begin{proposition} \label{prop1 general}
Let $1/2 < \sigma < 1$. 
Let $T, V$ be large. 
Define 
\[
A_T = A_T(V, Y, \sigma, \gamma) =  \{t \in [T, 2T] : |P_Y(\sigma + i\gamma(t))| \le V  \}
\]
for $Y \ge 3$. 
If 
\begin{equation} \label{Y range general}
3 \le Y \le \exp\left(\frac{\log T \tilde{\gamma}(T)}{V^{\frac{1}{1-\sigma}} (\log V)^{\frac{\sigma}{1-\sigma}}} \right)
\end{equation}
holds and $\gamma$ satisfies (F1) and (F2), then there exist constants $d_1 = d_1(\sigma) > 0$ and $d_2 = d_2(\sigma) >0$ such that for any complex numbers $z_1$, $z_2$ with $|z_1|, |z_2| \le d_1(V\log V)^\frac{\sigma}{1-\sigma}$ we have 
\begin{align*}
&\frac{1}{T} \int_{A_T} \exp\left(z_1P_Y(\sigma + i\gamma(t)) + z_2 \overline{P_Y(\sigma + i\gamma(t))}\right)\, dt \\
&= \mathbb{E}\left[\exp\left(z_1P_Y(\sigma, X) + z_2 \overline{P_Y(\sigma, X)}\right)\right] + E, 
\end{align*}
where $E$ is bounded by 
\begin{align*}
    E \ll \frac{1}{T \tilde{\gamma}(T)} \left( (V\log V)^{\frac{\sigma}{1-\sigma}} Y\right)^{V^\frac{1}{1-\sigma}(\log V)^{\frac{\sigma}{1-\sigma}}} + \exp\left(-d_2 V^{\frac{1}{1-\sigma}}(\log V)^{\frac{\sigma}{1-\sigma}} \right).
\end{align*}
\end{proposition}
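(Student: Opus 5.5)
This follows the Lamzouri--Lester--Radziwi\l\l{} scheme. The only new input is a mean value estimate for Dirichlet polynomials along the curve $t \mapsto \gamma(t)$, which we obtain by changing variables $u = \gamma(t)$. Set $N := \lfloor V^{\frac{1}{1-\sigma}}(\log V)^{\frac{\sigma}{1-\sigma}}\rfloor$. The plan has three steps.

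\textbf{Step 1: truncate the exponential.} On $A_T$ we have $|P_Y| \le V$, and the exponent satisfies
\[
|z_1 P_Y + z_2\overline{P_Y}| \le 2d_1 (V\log V)^{\frac{\sigma}{1-\sigma}} V .
\]
This quantity is small compared with $N$ once $d_1$ is small; note that $V^{1+\frac{\sigma}{1-\sigma}} = V^{\frac{1}{1-\sigma}}$. We therefore replace the exponential by its Taylor polynomial of degree $N$. On $A_T$ the tail is at most $e^{-cN}$, since $k! \ge (k/e)^k$.

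\textbf{Step 2: mean values of the polynomial terms.} Expand
\[
\sum_{k+\ell\le N} \frac{z_1^k z_2^\ell}{k!\,\ell!}\, P_Y(\sigma+i\gamma(t))^k\, \overline{P_Y(\sigma+i\gamma(t))}^{\,\ell}.
\]
Each term is a Dirichlet polynomial of the form $\sum_{m,n\le Y^N} a(m)\overline{b(n)}(n/m)^{i\gamma(t)}$. We integrate over all of $[T,2T]$, not yet over $A_T$.
\begin{itemize}
\item The diagonal $m=n$ contributes exactly $\mathbb{E}[P_Y(\sigma,X)^k\overline{P_Y(\sigma,X)}^{\,\ell}]$, by orthogonality of the $X(n)$.
\item For the off-diagonal terms, substitute $u=\gamma(t)$, so $dt = du/\gamma'(\gamma^{-1}(u))$. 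By (F2), $1/\gamma'$ is monotone, so the second mean value theorem gives
\[
\int_T^{2T} e^{i\gamma(t)\log(n/m)}\,dt \ll \frac{1}{\tilde\gamma(T)\,|\log(n/m)|}.
\]
Here $\tilde{\gamma}(T) = \min\{\gamma'(T),\gamma'(2T)\}$ is exactly the quantity that appears.
\item Since $|\log(n/m)| \gg 1/Y^N$, the off-diagonal part is bounded by $Y^N$ times $\bigl(\sum_n |a(n)|\bigr)\bigl(\sum_n|b(n)|\bigr)$.
\item The coefficient sums are at most $(\sum_{n\le Y}\Lambda(n)/(n^\sigma\log n))^{k+\ell} \le Y^{k+\ell}$. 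Summing over $k,\ell$ with weights $|z|^{k+\ell}/(k!\ell!)$ gives a total of at most
\[
\frac{1}{T\tilde\gamma(T)}\bigl((V\log V)^{\frac{\sigma}{1-\sigma}}Y\bigr)^{N}
\]
up to constants, which is the first error term.
\end{itemize}

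\textbf{Step 3: remove the complement $A_T^c$ and re-sum on the random side.} Here we need a large-deviation bound. By Cauchy--Schwarz, the Taylor polynomial integrated over $[T,2T]\setminus A_T$ is bounded by
\[
\Bigl(\tfrac1T\meas(A_T^c)\Bigr)^{1/2}\Bigl(\tfrac1T\int_T^{2T}|\text{poly}|^2\,dt\Bigr)^{1/2}.
\]
Both factors are controlled by the same mean value theorem applied to $|P_Y|^{2k}$ with $k\approx N$. This step uses the range \eqref{Y range general}, which makes $Y^{2k}\le \exp(\log T\,\tilde\gamma(T))$ harmless. Combined with the random moment bound
\[
\mathbb{E}|P_Y(\sigma,X)|^{2k} \le \bigl(Ck^{1-\sigma}/(\log k)^{\sigma}\bigr)^{2k},
\]
this gives $\meas(A_T^c)/T \ll e^{-cN}$ by Markov's inequality. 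Finally, we re-sum the Taylor polynomial on the random side. The tail is controlled by the same moment bound, giving $\mathbb{E}[\exp(\cdot)]$ up to $e^{-d_2N}$; on $|P_Y(\sigma,X)|>V$ we use the analogous bound with $|z_j|$ small relative to $(V\log V)^{\frac{\sigma}{1-\sigma}}$.

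The main obstacle is the bookkeeping in Step 3. It requires that the moment bound $\mathbb{E}|P_Y(\sigma,X)|^{2k}$ with the sharp $(\log k)^{-\sigma}$ saving be compatible with the size of $z_j$, so that the exponential growth from $|z|^{k}$ is beaten by $1/k!$ and the moment decay. We will follow the corresponding lemma in LLR essentially verbatim. The genuinely new point is the oscillatory integral bound via the substitution $u=\gamma(t)$ and monotonicity of $\gamma'$; this is only a first-derivative test and needs no further hypotheses beyond (F1) and (F2).
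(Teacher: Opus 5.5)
Your route is the paper's: truncate the exponential on $A_T$, then evaluate each polynomial term over all of $[T,2T]$. There the diagonal gives the random moment, and the off-diagonal is handled by the first-derivative test; your substitution $u=\gamma(t)$ with $1/\gamma'$ monotone is exactly how $\tilde\gamma(T)$ enters. You then remove $[T,2T]\setminus A_T$ by Cauchy--Schwarz against a large-deviation bound and resum on the random side.

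What fails as written is taking the truncation degree and the moment order equal to $N=V^{\frac{1}{1-\sigma}}(\log V)^{\frac{\sigma}{1-\sigma}}$ itself. The range \eqref{Y range general} only gives $Y^{N}\le T\tilde\gamma(T)$, so your remark that $Y^{2k}$ is harmless for $k\approx N$ is false; $Y^{2N}$ can be as large as $(T\tilde\gamma(T))^2$. This breaks three steps:
\begin{itemize}
\item The $2k$-th moment along $\gamma$ (Lemma~\ref{higher moment general}) needs the length of $P_Y^k$ to satisfy $Y^{k}\log Y^{k}\le T\tilde\gamma(T)$. At $k=N$ this is not guaranteed. With your cruder $\ell^1$ off-diagonal bound it fails badly, since the error is then of size $Y^{3k}/(T\tilde\gamma(T))$.
\item Markov at order $k=N$ gives roughly $\bigl(C(1-\sigma)^{\sigma}\bigr)^{2N}$, which need not decay.
\item Summing the off-diagonal errors $Y^{2(k+\ell)}/(T\tilde\gamma(T))$ up to $k+\ell=N$ puts $Y^{2N}$, not $Y^{N}$, into the first error term. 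So the stated bound does not follow from your bookkeeping.
\end{itemize}

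The repair is the paper's choice of parameters.
\begin{itemize}
\item Prove the large-deviation bound with moments of order $k=cN$, $c=c(\sigma)$ small. Then $Ck^{1-\sigma}/((\log 2k)^{\sigma}V)\approx C(1-\sigma)^{\sigma}c^{1-\sigma}<e^{-1}$, and Markov yields $e^{-c_1N}$.
\item Truncate the exponential at $Z=c_2N$ with $c_2=c_2(\sigma)$ small. Every moment used then has order at most $Z$, so $Y^{Z}\log Y^{Z}\le T\tilde\gamma(T)$ holds comfortably.
\item The Cauchy--Schwarz growth, which is $\exp(O(c_2^{2-\sigma}N))$, is then beaten by $e^{-c_1N/2}$.
\item The mean-value error becomes $\bigl((V\log V)^{\frac{\sigma}{1-\sigma}}Y\bigr)^{2Z}\le\bigl((V\log V)^{\frac{\sigma}{1-\sigma}}Y\bigr)^{N}$.
\item Finally take $d_1\le e^{-2}c_2/2$. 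On $A_T$ the exponent is then at most $e^{-2}Z$, so the Taylor tail is still $\ll e^{-Z}$.
\end{itemize}
With this adjustment your argument goes through.
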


First, we establish a moment of Dirichlet polynomials with general shifts.

\begin{lemma} \label{mean general}
    Let $1/2 < \sigma < 1$. 
    Let $\gamma$ satisfy (F1) and (F2), $T$ be large, and $Y \ge 3$. 
    Then, for $k, l \in \mathbb{N}$, we have
    \begin{align*}
        &\frac{1}{T}\int_{T}^{2T} P_Y(\sigma + i\gamma(t))^k\overline{P_Y(\sigma + i\gamma(t))}^l\, dt \\
        &= \mathbb{E}\left[P_Y(\sigma, X)^k \overline{P_Y(\sigma, X)}^l\right] + O\left(\frac{Y^{2(k+l)}}{T\tilde{\gamma}(T)} \right). 
    \end{align*}

\end{lemma}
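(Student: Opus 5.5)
Expand both powers. Writing $a(n)=\Lambda(n)/(n^{\sigma}\log n)$ for $2\le n\le Y$, we have
\[
P_Y(\sigma+i\gamma(t))^k\overline{P_Y(\sigma+i\gamma(t))}^l=\sum_{m_1,\dots,m_k}\sum_{n_1,\dots,n_l} a(m_1)\cdots a(m_k)a(n_1)\cdots a(n_l)\Bigl(\frac{n_1\cdots n_l}{m_1\cdots m_k}\Bigr)^{i\gamma(t)}.
\]
Set $M=m_1\cdots m_k$ and $N=n_1\cdots n_l$. The same expansion of $P_Y(\sigma,X)^k\overline{P_Y(\sigma,X)}^l$ gives the factor $X(M)\overline{X(N)}$ in place of $(N/M)^{i\gamma(t)}$. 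By orthogonality of the characters $X(n)$ on $\Omega$, $\mathbb{E}[X(M)\overline{X(N)}]=\mathbf 1_{M=N}$. The diagonal terms $M=N$ therefore contribute exactly $\mathbb{E}[P_Y(\sigma,X)^k\overline{P_Y(\sigma,X)}^l]$ after averaging over $t\in[T,2T]$. It remains to show that the off-diagonal terms together contribute $O(Y^{2(k+l)}/(T\tilde\gamma(T)))$.

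For $M\neq N$, put $\lambda=\log(N/M)\neq0$. We need
\[
\int_T^{2T}e^{i\lambda\gamma(t)}\,dt,
\]
and this is where (F1) and (F2) enter, through a first-derivative (Kusmin--Landau type) estimate. The phase is $\lambda\gamma(t)$ with derivative $\lambda\gamma'(t)$. Since $\gamma$ is increasing, $\gamma'\ge0$. Since $\gamma'$ is monotonic, $|\lambda\gamma'(t)|\ge|\lambda|\min\{\gamma'(T),\gamma'(2T)\}=|\lambda|\tilde\gamma(T)$ on $[T,2T]$, and $1/\gamma'$ is monotonic there.

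To carry this out, write $e^{i\lambda\gamma(t)}=\frac{1}{i\lambda\gamma'(t)}\frac{d}{dt}e^{i\lambda\gamma(t)}$ and integrate by parts. The boundary terms are $\ll 1/(|\lambda|\tilde\gamma(T))$. The remaining integral involves $|d(1/\gamma'(t))|$; because $1/\gamma'$ is monotone, its total variation on $[T,2T]$ is also $\le 1/\tilde\gamma(T)$. Hence
\[
\int_T^{2T}e^{i\lambda\gamma(t)}\,dt\ll\frac{1}{|\lambda|\tilde\gamma(T)}.
\]
The main point to check is that $\tilde\gamma(T)>0$, i.e.\ $\gamma'$ does not vanish at the endpoints. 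If it did, the bound would be trivial, so we may assume $\tilde\gamma(T)>0$.

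Next, bound $\lambda$ from below using the size of $M,N$. Both $M$ and $N$ are at most $Y^{k+l}$. For distinct integers, $|\log(N/M)|\ge \log\bigl(1+1/\min(M,N)\bigr)\gg 1/\max(M,N)\ge Y^{-(k+l)}$. Each off-diagonal term is therefore $\ll Y^{k+l}/(T\tilde\gamma(T))$, with the extra factor $1/T$ coming from the normalisation.

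Finally, sum over the index tuples using the trivial bound $|a(n)|\le1$. There are at most $Y^{k+l}$ tuples, so the total off-diagonal contribution is
\[
\ll \frac{Y^{k+l}\cdot Y^{k+l}}{T\tilde\gamma(T)}=\frac{Y^{2(k+l)}}{T\tilde\gamma(T)}.
\]
This is the claimed error term. The only genuinely delicate step is the first-derivative estimate for general $\gamma$; everything else is the standard diagonal/off-diagonal bookkeeping.
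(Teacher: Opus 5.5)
Your proposal is correct and follows the paper's proof. The diagonal terms $M=N$ give the expectation exactly, and each off-diagonal term is bounded by the first derivative test (monotone $\gamma'$ gives $|\lambda\gamma'|\ge|\lambda|\tilde\gamma(T)$) combined with $|\log(N/M)|^{-1}\ll Y^{k+l}$ and a trivial count of the $\le Y^{k+l}$ terms. Your caveat about $\tilde\gamma(T)>0$ is unnecessary: strict monotonicity of $\gamma$ together with monotonicity of $\gamma'$ forces $\gamma'>0$ on $(T_0,\infty)$.
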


\begin{proof}
    By simple arguments, we see that 
    \begin{align*}
        &\frac{1}{T}\int_{T}^{2T} P_Y(\sigma + i\gamma(t))^k\overline{P_Y(\sigma + i\gamma(t))}^l\, dt \\ 
        &= \frac{1}{T} \sum_{\substack{p_1^{a_1}, \dots, p_k^{a_k} \le Y \\ q_1^{b_1}, \dots, q_l^{b_l} \le Y}} \frac{1}{a_1p_1^{a_1} \dots a_kp_k^{a_k} b_1q_1^{b_1} \dots b_lq_l^{b_l}}\int_{T}^{2T} \left(\frac{q_1^{b_1} \dots q_l^{b_l}}{p_1^{a_1} \dots p_k^{a_k}}\right)^{i\gamma(t)}\, dt. 
    \end{align*}
When $p_1^{a_1} \dots p_k^{a_k} \ne q_1^{b_1} \dots q_l^{b_l}$, applying the first derivative test, we obtain 
\begin{align*}
    \int_{T}^{2T} \left(\frac{q_1^{b_1} \dots q_l^{b_l}}{p_1^{a_1} \dots p_k^{a_k}}\right)^{i\gamma(t)}\, dt 
    \ll \frac{1}{\tilde\gamma(T)} Y^{k+l}
\end{align*}
    since for $p_1^{a_1}, \dots, p_k^{a_k} \le Y$, $q_1^{b_1}, \dots, q_l^{b_l} \le Y$, 
    \[
    \frac{1}{\left|\log\frac{q_1^{b_1} \dots q_l^{b_l}}{p_1^{a_1} \dots p_k^{a_k}}\right|} \ll Y^{k+l}
    \]
    holds.
    Therefore, the non-diagonal term can be estimated as 
    \begin{align*}
        &\frac{1}{T} \sum_{\substack{p_1^{a_1}, \dots, p_k^{a_k} \le Y \\ q_1^{b_1}, \dots, q_l^{b_l} \le Y \\ p_1^{a_1} \dots p_k^{a_k} \ne q_1^{b_1}, \dots, q_l^{b_l}}} \frac{1}{a_1p_1^{a_1} \dots a_kp_k^{a_k} b_1q_1^{b_1} \dots b_lq_l^{b_l}}\int_{T}^{2T} \left(\frac{q_1^{b_1} \dots q_l^{b_l}}{p_1^{a_1} \dots p_k^{a_k}}\right)^{i\gamma(t)} dt\\
        &\ll \frac{Y^{k+l}}{T\tilde\gamma(T)}\left(\sum_{p^a \le Y} \frac{1}{ap^{a\sigma}}\right)^{k+l} \ll \frac{Y^{2(k+l)}}{T\tilde\gamma(T)}. 
    \end{align*}
    The diagonal contribution is  
    \begin{align*}
     &\frac{1}{T}\sum_{\substack{p_1^{a_1}, \dots, p_k^{a_k} \le Y \\ q_1^{b_1}, \dots, q_l^{b_l} \le Y \\ p_1^{a_1} \dots p_k^{a_k} = q_1^{b_1}, \dots, q_l^{b_l}}} \frac{1}{a_1p_1^{a_1} \dots a_kp_k^{a_k} b_1q_1^{b_1} \dots b_lq_l^{b_l}} \int_{T}^{2T} \left(\frac{q_1^{b_1} \dots q_l^{b_l}}{p_1^{a_1} \dots p_k^{a_k}}\right)^{i\gamma(t)} dt\\
     &= \sum_{\substack{p_1^{a_1}, \dots, p_k^{a_k} \le Y \\ q_1^{b_1}, \dots, q_l^{b_l} \le Y \\ p_1^{a_1} \dots p_k^{a_k} = q_1^{b_1}, \dots, q_l^{b_l}}} \frac{1}{a_1p_1^{a_1} \dots a_kp_k^{a_k} b_1q_1^{b_1} \dots b_lq_l^{b_l}} \\
     &=  \mathbb{E}\left[P_Y(\sigma, X)^ k\overline{P_Y(\sigma, X)}^l\right]
    \end{align*}
    Therefore, we obtain the conclusion. 
\end{proof}

Next, we show a lemma for higher moments of Dirichlet polynomials with $\gamma(t)$. 

\begin{lemma} \label{higher moment general}
    Let $\{a(p)\}$ be any complex sequence. 
    Let $\gamma$ satisfy (F1) and (F2), $T$ be large, and $Y \ge 3$. 
    For $k \ge 1$ with 
    \begin{equation} \label{Y}
    Y \le \left(\frac{T\tilde\gamma(T)}{\log T\tilde\gamma(T)} \right)^{\frac{1}{k}}, 
    \end{equation}
    we have
    \[
    \frac{1}{T} \int_{T}^{2T} \left|\sum_{p \le Y}a(p)p^{-i\gamma(t)} \right|^{2k}\, dt \ll k! \left(\sum_{p \le Y} |a(p)|^2 \right)^k.
    \]
\end{lemma}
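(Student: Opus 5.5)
We expand the $2k$-th moment exactly as in the proof of Lemma~\ref{mean general}. Writing $|\sum_{p\le Y}a(p)p^{-i\gamma(t)}|^{2k}$ as a product of $k$ copies of the sum and $k$ copies of its conjugate gives
\[
\frac{1}{T}\int_T^{2T}\Bigl|\sum_{p\le Y}a(p)p^{-i\gamma(t)}\Bigr|^{2k}dt
=\sum_{\substack{m,n\le Y^k}} \frac{b_k(m)\overline{b_k(n)}}{T}\int_T^{2T}\Bigl(\frac{n}{m}\Bigr)^{i\gamma(t)}dt,
\]
where $b_k(m)=\sum_{p_1\cdots p_k=m}a(p_1)\cdots a(p_k)$ is supported on integers $m\le Y^k$ with exactly $k$ prime factors (counted with multiplicity).

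\textbf{Diagonal terms.} The terms with $m=n$ contribute $\sum_m|b_k(m)|^2$. If $m=\prod p_i^{e_i}$ with $\sum e_i=k$, then $b_k(m)=\frac{k!}{\prod e_i!}\prod a(p_i)^{e_i}$. Hence
\[
|b_k(m)|^2\le k!\cdot\frac{k!}{\prod e_i!}\prod|a(p_i)|^{2e_i}.
\]
Summing over $m$ and using the multinomial theorem gives $\sum_m|b_k(m)|^2\le k!\bigl(\sum_{p\le Y}|a(p)|^2\bigr)^k$. This is the main term.

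\textbf{Off-diagonal terms.} For $m\ne n$, both $\gamma$ and $\gamma'$ are monotone by (F1), (F2), so the phase $\gamma(t)\log(n/m)$ has monotone derivative. The first derivative test then gives
\[
\int_T^{2T}(n/m)^{i\gamma(t)}dt\ll \frac{1}{\tilde\gamma(T)|\log(n/m)|}.
\]
Crudely bounding $1/|\log(n/m)|\ll Y^k$ would produce an error of size $Y^k(\sum|b_k|)^2/(T\tilde\gamma(T))$, which is not controlled by $\sum|b_k|^2$. So we use the sharper Montgomery--Vaughan type bound instead. Since $|\log(n/m)|\gg|n-m|/\max(m,n)$, the off-diagonal sum is bounded by
\[
\frac{1}{T\tilde\gamma(T)}\sum_{m\ne n}|b_k(m)b_k(n)|\frac{\max(m,n)}{|m-n|}.
\]
We estimate this with $|b_k(m)b_k(n)|\le\frac12(|b_k(m)|^2+|b_k(n)|^2)$ and, for fixed $m$, $\sum_{n\le Y^k,\,n\ne m}\frac{Y^k}{|m-n|}\ll Y^k\log Y^k$. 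The off-diagonal part is therefore
\[
\ll \frac{Y^k\log(Y^k)}{T\tilde\gamma(T)}\sum_m|b_k(m)|^2 .
\]
Condition~(\ref{Y}) gives $Y^k\le T\tilde\gamma(T)/\log(T\tilde\gamma(T))$, so $Y^k\log Y^k\le T\tilde\gamma(T)$ and the prefactor is $O(1)$. The off-diagonal contribution is thus also $\ll k!\bigl(\sum_{p\le Y}|a(p)|^2\bigr)^k$, which completes the proof.

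\textbf{Main obstacle.} The delicate step is the off-diagonal estimate: it must be bounded by the diagonal quantity $\sum|b_k|^2$ with no loss beyond a factor $Y^k\log Y^k$. The crude bound from Lemma~\ref{mean general} loses too much, so the argument relies on the elementary Hilbert-inequality style estimate above.
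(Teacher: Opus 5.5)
Your proposal is correct and follows the paper's proof: you expand into $\sum_{m,n\le Y^k}$, treat the diagonal as the main term bounded by $k!\bigl(\sum_{p\le Y}|a(p)|^2\bigr)^k$, and control the off-diagonal terms with the first derivative test, the bound $\sum_{m\ne n}|b_k(m)b_k(n)|/|\log(n/m)|\ll Y^k\log Y^k\sum_m|b_k(m)|^2$, and the hypothesis $Y^k\le T\tilde\gamma(T)/\log(T\tilde\gamma(T))$. The only difference is that the paper quotes that off-diagonal bound from Soundararajan's Lemma~3, whereas you prove it directly via $|\log(n/m)|\ge|n-m|/\max(m,n)$, $|xy|\le\frac12(|x|^2+|y|^2)$ and a harmonic sum; that argument is elementary rather than a genuine Hilbert or Montgomery--Vaughan inequality, but it is all that is needed.
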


\begin{proof}
   The proof follows the argument of \cite[Lemma~3]{So}. 
    Write 
    \begin{equation*}
   a_{k, Y}(n)=
  \begin{cases}
     \binom{k}{\alpha_1, \dots, \alpha_N} \prod_{j=1}^ra(p_j)^{\alpha_j} & \text{if  $n = \prod_{j=1}^Np_j^{\alpha_j}$, $p_j \le Y$, $\alpha_j \ge 1$,} \\
       0   & \text{otherwise.}
  \end{cases}
\end{equation*}
Then, 
\begin{align*}
    \int_{T}^{2T} \left|\sum_{p \le Y}a(p)p^{-i\gamma(t)} \right|^{2k}\, dt
    &= \sum_{m, n \le Y^k} a_{k,Y}(m)\overline{a_{k, Y}(n)}\int_{T}^{2T} \left(\frac{n}{m} \right)^{i\gamma(t)}dt \\ 
    &= T\sum_{n\le Y^k}|a_{k,Y}(n)|^2 + O\left(\frac{1}{\tilde\gamma(T)}\sum_{\substack{m, n \le Y^k \\ m \ne n}} \frac{|a_{k, Y}(m)a_{k,Y}(n)|}{\left|\log \frac{n}{m} \right|} \right)
\end{align*}
holds. 
From \cite[Lemma~3]{So}, we have
\[
\frac{1}{\tilde\gamma(T)}\sum_{\substack{m, n \le Y^k \\ m \ne n}} \frac{|a_{k, Y}(m)a_{k,Y}(n)|}{\left|\log \frac{n}{m} \right|} \ll \frac{1}{\tilde\gamma(T)} Y^k\log Y^k \sum_{n \le Y^k}|a_{k, Y}(n)|^2. 
\]
By the assumption
\[
    Y^k \le \frac{T\tilde\gamma(T)}{\log T\tilde\gamma(T)},  
    \]
we see that 
\[
Y^k \log Y^k \le T\tilde\gamma(T)
\]
holds.
Hence, 
\begin{align*}
    \int_{T}^{2T} \left|\sum_{p \le Y}a(p)p^{-i\gamma(t)} \right|^{2k}\, dt \ll T \sum_{n \le Y^k} |a_{k, Y}(n)|^2 \ll T k!\left(\sum_{p \le Y} |a(p)|^2\right)^k. 
\end{align*}
\end{proof}

The following two lemmas are immediate consequences.

\begin{lemma} \label{higher P general}
    Let $1/2 < \sigma < 1$. 
    Let $\gamma$ satisfy (F1) and (F2), and $T, V$ be large. 
    Then, for $k \in \mathbb{N}$ satisfying \eqref{Y}, there exists a positive constant $C = C(\sigma)$ such that 
    \[
    \frac{1}{T}\int_{T}^{2T} |P_Y(\sigma + i\gamma(t))|^{2k}\,dt \le \left(\frac{Ck^{1-\sigma}}{(\log2k)^\sigma} \right)^{2k}. 
    \]
\end{lemma}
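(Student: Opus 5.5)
The plan is to follow the classical argument of Lamzouri--Lester--Radziwi\l\l{} for the moments of $P_Y$, with Lemma~\ref{higher moment general} playing the role of the mean-value estimate. First split $P_Y(\sigma+i\gamma(t))$ into the prime part and the prime-power part:
\[
P_Y(\sigma+i\gamma(t)) = \sum_{p\le Y}\frac{1}{p^{\sigma+i\gamma(t)}} + \sum_{\substack{p^a\le Y\\ a\ge 2}}\frac{1}{a p^{a(\sigma+i\gamma(t))}}.
\]
Since $\sigma>1/2$, the second sum is bounded absolutely by $\sum_{p}\sum_{a\ge2}p^{-a\sigma}=O_\sigma(1)$. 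By Minkowski's inequality it is therefore enough to bound the $2k$-th moment of the prime sum, at the cost of a term $O_\sigma(1)$, which is absorbed into the constant $C$.

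The prime sum is split at a parameter $Z$. For primes $p\le Z$ use the trivial pointwise bound $\sum_{p\le Z}p^{-\sigma}\ll Z^{1-\sigma}/\log Z$. For primes $Z<p\le Y$ apply Lemma~\ref{higher moment general} with $a(p)=p^{-\sigma}$ and the same $k$; hypothesis \eqref{Y} is exactly what that lemma requires. This gives
\[
\frac1T\int_T^{2T}\Bigl|\sum_{Z<p\le Y}p^{-\sigma-i\gamma(t)}\Bigr|^{2k}dt \ll k!\Bigl(\sum_{p>Z}p^{-2\sigma}\Bigr)^k \ll k!\Bigl(\frac{Z^{1-2\sigma}}{(2\sigma-1)\log Z}\Bigr)^k .
\]

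Now take $Z=k\log(2k)$, or simply $Z=k$ (with a trivial adjustment for small $k$). Using $k!\le k^k$, the $2k$-th root of the tail moment is
\[
\ll \sqrt{k}\,\frac{Z^{1/2-\sigma}}{\sqrt{\log Z}} \asymp \frac{k^{1-\sigma}}{(\log 2k)^{\sigma}}
\]
when $Z=k\log 2k$. Indeed, $\sqrt{k}\,(k\log k)^{1/2-\sigma}(\log k)^{-1/2} = k^{1-\sigma}(\log k)^{-\sigma}$. The small-prime part is
\[
\frac{Z^{1-\sigma}}{\log Z} \asymp \frac{k^{1-\sigma}(\log k)^{1-\sigma}}{\log k} = \frac{k^{1-\sigma}}{(\log k)^{\sigma}},
\]
so both pieces have the same size.

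Combining the three pieces by Minkowski's inequality in $L^{2k}$ gives the claimed bound $(Ck^{1-\sigma}/(\log 2k)^\sigma)^{2k}$. The constant depends only on $\sigma$, and the $O(1)$ contributions are absorbed because $k^{1-\sigma}/(\log 2k)^\sigma\gg_\sigma 1$.

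The main point to check is that the $k$-dependence of the implied constant in Lemma~\ref{higher moment general} is harmless. That lemma gives $\ll k!(\cdots)^k$ with an absolute implied constant, coming from $T\sum|a_{k,Y}(n)|^2$ plus an error of the same size. That constant is then raised to the power $1/(2k)$, so it stays bounded. If $Y\le Z$, the tail sum is empty and only the trivial bound is needed, and there $\sum_{p\le Y}p^{-\sigma}\le\sum_{p\le Z}p^{-\sigma}$ is still fine.
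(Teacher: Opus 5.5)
Your argument is correct and is essentially the paper's proof. The paper only says the bound follows from Lemma~\ref{higher moment general} in the same way as \cite[Lemma~2.4]{EIM}, and that argument is exactly your split: prime powers contribute $O_\sigma(1)$, primes $p\le Z\asymp k\log 2k$ are bounded trivially, and primes $Z<p\le Y$ are handled by the $2k$-th moment lemma, with the three pieces combined by Minkowski. The small-$k$ adjustment you flag (e.g.\ $Z=\max(k\log 2k,3)$) is all that is needed to make it rigorous.
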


\begin{proof}
    The lemma follows from Lemma~\ref{higher moment general} in the same way as \cite[Lemma~2.4]{EIM}.
\end{proof}

\begin{lemma} \label{large deviation general}
    Let $1/2 < \sigma < 1$. 
    Let $\gamma$ satisfy (F1) and (F2), and $T, V$ be large. 
    If $Y$ satisfies \eqref{Y range general}, then there exists a positive constant $c_1 = c_1(\sigma)$ such that 
    \[
    \mathbb{P}_T(|P_Y(\sigma + i\gamma(t))| \ge V) \le \exp\left(-c_1V^{\frac{1}{1-\sigma}}(\log V)^{\frac{\sigma}{1-\sigma}} \right). 
    \]
\end{lemma}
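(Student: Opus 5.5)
We apply Markov's inequality with a high moment supplied by Lemma~\ref{higher P general}, choosing the exponent $k$ as large as the range of $Y$ in \eqref{Y range general} allows. Concretely, we set
\[
k = \left\lfloor c\, V^{\frac{1}{1-\sigma}}(\log V)^{\frac{\sigma}{1-\sigma}} \right\rfloor
\]
with a small constant $c=c(\sigma)>0$ fixed below. Markov's inequality together with Lemma~\ref{higher P general} then gives
\[
\mathbb{P}_T(|P_Y(\sigma+i\gamma(t))|\ge V) \le V^{-2k}\,\frac1T\int_T^{2T}|P_Y(\sigma+i\gamma(t))|^{2k}\,dt \le \left(\frac{Ck^{1-\sigma}}{V(\log 2k)^\sigma}\right)^{2k}.
\]

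First we check that this $k$ is admissible, i.e.\ that \eqref{Y} holds. From \eqref{Y range general} we get
\[
k\log Y \le c\,\log (T\tilde\gamma(T)),
\]
so $Y^k\le (T\tilde\gamma(T))^{c}$. This is at most $T\tilde\gamma(T)/\log(T\tilde\gamma(T))$ once $c\le 1/2$ and $T\tilde\gamma(T)$ is large. Here we need $\tilde\gamma(T)$ not to be too small. If $T\tilde\gamma(T)$ is bounded, then \eqref{Y range general} is incompatible with $Y\ge3$ for large $V$, so the hypothesis is vacuous in that case. Hence \eqref{Y} holds for this $k$.

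Next we evaluate the base of the power. Since $\log k \asymp \frac{1}{1-\sigma}\log V$, we have
\[
k^{1-\sigma} = c^{1-\sigma}\,V(\log V)^{\sigma}\,(1+o(1)), \qquad (\log 2k)^\sigma \ge (\log V)^\sigma \quad\text{for large } V,
\]
where the second bound uses $\frac{1}{1-\sigma}>1$. The base is therefore at most $2Cc^{1-\sigma}$. Choosing $c$ small enough that $2Cc^{1-\sigma}\le e^{-1}$ makes the bound at most
\[
e^{-2k}\le \exp\left(-c\,V^{\frac{1}{1-\sigma}}(\log V)^{\frac{\sigma}{1-\sigma}}\right),
\]
which is the claim with $c_1=c$ (absorbing the floor into the constant).

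The step that needs the most care is the compatibility check between \eqref{Y range general} and \eqref{Y}. Converting $\log Y\le \log(T\tilde\gamma(T))/K$, with $K = V^{\frac{1}{1-\sigma}}(\log V)^{\frac{\sigma}{1-\sigma}}$, into $Y^k\le (T\tilde\gamma(T))^{1/2}$ uses only that $k\le cK$ and $c\le 1/2$. Fitting this under $T\tilde\gamma(T)/\log(T\tilde\gamma(T))$ then requires $T\tilde\gamma(T)\to\infty$, which holds in the nonvacuous case as argued above. The remaining computations of the base are routine.
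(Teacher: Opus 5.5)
Your proposal is correct and is essentially the paper's argument. The paper defers to the proof of \cite[Lemma~2.5]{EIM}, which is exactly Markov's inequality applied to the $2k$-th moment bound of Lemma~\ref{higher P general}, with $k \asymp K := V^{1/(1-\sigma)}(\log V)^{\sigma/(1-\sigma)}$. Your explicit check that \eqref{Y range general} with $Y\ge 3$ forces $\log(T\tilde\gamma(T)) \ge K\log 3$, so that \eqref{Y} holds for this $k$, supplies the one detail the paper leaves implicit.
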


\begin{proof}
    Combining Lemma~\ref{higher P general} with the proof of \cite[Lemma~2.5]{EIM}, we can prove this lemma.
    \end{proof}

\begin{proof}[Proof of Proposition~\ref{prop1 general}]
    Let $Z = c_2 V^{\frac{1}{1-\sigma}}(\log V)^{\frac{\sigma}{1-\sigma}}$, 
    where $c_2$ is a sufficiently small positive constant. 
    By the definition of $A_T$, we can divide the integral as follows:
    \begin{align*}
        &\int_{A_T} \exp(z_1P_Y(\sigma + i\gamma(t)) + z_2 \overline{P_Y(\sigma + i\gamma(t))})\,dt \\
        &= \sum_{\substack{k + l < Z \\ k, l \in \mathbb{Z}_{\ge 0}}} \frac{z_1^k z_2^l}{k!l!}\int_{A_T} P_Y(\sigma + i\gamma(t))^k \overline{P_Y(\sigma + i\gamma(t))}^l\,dt 
        + O\left(T \sum_{\substack{k + l \ge Z \\ k, l \in \mathbb{Z}_{\ge 0}}} \frac{|z_1|^k|z_2|^l}{k!l!} V^{k + l} \right). 
    \end{align*}

    If $|z_1|, |z_2| \le 2^{-1}e^{-2}c_2 (V\log V)^{\frac{\sigma}{1-\sigma}} = 2^{-1}e^{-2}V^{-1}Z$ holds, then we have
    \[
    \sum_{\substack{k + l \ge Z \\ k, l \in \mathbb{Z}_{\ge 0}}} \frac{|z_1|^k|z_2|^l}{k!l!} V^{k + l} \ll \exp\left(-c_2V^{\frac{1}{1-\sigma}} (\log V)^{\frac{\sigma}{1-\sigma}}\right). 
    \]
    On the other hand, since for sufficiently small $c_2$ and $k + l < Z$, 
    \[
    Y^k \le \frac{T\tilde\gamma(T)}{\log T\tilde\gamma(T)}
    \]
    holds, for $1 \le k + l \le Z$ we have 
    \begin{align*}
        &\frac{1}{T} \int_{[T, 2T]\setminus A_T} P_Y(\sigma + i\gamma(t)) \overline{P_Y(\sigma + i\gamma(t))}^ldt \\
        &\le \left(\frac{1}{T}\meas([T, 2T] \setminus A_T) \right)^{\frac{1}{2}} \left(\frac{1}{T}\int_{T}^{2T} |P_Y(\sigma + i\gamma(t))|^{2(k+l)}dt \right)^{\frac{1}{2}} \\
        &\ll \exp\left(-\frac{c_1}{2} V^{\frac{1}{1-\sigma} }(\log V)^{\frac{\sigma}{1-\sigma}}\right)\left(\frac{C(\sigma)(k+l)^{1-\sigma}}{(\log2(k+l))^{\sigma}} \right)^{k+l} \\
        &\le  \exp\left(-\frac{c_1}{2} V^{\frac{1}{1-\sigma} }(\log V)^{\frac{\sigma}{1-\sigma}}\right)\left(\frac{C(\sigma)Z^{1-\sigma}}{(\log2Z)^{\sigma}} \right)^{k+l}
    \end{align*}
    by using Lemma~\ref{higher P general} and Lemma~\ref{large deviation general}. 
    When $k + l = 0$, one can obtain the same bound. 
    Hence, we have
    \begin{align*}
        &\frac{1}{T}\sum_{\substack{k + l < Z \\ k, l \in \mathbb{Z}_{\ge 0}}} \frac{z_1^k z_2^l}{k!l!}\int_{A_T} P_Y(\sigma + i\gamma(t))^k\overline{P_Y(\sigma + i\gamma(t))}^ldt \\
        &\ll \exp\left(-\frac{c_1}{2} V^{\frac{1}{1-\sigma} }(\log V)^{\frac{\sigma}{1-\sigma}}\right)\sum_{\substack{0\le k+l < Z \\ k, l \in \mathbb{Z}_{\ge 0}}} \frac{1}{k!l!} \left(2^{-1}C'c_2 V^{\frac{1}{1-\sigma}}(\log V)^{\frac{\sigma}{1-\sigma}} \right)^{k+l} \\
        &\ll \exp\left(-\frac{c_1}{2} V^{\frac{1}{1-\sigma} }(\log V)^{\frac{\sigma}{1-\sigma}}\right) \exp\left(C'c_2 V^{\frac{1}{1-\sigma} }(\log V)^{\frac{\sigma}{1-\sigma}}\right), 
    \end{align*}
    where $C'$ is a positive constant which is independent of $V$ and $c_2$. 
    Here, choosing $c_2 \le \frac{c_1}{C'}$, we deduce 
    \begin{align*}
        &\frac{1}{T}\sum_{\substack{k + l < Z \\ k, l \in \mathbb{Z}_{\ge 0}}} \frac{z_1^k z_2^l}{k!l!}\int_{A_T} P_Y(\sigma + i\gamma(t))^k\overline{P_Y(\sigma + i\gamma(t))}^ldt \\
        &\ll \exp\left(-\frac{c_1}{4} V^{\frac{1}{1-\sigma} }(\log V)^{\frac{\sigma}{1-\sigma}}\right) 
        \ll \exp\left(-c_2 V^{\frac{1}{1-\sigma} }(\log V)^{\frac{\sigma}{1-\sigma}}\right). 
    \end{align*}
    Combining these estimates, the following holds:  
    \begin{align*}
        &\frac{1}{T}\int_{A_T} \exp(z_1P_Y(\sigma + i\gamma(t)) + z_2 \overline{P_Y(\sigma + i\gamma(t))})dt \\
        &= \frac{1}{T}\sum_{\substack{k + l < Z \\ k, l \in \mathbb{Z}_{\ge 0}}} \frac{z_1^k z_2^l}{k!l!}\int_{T}^{2T} P_Y(\sigma + i\gamma(t))^k\overline{P_Y(\sigma + i\gamma(t))}^ldt \\
        & \quad- \frac{1}{T}\sum_{\substack{k + l < Z \\ k, l \in \mathbb{Z}_{\ge 0}}} \frac{z_1^k z_2^l}{k!l!}\int_{[T,2T] \setminus A_T} P_Y(\sigma + i\gamma(t))^k\overline{P_Y(\sigma + i\gamma(t))}^ldt \\
        &\quad + O\left(\exp\left(-c_2V^{\frac{1}{1-\sigma}} (\log V)^{\frac{\sigma}{1-\sigma}}\right) \right) \\
        &= \frac{1}{T}\sum_{\substack{k + l < Z \\ k, l \in \mathbb{Z}_{\ge 0}}} \frac{z_1^k z_2^l}{k!l!}\int_{T}^{2T} P_Y(\sigma + i\gamma(t))^k\overline{P_Y(\sigma + i\gamma(t))}^ldt + O\left(\exp\left(-c_2V^{\frac{1}{1-\sigma}} (\log V)^{\frac{\sigma}{1-\sigma}}\right) \right). 
    \end{align*}
    Applying Lemma~\ref{mean general}, we have
    \begin{align*}
        &\frac{1}{T}\sum_{\substack{k + l < Z \\ k, l \in \mathbb{Z}_{\ge 0}}} \frac{z_1^k z_2^l}{k!l!}\int_{T}^{2T} P_Y(\sigma + i\gamma(t))^k\overline{P_Y(\sigma + i\gamma(t))}^ldt \\
        &= \sum_{\substack{k + l < Z \\ k, l \in \mathbb{Z}_{\ge 0}}} \frac{z_1^k z_2^l}{k!l!}\mathbb{E}\left[ P_Y(\sigma, X)^k \overline{P_Y(\sigma, X)}^l\right] + O\left(\frac{1}{T\tilde\gamma(T)} \sum_{\substack{k+l < Z \\ k, l \ge 0}} \frac{|z_1|^k|z_2|^l}{k!l!} Y^{2(k+l)}\right).
    \end{align*}
    Moreover, the above error term is bounded by 
    \begin{align*}
        \ll \frac{1}{T\tilde\gamma(T)} \left((V\log V)^{\frac{\sigma}{1-\sigma}}Y \right)^{2Z}.
    \end{align*}
    Finally, by similar arguments, we see that 
    \begin{align*}
        &\sum_{\substack{k + l < Z \\ k, l \in \mathbb{Z}_{\ge 0}}} \frac{z_1^k z_2^l}{k!l!}\mathbb{E}\left[ P_Y(\sigma, X)^k \overline{P_Y(\sigma, X)}^l\right] \\
        &= \mathbb{E}\left[\exp\left(z_1P_Y(\sigma, X) + z_2\overline{P_Y(\sigma, X)} \right)\right] - \sum_{\substack{k + l \ge Z \\ k, l \in \mathbb{Z}_{\ge 0}}} \frac{z_1^k z_2^l}{k!l!}\mathbb{E}\left[ P_Y(\sigma, X)^k \overline{P_Y(\sigma, X)}^l\right] \\
        &=\mathbb{E}\left[\exp\left(z_1P_Y(\sigma, X) + z_2\overline{P_Y(\sigma, X)} \right)\right] + O\left(\exp\left(-c_2V^{\frac{1}{1-\sigma}} (\log V)^{\frac{\sigma}{1-\sigma}}\right) \right). 
    \end{align*}
This completes the proof. 
\end{proof}

\section{Proof of Theorem~\ref{main general}} \label{proof main general}

Granville and Soundararajan~\cite{GS} proved that $\log \zeta(s)$ can be approximated by certain Dirichlet polynomials. 
Subsequently, Endo~\cite{En} also showed the following result. 

\begin{lemma} [{\cite[Lemma~3]{En}}]\label{appro}
    Let $1/2 < \sigma_0 \leq 1$ be fixed and let $T$ and $y$ be large numbers with $T \geq y + 3$.
Put 
\begin{align*}
\bm{\ell}\left([ (1/2)T, (5/2)T ];y\right) 
=& \left( \bigcup_{\substack{\rho = \beta + i \gamma;\\
\beta > (1/2)( 1/2 + \sigma_0 ),\\
\gamma \in [ (1/2)T, (5/2) T ]}} \left( \gamma - (y + 3),  \gamma + ( y +3 )\right)\right) \\
& \quad \cup [(1/2)T, (1/2)T + (y + 3) ] 
\cup [ (5/2)T - (y + 3), (5/2)T]. 
\end{align*}
Then we have
\[
\log\zeta( \sigma + i t ) 
= \sum_{2 \leq n \leq y} \frac{\Lambda(n)}{n^{\sigma + it} \log n } 
+ O( y^{(1/2 - \sigma_0)/2} (\log T)^3 )
\]
for $\sigma_0 \leq \sigma \leq 1$ and $t \in [ (1/2)T, (5/2) T ] \setminus \bm{\ell}\left([ (1/2)T, (5/2) T ];y\right)$, and the estimate
\[
\meas \left( \bm{\ell}\left([ (1/2)T, (5/2)T ];y\right) \right) \ll T^{5/4 - \sigma_0/2 } y (\log T)^5
\]
holds.
\end{lemma}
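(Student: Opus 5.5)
The plan is to follow the Granville--Soundararajan contour-shift argument: approximate $\log\zeta$ by a truncated Perron integral, and move the line of integration into the zero-free region supplied by the excision of $\bm{\ell}$. Put $\sigma_1 := \frac12(\frac12+\sigma_0)$, so that $\sigma_1-\sigma_0 = (\frac12-\sigma_0)/2$. Fix $t\in[(1/2)T,(5/2)T]\setminus \bm{\ell}([(1/2)T,(5/2)T];y)$ and $\sigma_0\le\sigma\le 1$. By the definition of $\bm{\ell}$, the rectangle
\[
\{z : \sigma_1<\Re z\le 1,\ |\Im z - t|\le y+2\}
\]
contains no zeros of $\zeta$ and lies inside $[(1/2)T,(5/2)T]$ in height. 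Hence $\log\zeta$ is holomorphic on a slightly smaller rectangle.

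First I would bound $\log\zeta$ there. Apply the Borel--Carathéodory theorem to $\log\zeta(z)$ on discs centred at $2+i t'$ with $|t'-t|\le y+1$, whose radii reach to $\Re z = \sigma_1+\delta$. Use $\Re\log\zeta = \log|\zeta|\le \log T + O(1)$ together with the convexity bound. This gives
\[
\log\zeta(z)\ll \frac{\log T}{\Re z-\sigma_1}
\]
for $\Re z\ge \sigma_1+\delta$ and $|\Im z - t|\le y+1$, and in particular $\log\zeta \ll (\log T)^2$ on $\Re z\ge \sigma_1 + 1/\log y$ or a similar fixed-distance line.

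Next I would represent the Dirichlet polynomial by a smoothed Perron formula. With $c = 1-\sigma+1/\log y$,
\[
\frac{1}{2\pi i}\int_{c-iy}^{c+iy} \log\zeta(\sigma+it+w)\,\frac{y^w}{w}\,dw
= \sum_{2\le n\le y}\frac{\Lambda(n)}{n^{\sigma+it}\log n} + O\!\left(\frac{\log y}{y^{\sigma}}\cdot\text{(standard Perron remainder)}\right).
\]
Alternatively one can use a kernel such as $y^w(e^w-1)/w^2$ to get absolute convergence; the truncation at height $y$ is why the excised intervals have half-length $y+3$. Then I shift the segment to $\Re w = \sigma_1-\sigma+1/\log T$. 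The residue at $w=0$ produces $\log\zeta(\sigma+it)$. The horizontal segments and the new vertical segment contribute
\[
\ll y^{\sigma_1-\sigma}(\log T)^2 \int_{-y}^{y}\frac{dv}{|w|} \ll y^{(1/2-\sigma_0)/2}(\log T)^3,
\]
using $\sigma\ge\sigma_0$ and $y\le T$. The Perron truncation error is also of this size after adjusting constants, which gives the claimed approximation.

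Finally, for the measure of $\bm{\ell}$: each zero $\rho=\beta+i\gamma$ with $\beta>\sigma_1$ and $\gamma\in[(1/2)T,(5/2)T]$ contributes an interval of length $2(y+3)$, and the two end intervals contribute $O(y)$. Ingham's classical density estimate $N(\sigma,T)\ll T^{3/2-\sigma}(\log T)^5$ then gives
\[
\meas(\bm{\ell})\ll y\,N(\sigma_1,3T)+y\ll T^{3/2-\sigma_1} y(\log T)^5 = T^{5/4-\sigma_0/2}y(\log T)^5.
\]
I expect the main obstacle to be the uniform bound for $\log\zeta$ near the left edge of the zero-free rectangle, together with keeping the Perron truncation error within $y^{(1/2-\sigma_0)/2}(\log T)^3$. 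This is where the choices of the shifted line ($\sigma_1$ plus a small margin) and of the kernel must be balanced so that only three powers of $\log T$ appear.
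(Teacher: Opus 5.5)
Your proposal is correct and follows essentially the standard Granville--Soundararajan route that the paper points to when it quotes this lemma from Endo without proof: a Borel--Carath\'eodory bound $\log\zeta(z)\ll \log T/(\Re z-\sigma_1)$ in the zero-free rectangle, a truncated Perron formula at height $y$, a contour shift to $\Re(\sigma+it+w)=\sigma_1+1/\log T$ giving $(\log T)^2\, y^{\sigma_1-\sigma}\log y\ll y^{(1/2-\sigma_0)/2}(\log T)^3$, and the density bound $N(\sigma,T)\ll T^{3/2-\sigma}(\log T)^5$ for the measure. One small slip does no harm: discs of radius close to $3/2$ centred at heights $t'$ with $|t'-t|\le y+1$ reach outside the half-height-$(y+2)$ rectangle you named, but they stay inside $|\Im z-t|<y+3$, which is what $t\notin\bm{\ell}$ actually guarantees.
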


Fix $\sigma > 1/2$, and let $\sigma_0 = \frac{1}{2}\left(\frac{1}{2} + \sigma \right)$. 
Let $\eta = \left(\log\gamma(2T) \right)^{\frac{14}{\sigma_0 - \frac{1}{2}}}$.  
We define the set $\mathcal{I}(T)$ as 
\[
\mathcal{L}_\gamma(T) = \{t \in [T, 2T] : \gamma(t) \notin \bm{\ell}\left([\gamma(T/2), \gamma(5T/2) ];\eta\right)\}, 
\]
where $\bm{\ell}$ is defined in Lemma~\ref{appro} replacing $T/2, 5T/2$ by $\gamma(T/2)$, $\gamma(5T/2)$. 
Here, for a subset $I \subset [T, 2T]$, define 
\[
\mathbb{P}_{I}(f(t) \in A) := \frac{1}{T} \meas\{t \in I : f(t) \in A\}, 
\]
for every measurable function $f$ and measurable set $A$. 
Then, for any rectangle $\mathcal{R}$, we see that 
\begin{align*}
    &\mathbb{P}_T(\log\zeta(\sigma + i\gamma(t)) \in \mathcal{R}) \\
    &= \mathbb{P}_{\mathcal{J}_{\gamma}(T)}(\log\zeta(\sigma + i\gamma(t)) \in \mathcal{R}) 
    + O\left(\frac{1}{T}\meas\{[T, 2T] \setminus \mathcal{J}(T)\} \right). 
\end{align*}
Here, the implicit constant does not depend on $\gamma$ and $\mathcal{R}$. 

\begin{lemma} \label{measure error}
    Let $\gamma \in \mathcal{F}'$. 
    Then, for sufficiently small $\varepsilon > 0$, 
    \[
    \frac{1}{T}\meas\{[T, 2T] \setminus \mathcal{L}_{\gamma}(T)\} \ll_\varepsilon \gamma\left(\frac{T}{2} \right)^{\frac12(\frac{1}{2} - \sigma_0) + \varepsilon}. 
    \]
    Hence, we have $\mathcal{L}_\gamma(T) \sim T$. 
\end{lemma}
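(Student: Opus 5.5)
The plan is to transfer the measure bound of Lemma~\ref{appro} from the $\gamma$-side back to the $t$-side by a change of variables. Put $E := \bm{\ell}([\gamma(T/2),\gamma(5T/2)];\eta)$. Then
\[
[T,2T]\setminus\mathcal{L}_\gamma(T) = \{t\in[T,2T] : \gamma(t)\in E\}.
\]
By (F1), $\gamma$ is strictly increasing, and by (F2) it is $C^1$. Substituting $u=\gamma(t)$ therefore gives
\[
\meas\{t\in[T,2T]:\gamma(t)\in E\} = \int_{E\cap[\gamma(T),\gamma(2T)]}\frac{du}{\gamma'(\gamma^{-1}(u))} \le \frac{\meas(E)}{\min_{t\in[T,2T]}\gamma'(t)}.
\]
Since $\gamma'$ is monotone by (F2), the minimum over $[T,2T]$ is attained at an endpoint, so it equals $\tilde\gamma(T)=\min\{\gamma'(T),\gamma'(2T)\}$. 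Condition (F3) gives $\gamma'(t)\ge \alpha\gamma(t)/t$, hence $\tilde\gamma(T)\ge \alpha\gamma(T)/(2T)$. This yields
\[
\frac1T\meas\{[T,2T]\setminus\mathcal{L}_\gamma(T)\}\ll \frac{\meas(E)}{\gamma(T)}.
\]

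Next I bound $\meas(E)$ by the zero-density estimate inside Lemma~\ref{appro}. That lemma is stated for the interval $[(1/2)T',(5/2)T']$, while here the interval is $[\gamma(T/2),\gamma(5T/2)]$, which need not have this shape. I would redo the count directly. The set $E$ is covered by intervals of length $2(\eta+3)$ around ordinates of zeros with $\beta>(1/2)(1/2+\sigma_0)$, plus two end intervals. This gives
\[
\meas(E)\ll \eta\, N\bigl(\tfrac12(\tfrac12+\sigma_0),\gamma(5T/2)\bigr) + \eta.
\]
Ingham's bound, or the classical bound $N(\sigma',U)\ll U^{3/2-\sigma'}(\log U)^5$ used in Lemma~\ref{appro}, then gives
\[
\meas(E)\ll \gamma(5T/2)^{5/4-\sigma_0/2}\,\eta\,(\log\gamma(5T/2))^5.
\]
Alternatively, I would cover $[\gamma(T/2),\gamma(5T/2)]$ by dyadic blocks and apply Lemma~\ref{appro} on each block; the result is the same up to logarithms.

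Combining the two steps, I get
\[
\frac1T\meas\{[T,2T]\setminus\mathcal{L}_\gamma(T)\} \ll \frac{\gamma(5T/2)^{5/4-\sigma_0/2}\,\eta\,(\log\gamma(5T/2))^5}{\gamma(T)}.
\]
This is the main obstacle, because $\gamma(5T/2)$ may be much larger than $\gamma(T)$; for $\gamma=e^t$ it is $\gamma(T)^{5/2}$. So comparing $\gamma(5T/2)^{5/4-\sigma_0/2}$ with $\gamma(T)$ naively fails, and the way $\gamma$ sits in the exponents has to be arranged carefully. My remedy is to localize. In the change of variables, restrict $u$ to $[\gamma(T),\gamma(2T)]$ and count only zeros with ordinates in $[\gamma(T)-\eta-3,\gamma(2T)+\eta+3]$. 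On each dyadic block $[U,2U]$ inside this range, $\gamma^{-1}(u)\in[T,2T]$, and (F3) gives $\gamma'(\gamma^{-1}(u))\ge \alpha u/(2T)$. Hence the contribution of that block to $T^{-1}\meas$ is
\[
\ll \frac{\meas(E\cap[U,2U])}{U}\ll U^{1/4-\sigma_0/2}\,\eta\,(\log U)^5 .
\]
Summing over the dyadic blocks, the sum is dominated by the smallest $U\approx\gamma(T)\ge\gamma(T/2)$, because the exponent $1/4-\sigma_0/2=\frac12(\frac12-\sigma_0)$ is negative. It remains to absorb $\eta(\log U)^5$ into $U^{\varepsilon}$. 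By (F4), $\log\gamma(2T)\ll_\varepsilon\gamma(T)^{\varepsilon}$, so
\[
\eta=(\log\gamma(2T))^{14/(\sigma_0-1/2)}\ll_\varepsilon \gamma(T)^{\varepsilon'},
\]
and likewise for the factor $(\log\gamma(2T))^5$. The number of dyadic blocks is $\ll\log\gamma(2T)$, which is also $\ll\gamma(T)^{\varepsilon}$. Since $\gamma(T/2)\le\gamma(T)$ and the exponent is negative, replacing $\gamma(T)$ by $\gamma(T/2)$ only weakens the bound. This gives the stated estimate $\ll_\varepsilon \gamma(T/2)^{\frac12(\frac12-\sigma_0)+\varepsilon}$, which tends to $0$. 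Therefore $\meas\,\mathcal{L}_\gamma(T)\sim T$.
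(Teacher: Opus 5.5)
Your proposal is correct and is essentially the paper's argument. The paper bounds each preimage interval by $\eta\,\gamma^{-1}(\gamma_\rho+\eta+3)/\gamma_\rho \ll \eta T/\gamma_\rho$ using (F3), then sums $\eta\sum 1/\gamma_\rho$ over the relevant zeros with the zero-density estimate, and absorbs $\eta$ and the logarithmic factor via (F4). Your localized change of variables over dyadic blocks is the same computation organized by $u=\gamma(t)$ rather than zero by zero, and your observation that the naive bound $\meas(E)/\tilde\gamma(T)$ fails is exactly why that $1/\gamma_\rho$ weight is needed.
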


\begin{proof}
    By the definition of $\mathcal{F}'$, for the interval $(\gamma_\rho - (\eta+3), \gamma_\rho + (\eta + 3))$, the measure of the set $t$ satisfying $\gamma(t) \in (\gamma_\rho - (\eta+3), \gamma_\rho + (\eta + 3))$ is bounded by   
    \begin{align*}
     \gamma^{-1}(\gamma_\rho + (\eta + 3)) - \gamma^{-1}(\gamma_\rho - (\eta+3)) \ll \eta \frac{\gamma^{-1}(\gamma_\rho + \eta + 3)}{\gamma_\rho}. 
    \end{align*}
    Here, $\gamma_\rho$ is the imaginary part of $\rho = \beta_\rho + i\gamma_\rho$ with $\zeta(\rho) = 0$ and $\gamma(T/2) \le \gamma_\rho \le \gamma(5T/2)$. 
    Hence by the zero density estimate (cf. \cite[Theorem~9.19.A]{Ti}) and (F4), we obtain 
    \begin{align*}
        \frac{1}{T}\meas\{[T, 2T] \setminus \mathcal{L}_{\gamma}(T)\} &\ll \frac{\eta}{T}\sum_{\substack{\beta_\rho > \frac12(\frac12 + \sigma_0) \\ \gamma_\rho \in [\gamma(T/2), \gamma(5T/2)]}} \frac{\gamma^{-1}(\gamma_\rho + \eta + 3)}{\gamma_\rho} \\ 
        & \ll \eta\sum_{\substack{\beta_\rho > \frac12(\frac12 + \sigma_0) \\ \gamma_\rho \in [\gamma(T/2), \gamma(5T/2)]}} \frac{1}{\gamma_\rho} \\
        & \ll \eta\gamma\left(\frac{T}{2} \right)^{\frac{1}{2} - \frac12(\frac12 + \sigma_0)} \left(\log\gamma\left( \frac{5T}{2}\right)\right)^5 \ll_\varepsilon \gamma\left(\frac{T}{2} \right)^{\frac12(\frac{1}{2} - \sigma_0) + \varepsilon}.
    \end{align*}
\end{proof}

The key new ingredient is the use of condition (F4), which enables us to estimate the preimage of the exceptional set under the general shift. 
Lemma~\ref{measure error} implies 
\begin{align*}
    &\mathbb{P}_T(\log\zeta(\sigma + i\gamma(t)) \in \mathcal{R}) \\
    &= \mathbb{P}_{\mathcal{L}_{\gamma}(T)}(\log\zeta(\sigma + i\gamma(t)) \in \mathcal{R}) 
    + O\left(\gamma(T/2)^{-c} \right), 
\end{align*}
for a sufficiently small constant $c  =c(\sigma) > 0$. 

Next, we consider a subset of $\mathcal{L}_\gamma(T)$ such that $\log\zeta$ can be approximated by a Dirichlet polynomial with smaller length.
Let $Y = Y(T) = \log(\gamma(T))^{\frac{12}{2\sigma-1}}$. 

\begin{lemma} \label{new}
     Let $1/2 < \sigma < 1$ and $\gamma \in \mathcal{F}'$. 
    Then, as $T \to \infty$
    \begin{align*}
    &\frac{1}{T} \meas\{t \in \mathcal{L}_\gamma(T) : |P_\eta(\sigma + i\gamma(t)) - P_Y(\sigma + i\gamma(t))| > (\log \gamma(T))^{-2\sigma}  \} \ll (\log\gamma(T))^{-8}. 
    \end{align*}
    As consequently, there exists $\mathcal{I}_\gamma(T) \subset \mathcal{L}_\gamma(T) \subset [T, 2T]$ such that 
    \[
\log\zeta(\sigma + i\gamma(t)) = P_Y(\sigma + i\gamma(t)) + O(((\log \gamma(T))^{-2\sigma})
\]
holds for $t \in \mathcal{I}_\gamma(T)$ and 
\begin{align*}
     \frac{1}{T}\meas([T, 2T] \setminus \mathcal{I}_{\gamma}(T)) \ll (\log\gamma(T))^{-8}. 
\end{align*}
\end{lemma}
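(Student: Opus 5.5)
We prove the first assertion by a second-moment (Chebyshev) bound on the tail $P_\eta - P_Y$, and then take $\mathcal{I}_\gamma(T)$ to be the complement in $\mathcal{L}_\gamma(T)$ of the exceptional set. Write
\[
Q(t) := P_\eta(\sigma + i\gamma(t)) - P_Y(\sigma + i\gamma(t)) = \sum_{Y < n \le \eta} \frac{\Lambda(n)}{n^{\sigma + i\gamma(t)}\log n}.
\]
Since $Y = (\log\gamma(T))^{12/(2\sigma-1)}$ and $\eta = (\log\gamma(2T))^{14/(\sigma_0-1/2)}$, we have $Y \le \eta$ for large $T$; (F4) gives $\log\gamma(2T) \ll \gamma(T)^{\varepsilon}$, so all these lengths are small powers of $\gamma(T)$.

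By Chebyshev's inequality, after enlarging $\mathcal{L}_\gamma(T)$ to $[T,2T]$,
\[
\frac1T\meas\{t : |Q(t)| > (\log\gamma(T))^{-2\sigma}\} \le (\log\gamma(T))^{4\sigma}\,\frac1T\int_T^{2T}|Q(t)|^2\,dt .
\]
The mean square is computed exactly as in the proof of Lemma~\ref{mean general} with $k = l = 1$, but restricted to $Y < n \le \eta$. The diagonal terms contribute
\[
\sum_{Y < n \le \eta} \frac{\Lambda(n)^2}{n^{2\sigma}(\log n)^2} \ll \frac{Y^{1-2\sigma}}{\log Y}.
\]
By the first derivative test, the off-diagonal terms contribute $\ll \eta^{2}\,\big(T\tilde\gamma(T)\big)^{-1}\big(\sum_{n\le\eta} n^{-\sigma}\big)^2 \ll \eta^{4}/(T\tilde\gamma(T))$. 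For the choice of $Y$,
\[
Y^{1-2\sigma} = (\log\gamma(T))^{-12}, \qquad \text{so} \qquad (\log\gamma(T))^{4\sigma}\,Y^{1-2\sigma} \le (\log\gamma(T))^{-8}
\]
because $4\sigma < 4$. This is the reason for the exponent $12/(2\sigma-1)$.

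The main obstacle is showing that the off-diagonal term $\eta^4(\log\gamma(T))^{4\sigma}/(T\tilde\gamma(T))$ is also $\ll (\log\gamma(T))^{-8}$. For this I would use (F3) together with the monotonicity in (F2). These give $\tilde\gamma(T) \gg \gamma(T)/T$ (using $\gamma'(2T) \ge \alpha\gamma(2T)/(2T) \ge \alpha\gamma(T)/(2T)$ and the analogous bound at $T$), hence $T\tilde\gamma(T) \gg \gamma(T)$. Meanwhile (F4) gives $\eta^4(\log\gamma(T))^{4\sigma} \ll_\varepsilon \gamma(T)^{\varepsilon}$, so the off-diagonal part is $\ll \gamma(T)^{-1+\varepsilon}$, which is negligible. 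This step requires $\gamma \in \mathcal{F}$ (for (F3)), which is part of the hypothesis $\gamma\in\mathcal{F}'$. If the bound on $\tilde\gamma$ were problematic, one could fall back on (F1)–(F2) in a direct treatment of the integral, but (F3) is the natural route.

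For the consequence, define $\mathcal{I}_\gamma(T)$ as the set of $t \in \mathcal{L}_\gamma(T)$ with $|Q(t)| \le (\log\gamma(T))^{-2\sigma}$. For such $t$, $\gamma(t) \in [\gamma(T/2),\gamma(5T/2)]$ lies outside the exceptional set, so Lemma~\ref{appro} (applied on that interval with $y = \eta$) gives
\[
\log\zeta(\sigma+i\gamma(t)) = P_\eta(\sigma+i\gamma(t)) + O\big(\eta^{(1/2-\sigma_0)/2}(\log\gamma(5T/2))^3\big).
\]
By the choice of $\eta$, the error is $\ll (\log\gamma(2T))^{-7}(\log\gamma(5T/2))^3$. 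To compare $\log\gamma(2T)$, $\log\gamma(5T/2)$ and $\log\gamma(T)$, the cleanest option is to check that they are comparable for the growth considered; failing that, one uses (F4) together with a slightly generous constant in the exponent. Either way the error is $\ll (\log\gamma(T))^{-2\sigma}$, since $2\sigma < 2$. Adding $Q(t)$ yields the stated approximation by $P_Y$. Finally, the measure of $[T,2T]\setminus\mathcal{I}_\gamma(T)$ is bounded by the sum of the bound from Lemma~\ref{measure error}, which is a negative power of $\gamma(T/2)$, and the exceptional measure $\ll (\log\gamma(T))^{-8}$, giving the total bound $(\log\gamma(T))^{-8}$.
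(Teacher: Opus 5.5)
Your proof of the first assertion follows the paper's argument: the mean square of $P_\eta-P_Y$ via the first derivative test as in Lemma~\ref{mean general}, (F4) to turn powers of $\eta$ into $\gamma(T)^{\varepsilon}$, and Chebyshev. Your use of (F3) to get $T\tilde\gamma(T)\ge\alpha\gamma(T)/2$ supplies exactly what the paper leaves implicit when it treats the off-diagonal term as negligible. The cruder $\eta^4$ in place of $\eta^{2-2\sigma}$ is harmless.

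The gap is in the consequence, which the paper only asserts. You bound the error from Lemma~\ref{appro} by $(\log\gamma(2T))^{-7}(\log\gamma(5T/2))^{3}$ and claim this is $\ll(\log\gamma(T))^{-2\sigma}$ ``either way''. That is false for general $\gamma\in\mathcal{F}'$. (F4) only gives $\log\gamma(5T/2)\ll\gamma(T)^{\varepsilon}$, which is useless against a polylogarithmic saving. Concretely, $\gamma(t)=\exp(e^{t^4})$ lies in $\mathcal{F}'$ and satisfies $\log\gamma(5T/2)=(\log\gamma(2T))^{(5/4)^4}$, so your bound is $(\log\gamma(2T))^{3(5/4)^4-7}\to\infty$. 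For iterated exponentials, no fixed enlargement of the exponent in $\eta$ rescues it.

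The repair is to note that the factor $(\log T)^3$ in Lemma~\ref{appro} is really the logarithm of the height of the point, since the Granville--Soundararajan bound is local. For $t\in\mathcal{L}_\gamma(T)$, put $u=\gamma(t)\le\gamma(2T)$ and apply Lemma~\ref{appro} with $T$ replaced by $u$. Any zero with $\beta>\frac12(\frac12+\sigma_0)$ and $|\gamma_\rho-u|<\eta+3$ has $\gamma_\rho\in[\gamma(T/2),\gamma(5T/2)]$. This holds because (F3) gives $\gamma(T)\ge 2^{\alpha}\gamma(T/2)$ and $\gamma(5T/2)\ge(5/4)^{\alpha}\gamma(2T)$, while (F4) gives $\eta\ll\gamma(T/2)^{\varepsilon}$. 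Hence $u$ is not exceptional, and the error is $\ll\eta^{(1/2-\sigma_0)/2}(\log\gamma(2T))^{3}=(\log\gamma(2T))^{-4}\le(\log\gamma(T))^{-2\sigma}$. This is why $\eta$ is defined through $\gamma(2T)$; alternatively, define $\eta$ through $\gamma(5T/2)$, which (F4) still permits.

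Finally, your last line silently needs (F4) once more. The bound $\gamma(T/2)^{-c}\ll(\log\gamma(T))^{-8}$ holds only because $\log\gamma(T)\ll_{\varepsilon}\gamma(T/2)^{\varepsilon}$.
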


\begin{proof}
    By standard methods, we obtain 
    \begin{align*}
    \frac{1}{T}\int_{T}^{2T} \left|P_\eta(\sigma + i\gamma(t)) - P_Y(\sigma + i\gamma(t)) \right|^2 dt \ll Y^{1-2\sigma} + \frac{\eta^{2-2\sigma}}{T\tilde{\gamma}(T)}. 
    \end{align*}
    The condition (F4) implies $\eta^{2-2\sigma} = O(\gamma(T)^\varepsilon)$ for sufficiently small $0 < \varepsilon < 1$. 
    Therefore, we have 
    \begin{align*}
    \frac{1}{T}\int_{T}^{2T} \left|P_\eta(\sigma + i\gamma(t)) - P_Y(\sigma + i\gamma(t)) \right|^2 dt \ll (\log\gamma(T))^{-12}. 
    \end{align*}
    Applying the Chebyshev inequality, we evaluate 
    \begin{align*}
        &\frac{1}{T} \meas\{t \in \mathcal{L}_\gamma(T) : |P_\eta(\sigma + i\gamma(t)) - P_Y(\sigma + i\gamma(t))| > (\log \gamma(T))^{-2\sigma} \} \\
        &\ll (\log\gamma(T))^{4\sigma - 12} \ll (\log\gamma(T))^{-8}. 
    \end{align*}
    Therefore, there exists $\mathcal{I}_\gamma(T) \subset \mathcal{L}_\gamma(T)$ such that 
    \[
\log\zeta(\sigma + i\gamma(t)) = P_Y(\sigma + i\gamma(t)) + O(((\log \gamma(T))^{-2\sigma})
\]
holds for $t \in \mathcal{I}_\gamma(T)$. 
\end{proof}

By the above lemma, 
\begin{align*}
      &\mathbb{P}_T(\log\zeta(\sigma + i\gamma(t)) \in \mathcal{R}) \\
    &= \mathbb{P}_{\mathcal{I}_{\gamma}(T)}(\log\zeta(\sigma + i\gamma(t)) \in \mathcal{R}) 
    + O\left((\log\gamma(T))^{-4} \right)
\end{align*}
holds. 
Under this setting, the following lemma holds. 
\begin{lemma} \label{higher general}
    Let $1/2 < \sigma < 1$, and $\gamma \in \mathcal{F}'$.  
    Let $T$ be sufficiently large. 
    Then, for $k \ge 1$ satisfying \eqref{Y}, 
    \begin{align*}
        &\frac{1}{T}\int_{\mathcal{I}_{\gamma}(T)} |\log\zeta(\sigma + i\gamma(t))|^{2k}dt \\
        &\ll A^k\frac{k^{2k(1-\sigma_0)}}{(\log2k)^{2k\sigma_0}} + A^k(\log \gamma(T))^{-2k}, 
    \end{align*}
    where $A$ is a positive constant. 
    The implicit constant is independent of $k$. 
\end{lemma}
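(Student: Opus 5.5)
On $\mathcal{I}_\gamma(T)$, Lemma~\ref{new} gives the pointwise approximation
\[
\log\zeta(\sigma+i\gamma(t)) = P_Y(\sigma+i\gamma(t)) + O\bigl((\log\gamma(T))^{-2\sigma}\bigr),
\]
with $Y=(\log\gamma(T))^{\frac{12}{2\sigma-1}}$. The plan is to transfer the $2k$-th moment of $\log\zeta$ to the $2k$-th moment of the Dirichlet polynomial $P_Y$, and then bound the latter by Lemma~\ref{higher P general}.

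By the elementary inequality $|a+b|^{2k}\le 2^{2k-1}(|a|^{2k}+|b|^{2k})$, we get for $t\in\mathcal{I}_\gamma(T)$
\[
|\log\zeta(\sigma+i\gamma(t))|^{2k}\le 2^{2k}|P_Y(\sigma+i\gamma(t))|^{2k} + 2^{2k}C^{2k}(\log\gamma(T))^{-4k\sigma}.
\]
Integrating over $\mathcal{I}_\gamma(T)\subset[T,2T]$ and dividing by $T$, the second term contributes at most $A^k(\log\gamma(T))^{-4k\sigma}\le A^k(\log\gamma(T))^{-2k}$, since $\sigma>1/2$. For the first term, we enlarge the domain to $[T,2T]$ (the integrand is nonnegative). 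Since $k$ satisfies \eqref{Y}, Lemma~\ref{higher P general} then gives
\[
\frac1T\int_T^{2T}|P_Y(\sigma+i\gamma(t))|^{2k}\,dt\le\Bigl(\frac{Ck^{1-\sigma}}{(\log 2k)^{\sigma}}\Bigr)^{2k}.
\]

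It remains to compare exponents. Since $\sigma_0<\sigma$, we have $k^{1-\sigma}\le k^{1-\sigma_0}$. Also $(\log 2k)^{-\sigma}\le (\log 2k)^{-\sigma_0}$ once $\log 2k\ge1$; the finitely many small $k$ can be absorbed into $A$. Thus the main term is at most $A^k k^{2k(1-\sigma_0)}(\log 2k)^{-2k\sigma_0}$ with $A=4C^2$ (enlarged as needed). All constants depend only on $\sigma$, so the implicit constant is independent of $k$.

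The one point needing care is the hypothesis of Lemma~\ref{higher P general}: the $Y$ there must be the same $Y$ as in Lemma~\ref{new}, and condition \eqref{Y} must hold for it. The statement assumes \eqref{Y} directly, so no further work is needed. I read the stated form with $\sigma_0$ as deliberately weaker than what this argument gives. Presumably the author intends an alternative route through $P_\eta$, where only $\sigma_0$ is available. The direct route through $P_Y$ yields the stated bound, so I would present that.
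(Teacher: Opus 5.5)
Your proposal is correct and is essentially the paper's proof: split $\log\zeta = P_Y + (\log\zeta - P_Y)$ on $\mathcal{I}_\gamma(T)$, extend the $P_Y$-moment to $[T,2T]$ and apply Lemma~\ref{higher P general}, and bound the remainder pointwise via Lemma~\ref{new} together with $\meas(\mathcal{I}_\gamma(T)) \le T$. The paper also goes through $P_Y$, not $P_\eta$, and states the weaker $\sigma_0$-exponents without comment. Your explicit comparison of the $\sigma$ and $\sigma_0$ exponents, and your constant $2^{2k}$ in place of the paper's $2^k$, simply supply details the paper leaves implicit.
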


\begin{proof}
    By Lemma~\ref{appro}, we have 
    \begin{align*}
        &\frac{1}{T}\int_{\mathcal{I}_{\gamma}(T)} |\log\zeta(\sigma + i\gamma(t))|^{2k}dt \\
        &\le  \frac{2^k}{T} \int_{\mathcal{I}_{\gamma}(T)} \left| P_Y(\sigma + i\gamma(t))\right|^{2k}dt + \frac{2^k}{T} \int_{\mathcal{I}_{\gamma}(T)} \left|\log\zeta(\sigma + i\gamma(t)) - P_Y(\sigma + i\gamma(t))\right|^{2k}dt \\
        &=: S_1 + S_2. 
    \end{align*}
   By Lemma~\ref{higher P general}, the first term satisfies
   \begin{align*}
       S_1 \le \frac{2^k}{T} \int_{T}^{2T} \left| P_Y(\sigma + i\gamma(t))\right|^{2k}dt \le A^k\frac{k^{2k(1-\sigma_0)}}{(\log2k)^{2k\sigma_0}}. 
   \end{align*}
   Next, we estimate $S_2$. 
   Applying Lemma~\ref{new}, we see that 
   \begin{align*} 
   S_2 \ll 2^k (\log\gamma(T))^{-4k\sigma} \frac{1}{T}\meas(\mathcal{I}_\gamma(T)) \ll A^k (\log \gamma(T))^{-2k}. 
   \end{align*}
   This completes the proof. 
   
\end{proof}

Let $\gamma \in \mathcal{F}'$. 
Let $\mathcal{S}$ be the set of all rectangles in $\mathbb{C}$ whose sides are parallel to the coordinate axes.
Put $M = \log\log\gamma(T)$. 

Define $\mathcal{S}_M \subset \mathcal{S}$ as the set of all $\mathcal{R}$ such that  
\[
\mathcal{R} \subset \mathcal{R}_M := [-M, M]\times i[-M, M]. 
\]
For $\mathcal{R} \notin \mathcal{S}_{M}$, 
\begin{align*}
    &\mathbb{P}_{\mathcal{I}_{\gamma}(T)}(\log \zeta(\sigma + i\gamma(t)) \in \mathcal{R}) \\
    &= \mathbb{P}_{\mathcal{I}_{\gamma}(T)}(\log \zeta(\sigma + i\gamma(t)) \in \mathcal{R} \cap \mathcal{R}_M) + \mathbb{P}_{\mathcal{I}_{\gamma}(T)}(\log \zeta(\sigma + i\gamma(t)) \in \mathcal{R} \setminus \mathcal{R}_M)
\end{align*}
holds. 

By Lemma~\ref{higher general} with $k = \lfloor M \rfloor$, we deduce 
\begin{align*}
    \mathbb{P}_{\mathcal{I}_{\gamma}(T)}(\log \zeta(\sigma + i\gamma(t)) \in \mathcal{R} \setminus \mathcal{R}_M) 
    &\ll \sum_{\alpha \in \{0, \frac{\pi}{2}, \pi, \frac{3\pi}{2}\}}\mathbb{P}_{\mathcal{I}_{\gamma}(T)} (\Re e^{-i\alpha}\log\zeta(\sigma + i\gamma(t)) > M) \\
    &\ll (\log\gamma(T))^{-2}. 
\end{align*}
Hence, 
\[
\mathbb{P}_{\mathcal{I}_{\gamma}(T)} (\log\zeta(\sigma + i\gamma(t)) \in \mathcal{R}) = \mathbb{P}_{\mathcal{I}_{\gamma}(T)} (\log\zeta(\sigma + i\gamma(t)) \in \mathcal{R} \cap \mathcal{R}_{M}) + O\left((\log\gamma(2T))^{-2}\right)
\]
holds. 
Applying \cite[Lemma~5.2]{EIM}, we can also show that 
\[
\mathbb{P}(\log\zeta(\sigma, X) \in \mathcal{R}) = \mathbb{P}(\log\zeta(\sigma, X) \in \mathcal{R} \cap \mathcal{R}_M) + O\left((\log\gamma(2T))^{-2} \right).
\]
Combining these arguments, we obtain 
\begin{align*}
    &D_{\sigma, \gamma}(T) = \sup_{\mathcal{R} \in \mathcal{S}} \left|\mathbb{P}_T(\log\zeta(\sigma + i\gamma(t)) \in \mathcal{R}) - \mathbb{P}(\log\zeta(\sigma, X) \in \mathcal{R})\right| \\
    &= \sup_{\mathcal{R} \in \mathcal{S}} \left|\mathbb{P}_{\mathcal{I}_{\gamma}(T)}(\log\zeta(\sigma + i\gamma(t)) \in \mathcal{R}) - \mathbb{P}(\log\zeta(\sigma, X) \in \mathcal{R})\right| + O\left((\log\gamma(T))^{-8} \right) \\
    &= \sup_{\mathcal{R} \in \mathcal{S}} \left|\mathbb{P}_{\mathcal{I}_{\gamma}(T)}(\log\zeta(\sigma + i\gamma(t)) \in \mathcal{R} \cap \mathcal{R}_M) - \mathbb{P}(\log\zeta(\sigma, X) \in \mathcal{R} \cap \mathcal{R}_M)\right|\\
    & \quad + O\left((\log\gamma(2T))^{-2} + (\log\gamma(T))^{-8} \right) \\
    &= \sup_{\mathcal{R} \in \mathcal{S}_M} \left|\mathbb{P}_{\mathcal{I}_{\gamma}(T)}(\log\zeta(\sigma + i\gamma(t)) \in \mathcal{R}) - \mathbb{P}(\log\zeta(\sigma, X) \in \mathcal{R} )\right| + O\left((\log\gamma(T))^{-2} \right) .
\end{align*}

In addition, we define 
\[
D_{\sigma, \gamma, Y}(T) = \sup_{\mathcal{R} \in \mathcal{S}_{2M}} \left|\mathbb{P}_{\mathcal{I}_{\gamma}(T)}(P_Y(\sigma + i\gamma(t)) \in \mathcal{R}) - \mathbb{P}(\log\zeta(\sigma, X) \in \mathcal{R} )\right|. 
\]
Putting $\varepsilon = (\log\gamma(T))^{-2\sigma}$, we have 
\begin{equation} \label{easy}
\log\zeta(\sigma + i\gamma(t)) = P_Y(\sigma + i\gamma(t)) + O(\varepsilon)
\end{equation}
for $t \in \mathcal{I}_\gamma(T)$. 
For $\mathcal{R} = (a_1, b_1) \times i(a_2, b_2) \in \mathcal{S}_M$, denote $R^{+\varepsilon} = (a_1-\varepsilon, b_1 +\varepsilon) \times i(a_2 - \varepsilon, b_2 + \varepsilon)$, and $R^{-\varepsilon} = (a_1+\varepsilon, b_1 -\varepsilon) \times i(a_2 + \varepsilon, b_2 - \varepsilon)$. 
Then, with \eqref{easy} one can obtain
\begin{equation*}
    \mathbb{P}_{\mathcal{I}_{\gamma}(T)}(P_Y(\sigma + i\gamma(t)) \in \mathcal{R}^{-\varepsilon}) \le 
    \mathbb{P}_{\mathcal{I}_{\gamma}(T)}(\log\zeta(\sigma + i\gamma(t)) \in \mathcal{R})
    \le \mathbb{P}_{\mathcal{I}_{\gamma}(T)}(P_Y(\sigma + i\gamma(t)) \in \mathcal{R}^{+\varepsilon}). 
\end{equation*}
If $T$ is sufficiently large, then $\mathcal{R}^{\pm\varepsilon} \in \mathcal{S}_{2M}$. 
Therefore, by the definition of $D_{\sigma, \gamma, Y}$, and \cite[Proposition~3.2]{EIM}, for $\mathcal{R} \in \mathcal{S}_M$, we have
\begin{align*}
|\mathbb{P}_{\mathcal{I}_{\gamma}(T)}(\log\zeta(\sigma + i\gamma(t)) \in \mathcal{R}) - \mathbb{P}(\log\zeta(\sigma, X) \in \mathcal{R})| \ll D_{\sigma, \gamma, Y}(T) + \varepsilon M. 
\end{align*}
Hence, $D_{\sigma, \gamma}(T)$ is bounded by
\[
D_{\sigma, \gamma}(T) \ll D_{\sigma, \gamma, Y}(T) + \varepsilon M + (\log\gamma(T))^{-2} . 
\]

To estimate $D_{\sigma, \gamma, Y}$, we introduce the Beurling--Selberg function (cf. \cite{EIM} and \cite{LLR}). 
Let
\begin{align*}
G(u) =  \frac{2u}{\pi} + \frac{2(1-u)u}{\tan(\pi u)}, \quad  K(x) = \left(\frac{\sin\pi x}{\pi x} \right)^2, \quad f_{a, b}(u) = \frac{e^{-2\pi i u a } - e^{-2\pi i u b}}{2}
\end{align*}
for $a, b \in \mathbb{R}$.  
Let $\bm{1}_A$ denote the indicator function of a set $A \subset \mathbb{R}$. 
Then, the indicator of a set $\mathcal{R} \in \mathcal{S}$ is represented as $\bm{1}_\mathcal{R}(z) = \bm{1}_{(a_1, b_1)}(u)\bm{1}_{(a_2, b_2)}(v)$ for $z = u + iv$. 

\begin{lemma}[{\cite[Section~5]{EIM}}] \label{beuring selberg}
    Let $\mathcal{R} = (a_1, b_1) \times i(a_2, b_2) \in \mathcal{S}$, and $L > 0$. 
    Then, 
    \begin{align*} \label{1 appro}
        \bm{1}_{\mathcal{R}}(z) &= W_{L, \mathcal{R}}(z) \\ &+ O(K(L(x-a_1)) + K(L(x-b_1)) + K(L(x-a_2)) + K(L(x-b_2))), 
    \end{align*}
    where
    \begin{align*}
        W_{L, \mathcal{R}}(z) = \frac12 &\Re\int_0^L\int_0^L G\left(\frac{u}{L} \right)G\left(\frac{v}{L} \right) \\ 
        &\times \left(e^{2\pi i(ux-vy)}f_{a_1, b_1}(u)\overline{f_{a_2, b_2}(v)} - e^{2\pi i(ux +vy)}f_{a_1, b_1}(u)f_{a_2, b_2}(v) \right) \frac{du}{u}\frac{dv}{v}.
    \end{align*}
\end{lemma}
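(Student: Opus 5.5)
The plan is to reduce the two–dimensional statement to the classical one–dimensional Beurling--Selberg (Vaaler) approximation of the signum function, and then to take a product. The one–dimensional input I will use, exactly as in \cite{LLR} and \cite[Section~5]{EIM}, is: for $L>0$ and $x\in\mathbb{R}$,
\[
\mathrm{sgn}(x) = \Im \int_0^L G\left(\frac{u}{L}\right) e^{2\pi i u x}\,\frac{du}{u} + O\bigl(K(Lx)\bigr).
\]
This comes from Vaaler's extremal majorant/minorant construction; the function $G$ is precisely the Fourier multiplier of Vaaler's polynomial. I would quote it rather than reprove it.

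\textbf{Step 1: one interval.} Since $\bm{1}_{(a,b)}(x) = \frac12\bigl(\mathrm{sgn}(x-a) - \mathrm{sgn}(x-b)\bigr)$ away from the endpoints (which form a null set, and the $K$ error terms absorb them), subtracting the two signum approximations gives
\[
\bm{1}_{(a,b)}(x) = \Im \int_0^L G\left(\frac{u}{L}\right) e^{2\pi i u x} f_{a,b}(u)\,\frac{du}{u} + O\bigl(K(L(x-a)) + K(L(x-b))\bigr).
\]
Indeed $e^{2\pi i u(x-a)} - e^{2\pi i u(x-b)} = 2e^{2\pi iux} f_{a,b}(u)$, and the factor $2$ cancels the $\frac12$. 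Call the main term $\Im A(x)$. Since $0\le \bm{1}\le 1$ and $0\le K\le 1$, this identity also shows that $\Im A(x) = O(1)$ uniformly in $x$, $a$, $b$ and $L$.

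\textbf{Step 2: the product.} For $z = x+iy$ I write $\bm{1}_{\mathcal{R}}(z) = \bm{1}_{(a_1,b_1)}(x)\,\bm{1}_{(a_2,b_2)}(y)$ and apply Step 1 in each variable, obtaining main terms $\Im A(x)$ and $\Im B(y)$, where $B$ is built from $f_{a_2,b_2}$. Expanding the product gives
\[
(\Im A + O(E_1))(\Im B + O(E_2)) = \Im A\,\Im B + O(E_1 + E_2),
\]
because $\Im A$, $\Im B$, $E_1$ and $E_2$ are all bounded; here $E_1, E_2$ denote the sums of the $K$-terms in each variable. For the main term I use the identity $\Im A\,\Im B = \frac12\Re\bigl(A\overline{B} - AB\bigr)$. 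Writing $A\overline{B}$ and $AB$ as double integrals over $[0,L]^2$ produces exactly the kernels $e^{2\pi i(ux-vy)} f_{a_1,b_1}(u)\overline{f_{a_2,b_2}(v)}$ and $e^{2\pi i(ux+vy)} f_{a_1,b_1}(u) f_{a_2,b_2}(v)$. Together with the weight $G(u/L)G(v/L)\,\frac{du}{u}\frac{dv}{v}$, this is $W_{L,\mathcal{R}}(z)$. The $K$-terms in the second variable are evaluated at $y$, which is what the statement's $K(L(x-a_2))$, $K(L(x-b_2))$ should be read as.

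\textbf{Main obstacle.} The only nontrivial point is the one–dimensional signum approximation, including the integrability of $G(u/L)/u$ near $u=0$ (here $G(0)$ is finite and the $1/u$ singularity is handled through the $\Im$). I will take this from the literature. Everything else is bookkeeping: keeping the normalisations of $f_{a,b}$ consistent so that the constant $\frac12$ comes out exactly, and checking that the main terms are uniformly bounded so the cross terms are absorbed into the error.
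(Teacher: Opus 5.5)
Your argument is correct and matches the standard derivation behind the citation. The paper gives no proof of its own but quotes \cite[Section~5]{EIM}, which (following \cite{LLR}) derives the interval version from Vaaler's approximation of $\mathrm{sgn}$, as in your Step~1, and passes to rectangles via $\Im A\,\Im B=\frac12\Re(A\overline{B}-AB)$, as in your Step~2. You are also right that the last two error terms should read $K(L(y-a_2))$ and $K(L(y-b_2))$, and that $f_{a,b}(u)=O(u)$ makes the interval integrals absolutely convergent, which justifies writing the product as a double integral.
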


For $W_{L, \mathcal{R}}$, we deduce the following proposition. 

\begin{proposition} \label{W general}
    Let $\gamma \in \mathcal{F}'$, $1/2 < \sigma < 1$ and $L = c_3(\log\gamma(T))^{\sigma}$, where $c_3$ is a small positive constant. 
    Then, 
    \[
    \frac{1}{T} \int_T^{2T} W_{L, \mathcal{R}}(P_Y(\sigma + i\gamma(t)))dt = \mathbb{E}[W_{L, \mathcal{R}}(\log\zeta(\sigma, X))] + O\left((\log\gamma(T))^{-2} \right)
    \]
    for $\mathcal{R} \in \mathcal{S}_{2M}$, and $w = u + iv$. 
    
\end{proposition}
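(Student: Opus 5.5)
Since $W_{L,\mathcal{R}}(z)$ is a double integral of the exponentials $e^{2\pi i(ux\mp vy)}$ with $z = x+iy$, the plan is to swap the $t$-integral with the $(u,v)$-integral. Each exponential can be written as $\exp(z_1 P_Y + z_2\overline{P_Y})$ with $|z_1|,|z_2|\ll L$: we have $x = (P+\overline P)/2$ and $y=(P-\overline P)/(2i)$, so $z_1 = \pi i(u \mp i v)$ and $z_2 = \pi i(u\pm iv)$ up to sign conventions. This gives
\[
\frac1T\int_T^{2T} W_{L,\mathcal{R}}(P_Y(\sigma+i\gamma(t)))\,dt = \frac12\Re\int_0^L\!\!\int_0^L G\Bigl(\frac uL\Bigr)G\Bigl(\frac vL\Bigr)\bigl(\cdots\bigr)\,\frac{du}{u}\frac{dv}{v},
\]
where the bracket contains the mean values $\frac1T\int_T^{2T}\exp(z_1P_Y+z_2\overline{P_Y})\,dt$. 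We need to control $|f_{a,b}(u)|/u$ near $u=0$. Since $|f_{a,b}(u)|\ll \min(1, u|b-a|)\ll \min(1,uM)$ for $\mathcal{R}\in\mathcal{S}_{2M}$ and $G$ is bounded, the weights satisfy $\int_0^L |G(u/L)f(u)|\,du/u \ll \log(LM)$. Any error that is uniform in $(u,v)$ is therefore multiplied by at most $(\log(LM))^2$.

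The second step is to compare each mean value with its random analogue using Proposition~\ref{prop1 general}. Split $[T,2T]$ into $A_T$ and its complement. On the complement the integrand has modulus $1$, because $z_1P+z_2\overline P$ is purely imaginary. Lemma~\ref{large deviation general} then bounds this contribution by $\exp(-c_1V^{1/(1-\sigma)}(\log V)^{\sigma/(1-\sigma)})$. On $A_T$ we apply Proposition~\ref{prop1 general}, which requires $|z_j|\le d_1(V\log V)^{\sigma/(1-\sigma)}$. Since $L=c_3(\log\gamma(T))^\sigma$, we choose $V$ so that $V^{1/(1-\sigma)}(\log V)^{\sigma/(1-\sigma)} \asymp c_4\log\gamma(T)/\log\log\gamma(T)$; more precisely, we pick $V$ with $(V\log V)^{\sigma/(1-\sigma)}\asymp (\log\gamma(T))^\sigma$. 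With $c_3$ small this makes $|z_j|\le 2\pi L$ admissible. The exponential error term is then $\exp(-d_2 (\log\gamma(T))^{1-\epsilon'})$-type small, certainly $\ll(\log\gamma(T))^{-10}$, provided $V^{1/(1-\sigma)}(\log V)^{\sigma/(1-\sigma)}\gg \log\log\gamma(T)$, which clearly holds.

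The main obstacle is the first error term in Proposition~\ref{prop1 general}, namely $\frac{1}{T\tilde\gamma(T)}((V\log V)^{\sigma/(1-\sigma)}Y)^{V^{1/(1-\sigma)}(\log V)^{\sigma/(1-\sigma)}}$, together with the range condition \eqref{Y range general}. With $Y=(\log\gamma(T))^{12/(2\sigma-1)}$ and exponent $N:=V^{1/(1-\sigma)}(\log V)^{\sigma/(1-\sigma)}$, the term is $\exp(O(N\log\log\gamma(T)))/(T\tilde\gamma(T))$. By choosing $N\asymp \log\gamma(T)/\log\log\gamma(T)$ with a small constant, the numerator is at most $\gamma(T)^{\delta}$. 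We then need $T\tilde\gamma(T)\gg\gamma(T)^{2\delta}$. This follows from (F3): $T\gamma'(T)\ge\alpha\gamma(T)$, and by monotonicity of $\gamma'$ (F2) we get $\tilde\gamma(T)\gg\gamma(T)/T$ up to comparing $\gamma'(2T)$ with $\gamma'(T)$. In the increasing case $\tilde\gamma=\gamma'(T)$. In the decreasing case $\gamma'(2T)\ge \alpha\gamma(2T)/(2T)\ge\alpha\gamma(T)/(2T)$. Hence $T\tilde\gamma(T)\gg\gamma(T)$, which yields both \eqref{Y range general} (since $\log Y\asymp\log\log\gamma(T)$ while $\log(T\tilde\gamma(T))/N\gg\log\log\gamma(T)$ with a large constant) and the error bound $\gamma(T)^{-1/2}$. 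We must also check that $(V\log V)^{\sigma/(1-\sigma)}$ is of order $N^{\sigma}/(\log V)^{\ldots}$, i.e.\ that $L\ll (V\log V)^{\sigma/(1-\sigma)}$. The relation $(V\log V)^{\sigma/(1-\sigma)}\asymp N^\sigma(\log N)^{\sigma}$ gives $\asymp(\log\gamma(T))^\sigma$, so a small $c_3$ suffices.

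Finally we replace $\mathbb{E}[\exp(z_1P_Y(\sigma,X)+z_2\overline{P_Y(\sigma,X)})]$ by the corresponding characteristic function of $\log\zeta(\sigma,X)$. The difference is $\ll |z|\,\mathbb{E}|\log\zeta(\sigma,X)-P_Y(\sigma,X)| \ll L\,Y^{1/2-\sigma}$ by orthogonality of the $X(n)$, and $Y^{1/2-\sigma}=(\log\gamma(T))^{-6}$. Integrating all the uniform errors against the weights $\ll(\log LM)^2$ gives a total of $O((\log\gamma(T))^{-2})$, which reassembles into $\mathbb{E}[W_{L,\mathcal{R}}(\log\zeta(\sigma,X))]$ and proves the proposition.
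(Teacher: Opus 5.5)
Your proposal is correct and follows essentially the same route as the paper. Both arguments rewrite the mean of $W_{L,\mathcal{R}}(P_Y)$ through the characteristic function $\Lambda_{\sigma,\gamma,T}$, take $V\asymp(\log\gamma(T))^{1-\sigma}/\log\log\gamma(T)$ so that Lemma~\ref{large deviation general} and Proposition~\ref{prop1 general} apply for $|w|\ll L$, replace $P_Y(\sigma,X)$ by $\log\zeta(\sigma,X)$ at cost $\ll L\,Y^{1/2-\sigma}$, and integrate the uniform error against the weights. Your version is slightly more explicit than the paper's in two places: you derive $T\tilde\gamma(T)\gg\gamma(T)$ from (F3), which is needed to control the $1/(T\tilde\gamma(T))$ term and to check \eqref{Y range general} but is left implicit in the paper, and you use the weight bound $\log(LM)$ where the paper uses the cruder $L(b-a)$.
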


\begin{proof}
    Define $\langle z, w \rangle := \Re(z)\Re(w) + \Im(z)\Im(w) = \frac{1}{2}(z\overline{w} + \overline{z}w)$, and 
    \begin{align*}
        \Lambda_{\sigma, \gamma, T}(w) &= \frac{1}{T}\int_{T}^{2T} \exp\left(i\langle P_Y(\sigma + i\gamma(t)), w \rangle \right)dt \\
        &= \frac{1}{T} \int_T^{2T}\exp \left(\frac{i\overline{w}}{2}P_Y(\sigma + i\gamma(t)) + \frac{iw}{2} \overline{P_Y(\sigma + i\gamma(t))}\right)dt. 
    \end{align*}
By the definition of $W_{L, \mathcal{R}}$, the left-hand side of the assertion can be written as
\begin{align*}
     &\frac{1}{T} \int_T^{2T} W_{L, \mathcal{R}}(P_Y(\sigma + i\gamma(t)))dt \\ 
     &= \frac12 \Re\int_0^L\int_0^L G\left(\frac{u}{L} \right)G\left(\frac{v}{L} \right) \\
        &\times \left(\Lambda_{\sigma, \gamma, T}(2\pi \overline{w}) f_{a_1, b_1}(u)\overline{f_{a_2, b_2}(v)} - \Lambda_{\sigma, \gamma, T}(2\pi w)f_{a_1, b_1}(u)f_{a_2, b_2}(v) \right) \frac{du}{u}\frac{dv}{v}.
\end{align*}
Let $V = c_4 (\log\gamma(T))^{1-\sigma}(\log\log\gamma(T))^{-1}$ for a sufficiently small constant $c_4 = c_4(\sigma) \le 1/2$. 
By Lemma~\ref{large deviation general}, we derive 
\begin{align*}
    \Lambda_{\sigma, \gamma, T}(2\pi w) &= \frac{1}{T}\int_{A_T}\exp\left(i\pi \overline{w}P_Y(\sigma + i\gamma(t)) + i \pi w \overline{P_Y(\sigma + i\gamma(t))}  \right)dt \\
    &+O\left(\exp\left(-c_1 V^{\frac{1}{1-\sigma}}(\log V)^{\frac{\sigma}{1-\sigma}}\right) \right),  
\end{align*}
and we have 
\[
|i\pi w| \le d_1(V\log V)^{\frac{\sigma}{1-\sigma}}, 
\]
where $d_1$ is given by Proposition~\ref{prop1 general}.

Define the characteristic function of $\log\zeta(\sigma, X)$ as
\[
\Lambda_{\sigma}(w) = \mathbb{E}[\exp(i\langle \log\zeta(\sigma, X), w \rangle)]
\]
for $w \in \mathbb{C}$. 
From Proposition~\ref{prop1 general} and \cite[Lemma~3.4]{EIM}, for $|u|, |v| \le L$, it can be deduced that 
\begin{equation} \label{Lambda} 
\Lambda_{\sigma, \gamma, T}(2\pi w) = \Lambda_{\sigma}(2\pi w) + E_1, 
\end{equation}
where 
\begin{align*}
    E_1 &\ll \exp\left(-c_1 V^{\frac{1}{1-\sigma}}(\log V)^{\frac{\sigma}{1-\sigma}}\right) + \frac{1}{T\tilde{\gamma}(T)}\left( (V\log V)^{\frac{\sigma}{1-\sigma}} Y\right)^{V^\frac{1}{1-\sigma}(\log V)^{\frac{\sigma}{1-\sigma}}} \\
    &+\exp\left(-d_2 V^{\frac{1}{1-\sigma}}(\log V)^{\frac{\sigma}{1-\sigma}}\right) + \frac{L}{Y^{\sigma - \frac12}} \\
    &\ll (\log \gamma(T))^{-5}. 
\end{align*}

Substituting \eqref{Lambda} into the above integral, 
\begin{align*}
     &\frac{1}{T} \int_T^{2T} W_{L, \mathcal{R}}(P_Y(\sigma + i\gamma(t)))dt \\
     &=\frac12 \Re\int_0^L\int_0^L G\left(\frac{u}{L} \right)G\left(\frac{v}{L} \right) \\
        &\times \left(\Lambda_{\sigma}(2\pi \overline{w}) f_{a_1, b_1}(u)\overline{f_{a_2, b_2}(v)} - \Lambda_{\sigma}(2\pi w)f_{a_1, b_1}(u)f_{a_2, b_2}(v) \right) \frac{du}{u}\frac{dv}{v} + E_2 \\
    &= \mathbb{E}[W_{L, \mathcal{R}}(\log\zeta(\sigma, X))] + E_2, 
\end{align*}
where 
\[
E_2 \ll L^2(\log\gamma(T))^{-5}(b_1 - a_1)(b_2 - a_2) \ll (\log\gamma(T))^{-2}. 
\]
Thus, we get the conclusion. 
\end{proof}

\begin{proof}[Proof of Theorem~\ref{main general}]
Let $\gamma \in \mathcal{F}'$. 
Let $\mathcal{R} = (a_1, b_1) \times i(a_2, b_2) \in \mathcal{S}_{2M}$. 
Recall that
\[
D_{\sigma, \gamma}(T) \ll D_{\sigma, \gamma, Y}(T) + (\log\gamma(T))^{-2\sigma}\log\log\gamma(T) + (\log\gamma(T))^{-2}. 
\]
Using Proposition~\ref{W general}, we see that
\begin{align*}
    \mathbb{P}_{\mathcal{I}_{\gamma}(T)}(P_Y(\sigma + i\gamma(t)) \in \mathcal{R}) 
    &= \mathbb{P}_{T}(P_Y(\sigma + i\gamma(t)) \in \mathcal{R}) +O((\log \gamma(T))^{-8})\\
    &= \frac{1}{T}\int_{T}^{2T}W_{L, \mathcal{R}}(P_Y(\sigma + i\gamma(t)))dt + +O((\log\gamma(T))^{-8}) \\
    &+O(I_T(a_1) + I_T(b_1) + J_T(a_2) + J_T(b_2)), 
\end{align*}
where $I_T$, and $J_T$ are given by 
\begin{align*}
    I_T(\xi) = \frac{1}{T}\int_T^{2T}K(L(\Re P_Y(\sigma + i\gamma(t)) - \xi))dt, \\
    J_T(\xi) = \frac{1}{T}\int_T^{2T}K(L(\Im P_Y(\sigma + i\gamma(t)) - \xi))dt. 
\end{align*}
By the fact 
\[
K(Lx) = \frac{2}{L^2} \Re \int_0^{L} (L-u)e^{2\pi i xu}du 
\]
(cf. \cite[(6.10)]{LLR}), the same argument as in \cite[Section~5]{EIM} and \eqref{Lambda}, we have 
\begin{align*}
    I_T(\xi) = \frac{2}{L^2} \Re\int_{0}^L(L-u)e^{-2\pi i \xi u} \Lambda_{\sigma}(2\pi u)du + O\left((\log\gamma(T))^{-4} \right), \\
    J_T(\xi) = \frac{2}{L^2} \Re\int_{0}^L(L-v)e^{-2\pi i \xi v} \Lambda_{\sigma}(2\pi iv)dv + O\left((\log\gamma(T))^{-4} \right). 
\end{align*}
Moreover, from \cite[(3.5)]{EIM}, $I_T$ and $J_T$ are evaluated as  
\[
I_T(\xi), J_T(\xi) \ll L^{-1} + (\log \gamma(T))^{-4}. 
\]
Therefore, we obtain 
\[
\mathbb{P}_T(P_Y(\sigma + i\gamma(t)) \in \mathcal{R}) = \frac{1}{T}\int_T^{2T}W_{L, \mathcal{R}}(P_Y(\sigma +i\gamma(t))dt + O\left(L^{-1} + (\log\gamma(T))^{-4} \right). 
\]
Similarly, we deduce that 
\[
\mathbb{P}(\log\zeta(\sigma, X) \in \mathcal{R}) = \mathbb{E}[W_{L, \mathcal{R}}(\log\zeta(\sigma, X))] + O\left(L^{-1} + (\log\gamma(T))^{-4} \right). 
\]
By Proposition~\ref{W general}, we have
\begin{align*}
    &\mathbb{P}_T(P_Y(\sigma + i\gamma(t)) \in \mathcal{R}) - \mathbb{P}(\log\zeta(\sigma, X) \in \mathcal{R}) \\
    &=\frac{1}{T}\int_T^{2T}W_{L, \mathcal{R}}(P_Y(\sigma +i\gamma(t))dt - \mathbb{E}[W_{L, \mathcal{R}}(\log\zeta(\sigma, X))] + O\left(L^{-1} + (\log\gamma(T))^{-4} \right) \\ 
    &\ll (\log\gamma(T))^{-2} + L^{-1} \ll (\log\gamma(T))^{-\sigma}. 
\end{align*}
Therefore, by the definition of $D_{\sigma, \gamma, Y}(T)$, the above evaluation yields the conclusion. 
\end{proof}

\section{Preliminaries for Theorem~\ref{main1}}
Throughout the paper, we assume that $1<\alpha_1<\cdots<\alpha_r$. 
Define 
\[
\log\underline{\zeta}(s + i\underline{\log}\tau) = (\log\zeta(s + i(\log \tau)^{\alpha_1}), \dots, \log\zeta(s + i(\log\tau)^{\alpha_r})).
\] 

Let $\mathcal K$ be a compact subset satisfying the assumptions of Theorem~\ref{main1}. 
We put $\sigma' = (x_\Phi(\alpha_1) + \min_{s \in \mathcal{K}} \Re(s))/2$, $\sigma_1 = (\sigma' + \min_{s \in \mathcal{K}} \Re(s))/2$ and $\sigma_2 = (1 + \max_{s \in \mathcal{K}} \Re(s))/2$.
Then, we define the rectangular region $\mathcal{R}$ by 
\begin{equation} \label{definition of R}
\mathcal{R} = (\sigma_1,\ \sigma_2) \times i \left( \min_{s \in \mathcal{K}} \Im(s) - 1/2,\ \max_{s \in \mathcal{K}}\Im(s) + 1/2 \right).
\end{equation}

There exists a sequence of compact subsets $K_\ell$ of $\mathcal{R}$, $\ell=1, 2, \dots$ satisfying the following properties: 
\begin{itemize} 
 \item $\mathcal{R} = \bigcup_{\ell=1}^{\infty} K_\ell$, 
 \item $K_\ell \subset K_{\ell+1}$ for any $\ell \in \mathbb{N}$,
 \item For all compact subset $K$ of $\mathcal{R}$, there exists $\ell \in \mathbb{N}$ such that $K \subset K_\ell$ 
\end{itemize}
(see \cite[Chapter VII, 1.2 Proposition]{Co}). 
Let $\mathcal{H}(\mathcal{R})$ be the set of all holomorphic functions on $\mathcal{R}$. 
For $g_1, g_2 \in \mathcal{H}(\mathcal{R})$, let $d_\ell(g_1, g_2) = \sup_{s \in K_\ell} |g_1(s) - g_2(s) |$ and put  
\[
d(g_1, g_2) = \sum_{\ell=1}^{\infty} \frac{1}{2^\ell} \frac{d_\ell(g_1, g_2)}{1 + d_\ell(g_1, g_2)}.
\]
Then $d$ defines a metric on $\mathcal H(\mathcal R)$. 
Moreover, the induced topology is the topology of compact convergence. 
Similarly, define $\underline{d}(\underline{g}_1, \underline{g}_2) = \max_{1 \le j \le r} d(g_{1j}, g_{2j})$, 
where $\underline{g}_k = (g_{k1}, \dots, g_{kr}) \in \mathcal{H}^r(\mathcal{R})$ for $k = 1, 2$. 
Then $\underline{d}$ is a metric on $\mathcal{H}^r(\mathcal{R})$. 

Therefore, $\log\underline\zeta(s,X)$ defines an $\mathcal H^r(\mathcal R)$-valued random element. 
We define probability measures on $(\mathcal{H}^r(\mathcal{R}), \mathcal{B}(\mathcal{H}^r(\mathcal{R})))$ by 
\begin{equation*}
\mathcal{Q}_T(A) = \frac{1}{T} \mathrm{meas} \left\{\tau \in [T, 2T] : \log\underline{\zeta}(s + i\underline{\log}(\tau)) \in A \right\}, 
\end{equation*}
\begin{equation*}
\mathcal{Q}(A) = \mathbf{m}^r \left\{X \in \Omega^r : \log\underline{\zeta}(s, X) \in A \right\}
\end{equation*} 
for $A \in \mathcal{B}(\mathcal{H}^r(\mathcal{R}))$.

\section{A limit theorem for Theorem~\ref{main1}}

The goal of this section is to prove the following proposition.

\begin{proposition} \label{prop uni}
The probability measure $\mathcal{Q}_T$ converges weakly to $\mathcal{Q}$ as $T \to \infty$.
\end{proposition}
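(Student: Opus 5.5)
Our approach follows the standard Bagchi route, adapted to the shifts $(\log\tau)^{\alpha_j}$. Fix $\sigma' $ as above; all real parts in $\mathcal{R}$ exceed $\sigma_1>\sigma'>x_\Phi(\alpha_1)$. We proceed in three steps.

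\textbf{Step 1 (limit for the truncated Euler products).} For a finite set of primes $P_N=\{p\le N\}$, we consider the map
\[
\tau\mapsto\bigl((p^{-i(\log\tau)^{\alpha_j}})_{p\le N}\bigr)_{1\le j\le r}\in\prod_{j}\prod_{p\le N}S^1 .
\]
We show that its distribution on $[T,2T]$ tends to Haar measure, via Weyl's criterion. A nontrivial character gives the integral
\[
\frac1T\int_T^{2T}\exp\Bigl(-i\sum_{j}c_j(\log\tau)^{\alpha_j}\Bigr)d\tau,\qquad c_j=\sum_{p\le N}k_{j,p}\log p .
\]
Since $\log 2,\log3,\dots$ are $\mathbb{Q}$-linearly independent, some $c_j\neq0$; let $j_0$ be the largest such index. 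The phase $\phi(\tau)$ then has $\phi'(\tau)\asymp(\log\tau)^{\alpha_{j_0}-1}/\tau$ for large $\tau$, which is monotone. Substituting $u=\log\tau$ turns the integral into $\int e^{u}e^{-i\phi}du$. Integrating by parts, or applying the first derivative test after $u$-substitution, gives
\[
\ll \frac{1}{T}\cdot\frac{T}{(\log T)^{\alpha_{j_0}-1}}\to0,
\]
and here we use $\alpha_j>1$. By continuous mapping, the truncated random Euler products
\[
\log\zeta_N(s+i(\log\tau)^{\alpha_j})=-\sum_{p\le N}\log\bigl(1-p^{-s-i(\log\tau)^{\alpha_j}}\bigr)
\]
converge jointly in law in $\mathcal H^r(\mathcal R)$ to $\log\underline\zeta_N(s,X)$.

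\textbf{Step 2 (the random side).} We show that $\log\underline\zeta_N(s,X)\to\log\underline\zeta(s,X)$ almost surely in $\mathcal H^r(\mathcal R)$. This follows from the $L^2$ convergence of $\sum X(p)p^{-s}$ for $\Re s>1/2$, Kolmogorov's theorem and a Cauchy-integral argument; it is standard.

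\textbf{Step 3 (approximation in mean).} This is the main obstacle. We need
\[
\lim_{N\to\infty}\limsup_{T\to\infty}\frac1T\int_T^{2T}\underline d\bigl(\log\underline\zeta(s+i\underline\log\tau),\log\underline\zeta_N(s+i\underline\log\tau)\bigr)d\tau=0 .
\]
Then Billingsley's approximation theorem (Theorem 3.2) gives $\mathcal Q_T\Rightarrow\mathcal Q$. First, we must discard $\tau$ for which $(\log\tau)^{\alpha_j}$ lies near an ordinate of a zero with $\beta>\sigma'$, since there $\log\zeta$ is not holomorphic on the shifted rectangle. Zeros with ordinates in $[(\log T)^{\alpha},(\log 2T)^{\alpha}]$ number at most $N(\sigma',2(\log T)^{\alpha})\ll(\log T)^{\alpha(\Phi(\sigma')+\varepsilon)}$. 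Each such ordinate, thickened by $O(\log\log T)$ say, pulls back to a $\tau$-set of measure
\[
\asymp\frac{T\log\log T}{(\log T)^{\alpha-1}},
\]
because $d\tau/d\gamma=\tau/(\alpha(\log\tau)^{\alpha-1})$. The total exceptional proportion is therefore
\[
\ll(\log T)^{\alpha\Phi(\sigma')-\alpha+1+\varepsilon}\log\log T=o(1),
\]
which holds exactly because $\Phi(\sigma')<1-1/\alpha$. This is where $x_\Phi(\alpha_1)$ enters, with $\alpha\ge\alpha_1$ and $\Phi$ non-increasing.

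Off this set we apply Lemma~\ref{appro} (with $T$ replaced by the range of $(\log\tau)^{\alpha_j}$) together with a short Dirichlet polynomial approximation. This reduces matters to showing that
\[
\frac1T\int_T^{2T}\sup_{s\in K_\ell}\Bigl|\sum_{N<n\le y}\frac{\Lambda(n)}{n^{s+i(\log\tau)^{\alpha_j}}\log n}\Bigr|^2d\tau
\]
tends to $0$ as $N\to\infty$ uniformly in $T$. Here $y$ is a small power of $\log\log T$, and the prime squares and higher powers in $\log\zeta_N$ are absorbed trivially. Via Cauchy's formula on a slightly larger contour, we reduce to a mean square at fixed $s$. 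By the analogue of Lemma~\ref{mean general} with $\tilde\gamma$ replaced by the derivative of $(\log\tau)^\alpha$ (of size $(\log T)^{\alpha-1}/T$), the diagonal contributes $\sum_{n>N}n^{-2\sigma_1}\to0$, and the off-diagonal contributes
\[
\ll y^{4}/(\log T)^{\alpha-1}\to0 .
\]
The delicate points are keeping $y$ small enough for this off-diagonal bound yet large enough for Lemma~\ref{appro}'s error $y^{(1/2-\sigma_0)/2}(\log\gamma)^3$ to vanish. If a fixed power is insufficient, we would instead take $y=(\log T)^{A}$ and control off-diagonal terms by the $\ll\tau/(\log\tau)^{\alpha-1}$ bound above. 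Combining the three steps yields Proposition~\ref{prop uni}.
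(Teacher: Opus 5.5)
Your proposal is correct and is essentially the paper's argument: finite-torus equidistribution via the first derivative test at the largest index $j_0$ (Lemma~\ref{Prob joint}), the random-side limit, and approximation in mean via the zero-density exceptional set (Lemma~\ref{meas log}), Lemma~\ref{appro}, Cauchy's formula and a mean-square bound (Lemma~\ref{appro X}). The differences are cosmetic: you use truncated Euler products and Billingsley's approximation theorem, where the paper uses the smoothed sums $\log\zeta_X$ and Portmanteau with bounded Lipschitz $F$.

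The choice of $y$ is also not actually delicate, so no fallback is needed. Take $y=(\log\log T)^{A}$ with $A>6/(\sigma_0-\frac12)$, where $\sigma_0$ is the parameter of Lemma~\ref{appro}; the paper takes $A=8/(\sigma_0-\frac12)$. This sends the error of Lemma~\ref{appro} to $0$ while $y^4/(\log T)^{\alpha-1}\to0$. The exceptional set, thickened by $\asymp y$ rather than $\log\log T$, still has measure $o(T)$.
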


First, we introduce the following notation.
\begin{itemize}
 \item $|\mathcal{K}| = \max_{s \in \mathcal{K}} \Im(s) - \min_{s \in \mathcal{K}} \Im(s)$.
 \item $\tau_0 = \tau_0(\mathcal{K}) = (\max_{s \in \mathcal{K}}\Im(s) + \min_{s \in \mathcal{K}} \Im(s))/2$. 
 \item $\sigma_0 =\sigma_0(\mathcal{K}) = (x_\Phi(\alpha_1) + \min_{s \in \mathcal{K}} \Re(s))/2$.
 \item For any $\Delta >0$, let
 \begin{equation} \label{exceptional}
 \mathcal{G}_{\sigma_0, \Delta} = \mathbb{R} \setminus \left\{ \left( \bigcup_{\substack{\rho= 
  \beta + i\gamma \\ \beta > \sigma_0}} (\gamma 
  -\tau_0(\mathcal{K})- \Delta, \gamma - \tau_0(\mathcal{K}) + \Delta) \right) 
  \cup   (-\tau_0(\mathcal{K}) - \Delta, -\tau_0(\mathcal{K}) + \Delta ) \right\}.
 \end{equation}

 \item For any $T >0$, let $\mathcal{I}_\mathcal{K}(T) = \mathcal{G}_{\sigma_0, |\mathcal{K}| +1} \cap [T, 2T]$. 
\end{itemize}
 Note that $\mathcal{I}_{\mathcal{K}}(T) \sim T$ as $T \to \infty$.

\begin{lemma} \label{meas log}
    Let $\alpha \ge \alpha_1$. Assume $\Delta = O_\varepsilon((\log T)^\varepsilon)$. 
   Then  
    \[
    \frac{1}{T}\meas\{t \in [T, 2T] : (\log t)^\alpha \notin \mathcal{G}_{\sigma_0, \Delta} \} \ll \Delta (\log T)^{1 -\alpha + \alpha\Phi(\sigma_0) + \varepsilon}     \]
    holds for sufficiently large $T > 0$. 
    In particular, 
    \[
    \meas\{t \in [T, 2T] : (\log t)^\alpha \in \mathcal{G}_{\sigma_0, \Delta} \} \sim T
    \]
    holds for $\sigma_0 > x_\Phi(\alpha_1)$. 
\end{lemma}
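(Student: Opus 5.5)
The plan is to pull the exceptional set back from the $\gamma$-side to the $t$-side through the change of variable $u=(\log t)^\alpha$, exactly as in Lemma~\ref{measure error}, and then count the relevant zeros with the assumed density bound $N(\sigma,T)\ll_\varepsilon T^{\Phi(\sigma)+\varepsilon}$.

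\textbf{Step 1: cover the exceptional set.} Put $U_1=(\log T)^\alpha$ and $U_2=(\log 2T)^\alpha$. Since $(\log 2T)^\alpha-(\log T)^\alpha\asymp(\log T)^{\alpha-1}\to\infty$, we have $U_2-U_1\to\infty$. If $t\in[T,2T]$ and $(\log t)^\alpha\notin\mathcal{G}_{\sigma_0,\Delta}$, then $(\log t)^\alpha$ lies in some interval $(\gamma_\rho-\tau_0-\Delta,\gamma_\rho-\tau_0+\Delta)$ with $\beta_\rho>\sigma_0$. Such a zero must satisfy $\gamma_\rho\in[U_1-\Delta+\tau_0,\,U_2+\Delta+\tau_0]\subset[U_1/2,2U_2]$. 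The extra interval around $-\tau_0$ is irrelevant for large $T$, since $(\log t)^\alpha>0$ is large.

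\textbf{Step 2: measure of one preimage.} The inverse map is $t=\exp(u^{1/\alpha})$, with $dt/du=\frac1\alpha u^{1/\alpha-1}\exp(u^{1/\alpha})$. On the range $u\asymp(\log T)^\alpha$ this derivative is $\ll T(\log T)^{1-\alpha}$. Because $\Delta=O((\log T)^\varepsilon)$ is small compared with $U_1$, the factor $\exp(u^{1/\alpha})$ does not grow beyond $O(T)$ across an interval of length $2\Delta$. Hence the $t$-preimage of each interval, intersected with $[T,2T]$, has measure $\ll \Delta\, T(\log T)^{1-\alpha}$.

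\textbf{Step 3: sum over zeros.} The number of zeros with $\beta>\sigma_0$ and $0<\gamma_\rho\le 2U_2$ is $\ll_\varepsilon U_2^{\Phi(\sigma_0)+\varepsilon}\ll(\log T)^{\alpha\Phi(\sigma_0)+\alpha\varepsilon}$. Dividing by $T$ and renaming $\varepsilon$ gives
\[
\frac1T\meas\{\cdots\}\ll \Delta(\log T)^{1-\alpha+\alpha\Phi(\sigma_0)+\varepsilon},
\]
which is the first claim.

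\textbf{Step 4: the ``in particular''.} Since $\sigma_0>x_\Phi(\alpha_1)$, the definition of $x_\Phi$ as an infimum, together with $\Phi$ being non-increasing, gives $\Phi(\sigma_0)<1-1/\alpha_1$. I would like a strict inequality here, and it follows by choosing some $x$ with $x_\Phi(\alpha_1)<x<\sigma_0$ and $\Phi(x)<1-1/\alpha_1$, so that $\Phi(\sigma_0)\le\Phi(x)$. Then, using $\alpha\ge\alpha_1$,
\[
1-\alpha+\alpha\Phi(\sigma_0)=1-\alpha\bigl(1-\Phi(\sigma_0)\bigr)\le 1-\alpha_1\bigl(1-\Phi(\sigma_0)\bigr)<0 .
\]
The step $\alpha(1-\Phi(\sigma_0))\ge\alpha_1(1-\Phi(\sigma_0))$ uses $1-\Phi(\sigma_0)>0$. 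Taking $\Delta$ fixed, or of size $(\log T)^{\varepsilon}$ with $\varepsilon$ small enough, the bound tends to $0$, and the complement has measure $\sim T$.

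\textbf{Main obstacle.} The only delicate point is Step 2: controlling the local Jacobian uniformly. I need the exponential factor $\exp(u^{1/\alpha})$ to stay within a constant multiple of $T$ on each shifted interval. This holds because $(u\pm\Delta)^{1/\alpha}-u^{1/\alpha}\ll\Delta(\log T)^{1-\alpha}=o(1)$, using $\alpha>1$ and $\Delta\ll(\log T)^\varepsilon$ with $\varepsilon<\alpha-1$.
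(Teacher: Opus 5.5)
Your proposal is correct and follows essentially the same route as the paper: pull each exceptional interval $(\gamma_\rho-\tau_0-\Delta,\gamma_\rho-\tau_0+\Delta)$ back through $t=\exp(u^{1/\alpha})$, bound its preimage in $[T,2T]$ by $\ll \Delta T(\log T)^{1-\alpha}$ via the Jacobian, and count the relevant zeros with $N(\sigma_0,(\log 2T)^\alpha+\Delta)\ll (\log T)^{\alpha\Phi(\sigma_0)+\alpha\varepsilon}$. Your Step~4 is also sound and makes explicit something the paper only asserts: you derive $\Phi(\sigma_0)<1-1/\alpha_1$ from the infimum and the monotonicity of $\Phi$, then use $\alpha\ge\alpha_1$ to make the exponent negative.
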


\begin{proof}
    First, we estimate 
    \[
    \meas\{t : (\log t)^\alpha \in (\gamma - \tau_0 - \Delta, \gamma - \tau_0 + \Delta)\}. 
    \]
    This measure satisfies
    \begin{align*}
        \meas\{t \in (\exp((\gamma - \tau_0 - \Delta)^{\frac{1}{\alpha}}), \exp((\gamma - \tau_0 + \Delta)^{\frac{1}{\alpha}})  \} \ll \frac{\Delta}{\alpha}(\gamma - \tau_0 + c_\gamma)^{\frac{1}{\alpha}-1} \exp\left((\gamma -\tau_0 + c_\gamma)^{\frac{1}{\alpha}}  \right),
    \end{align*}
    where $c_\gamma$ is a constant with $-\Delta < c_\gamma < \Delta$. 
    Hence, we have
    \begin{align*}
        &\frac{1}{T}\meas\{t \in [T, 2T] : (\log t)^\alpha \notin \mathcal{G}_{\sigma_0, \Delta} \}  \\
        &\ll \frac{\Delta}{\alpha} \sum_{\substack{\gamma - \tau_0 \in [(\log T)^\alpha - \Delta, (\log2T)^\alpha + \Delta] \\ \beta > \sigma_0}} (\gamma - \tau_0 + c_\gamma)^{\frac{1}{\alpha}-1} \exp\left((\gamma -\tau_0 + c_\gamma)^{\frac{1}{\alpha}}  \right) \\ 
        &\ll \frac{\Delta T}{\alpha(\log T)^{\alpha-1}} N(\sigma_0, (\log 2T)^\alpha + \Delta) 
        \ll \Delta (\log T)^{1 -\alpha + \alpha\Phi(\sigma_0) + \varepsilon} . 
    \end{align*}
    
Since we assume $\sigma_0 > x_\Phi(\alpha_1)$, 
\[
\frac{1}{T}\meas\{t \in [T, 2T] : (\log t)^\alpha \notin \mathcal{G}_{\sigma_0, \Delta} \}  \to 0
\]
as $T \to \infty$. 
Therefore, we get $\meas\{t \in [T, 2T] : (\log t)^\alpha \in \mathcal{G}_{\sigma_0, \Delta} \} \sim T$ as $T \to \infty$.
\end{proof}

We prove the mean value estimates for logarithmic shifts. 

\begin{lemma} \label{mean1}
    Let $\alpha > 1$. 
Let $\{a_n\}_{n \ge 1}$ be complex numbers that satisfy $a_n \ll_{\varepsilon} n^{\varepsilon}$ for any $\varepsilon > 0$. 
Then, 
\[
\int_{T}^{cT} \left| \sum_{n \le Y} \frac{a_n}{n^{\sigma + i(\log t)^\alpha}} \right|^2 \, dt 
\ll_{c, \sigma, \varepsilon} T\left(\sum_{n \le Y}\frac{|a_n|^2}{n^{2\sigma}} +  (\log T)^{1- \alpha}Y^{2-2\sigma + \varepsilon} \right)
\]
for sufficiently large $T > 0$, $c > 1$, $1/2 < \sigma < 1$ and $0 < \varepsilon < \sigma -1/2$, and uniformly for $1/2 < \sigma' \le \sigma \le \sigma'' < 1$.
\end{lemma}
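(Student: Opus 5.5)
Expand the square and estimate the off-diagonal oscillatory integrals by the first derivative test, exactly as in Lemma~\ref{mean general}:
\[
\int_T^{cT}\Bigl|\sum_{n\le Y}\frac{a_n}{n^{\sigma+i(\log t)^\alpha}}\Bigr|^2dt
=\sum_{m,n\le Y}\frac{a_m\overline{a_n}}{(mn)^{\sigma}}\int_T^{cT}\Bigl(\frac{n}{m}\Bigr)^{i(\log t)^\alpha}dt .
\]
The diagonal $m=n$ contributes $(c-1)T\sum_{n\le Y}|a_n|^2n^{-2\sigma}$, which is the first term of the bound.

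For $m\ne n$ the phase is $\phi(t)=(\log t)^\alpha\log(n/m)$, so $\phi'(t)=\alpha(\log t)^{\alpha-1}t^{-1}\log(n/m)$. This is monotone in $t$ for large $t$, because $(\log t)^{\alpha-1}/t$ is eventually decreasing. On $[T,cT]$ we have $|\phi'(t)|\gg_{c} \alpha(\log T)^{\alpha-1}T^{-1}|\log(n/m)|$. The first derivative test then gives
\[
\int_T^{cT}\Bigl(\frac{n}{m}\Bigr)^{i(\log t)^\alpha}dt\ll_c \frac{T}{(\log T)^{\alpha-1}|\log(n/m)|}.
\]
The off-diagonal part is therefore
\[
\ll \frac{T}{(\log T)^{\alpha-1}}\sum_{\substack{m,n\le Y\\ m\ne n}}\frac{|a_m a_n|}{(mn)^\sigma|\log(n/m)|}.
\]

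It remains to bound the double sum by $Y^{2-2\sigma+\varepsilon}$.
\begin{itemize}
\item Use $a_n\ll_\varepsilon n^{\varepsilon/2}$ to pull out a factor $Y^{\varepsilon}$.
\item Split by whether $n>2m$ (or $m>2n$), where $|\log(n/m)|\gg1$, or $m/2<n<2m$, $n\ne m$.
\item In the first range the sum is $\ll(\sum_{n\le Y}n^{-\sigma})^2\ll Y^{2-2\sigma}$.
\item In the second range write $n=m+h$ with $1\le |h|\le m$. Then $|\log(n/m)|\gg |h|/m$, and the sum is
\[
\ll \sum_{m\le Y} m^{-2\sigma}\sum_{1\le |h|\le m}\frac{m}{|h|}\ll \sum_{m\le Y}m^{1-2\sigma}\log m\ll Y^{2-2\sigma}\log Y .
\]
\end{itemize}
The $\log Y$ is absorbed into $Y^{\varepsilon}$, so the off-diagonal part is $\ll T(\log T)^{1-\alpha}Y^{2-2\sigma+\varepsilon}$, which is the second term of the bound.

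The implied constants depend only on $c$, $\sigma$ and $\varepsilon$. Since $2-2\sigma>0$ stays bounded away from $0$ for $\sigma\le\sigma''<1$, and $\sigma\ge\sigma'>1/2$, they are uniform for $\sigma\in[\sigma',\sigma'']$.

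The only point needing care is the monotonicity and lower bound of $\phi'$ uniformly on $[T,cT]$. This follows from $\frac{d}{dt}\bigl((\log t)^{\alpha-1}/t\bigr)=(\log t)^{\alpha-2}\bigl((\alpha-1)-\log t\bigr)/t^2<0$ for $t>e^{\alpha-1}$, together with $(\log cT)^{\alpha-1}/(cT)\asymp_c(\log T)^{\alpha-1}/T$. The rest is the standard Montgomery–Vaughan-type counting above.
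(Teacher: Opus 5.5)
Your proposal is correct and follows the paper's proof: expand the square, keep the diagonal $(c-1)T\sum_{n\le Y}|a_n|^2n^{-2\sigma}$, and bound each off-diagonal integral by the first derivative test with $|\phi'|\gg_c (\log T)^{\alpha-1}T^{-1}|\log(n/m)|$. The paper only asserts the final bound $\sum_{m\ne n\le Y}|a_ma_n|(mn)^{-\sigma}|\log(n/m)|^{-1}\ll Y^{2-2\sigma+\varepsilon}$, and your split into $|\log(n/m)|\gg 1$ and $n=m+h$ supplies the missing detail. One small bookkeeping fix: start from $a_n\ll n^{\varepsilon/4}$ rather than $n^{\varepsilon/2}$, so that the extra $\log Y$ genuinely fits inside $Y^{\varepsilon}$.
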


\begin{proof}
    By simple calculations, we have
    \begin{align*}
        &\int_{T}^{cT} \left| \sum_{n \le Y} \frac{a_n}{n^{\sigma + i(\log t)^\alpha}} \right|^2 \, dt \\
&= \int_{T}^{cT} \sum_{m \le Y} \frac{a_m}{m^{\sigma + (\log t)^\alpha}} \sum_{n \le Y} \frac{\overline{a_n}}{n^{\sigma - i(\log t)^\alpha}} \, dt \\
&= \sum_{m, n \le Y} \frac{a_m \overline{a_n}}{(mn)^\sigma} \int_{T}^{cT} \left(\frac{n}{m} \right)^{i(\log t)^\alpha} \, dt  \\ 
&= (c-1)T\sum_{n \le Y}\frac{|a_n|^2}{n^{2\sigma}}  + \sum_{\substack{m, n \le Y \\ m \ne n}}\frac{a_m \overline{a_n}}{(mn)^\sigma} \int_{T}^{cT} \left(\frac{n}{m} \right)^{i(\log t)^\alpha} \, dt.  
    \end{align*}

We estimate the second term. 
By the first derivative test, we deduce
\begin{align*}
\int_{T}^{cT} \left(\frac{n}{m} \right)^{i(\log t)^\alpha} \, dt 
\ll \frac{T}{\alpha(\log cT)^{\alpha-1} \left|\log\frac{n}{m}\right|}. 
\end{align*}
Thus, the second term is bounded by 
\begin{align*}
    \sum_{\substack{m, n \le Y \\ m \ne n}}\frac{a_m \overline{a_n}}{(mn)^\sigma} \int_{T}^{cT} \left(\frac{n}{m} \right)^{i(\log t)^\alpha}dt &\ll \frac{T}{\alpha(\log cT)^{\alpha-1}} \sum_{m < n \le Y} \frac{|a(m)||a(n)|}{(mn)^\sigma \log\frac{n}{m}} \\ 
    &\ll_{\varepsilon, \alpha, c} T (\log T)^{1- \alpha}Y^{2-2\sigma + \varepsilon}. 
\end{align*}

\end{proof}

\begin{lemma} \label{lemma smooth uni}
Let $\varphi : [0, \infty) \to \mathbb{C}$ be a smooth function such that $\varphi$ and all its derivatives decay faster than any polynomial at infinity, and let $\hat{\varphi}(s) = \int_{0}^{\infty} \varphi(x) x^{s-1}\, dx$ be the Mellin transform of $\varphi$ on $\Re(s) > 0$.
\begin{enumerate}
\item[(1)] The Mellin transform $\hat{\varphi}$ extends to a meromorphic function on $\Re(s) > -1$, with at most a simple pole at $s = 0$ with residue $\varphi(0)$.
\item[(2)] For any real numbers $-1 < A < B$, the Mellin transform has rapid decay in the strip $A \le \sigma \le B$, in the sense that for any integer $k \ge 1$, there exists a constant $C = C(k, A, B) \ge 0$ such that 
\[
|\hat{\varphi}(\sigma + it) | \le C (1 + |t|)^{-k}.
\]
for all $A \le \sigma \le B$ and $|t| \ge 1$.
\item[(3)] For any $\sigma > 0$ and any $x \ge 0$, we have the Mellin inversion formula 
\[
\varphi(x) = \frac{1}{2 \pi i}\int_{\sigma - i \infty}^{\sigma + i \infty} \hat{\varphi}(s)x^{-s}\, ds.
\]

\end{enumerate}
\end{lemma}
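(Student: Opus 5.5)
The approach is the classical one for Mellin transforms of Schwartz-type functions on $[0,\infty)$: integrate by parts to continue analytically and to gain decay, then read off the inversion formula from Fourier inversion after a change of variables.

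For (1), I start from $\Re(s)>0$. There $\hat\varphi(s)=\int_0^\infty\varphi(x)x^{s-1}\,dx$ converges absolutely: near $0$ because $\varphi$ is bounded, and at infinity because $\varphi$ decays rapidly. It is holomorphic by differentiation under the integral sign, which is justified by dominated convergence on compact subsets. Integrating by parts once gives
\[
\hat\varphi(s)=\Bigl[\varphi(x)\frac{x^s}{s}\Bigr]_0^\infty-\frac1s\int_0^\infty\varphi'(x)x^{s}\,dx=-\frac1s\int_0^\infty\varphi'(x)x^{s}\,dx ,
\]
since the boundary terms vanish for $\Re(s)>0$. The right-hand integral converges absolutely and is holomorphic on $\Re(s)>-1$, because $\varphi'$ is bounded near $0$ and rapidly decreasing. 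This furnishes the meromorphic continuation to $\Re(s)>-1$. The only possible pole is at $s=0$, and its residue is $-\int_0^\infty\varphi'(x)\,dx=\varphi(0)$.

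For (2), I iterate the integration by parts. For $\Re(s)>-1$ and $k\ge1$, repeating the step above $k$ times gives
\[
\hat\varphi(s)=\frac{(-1)^{k}}{s(s+1)\cdots(s+k-1)}\int_0^\infty\varphi^{(k)}(x)x^{s+k-1}\,dx .
\]
I check this identity first for $\Re(s)>0$, where all boundary terms vanish because $x^{s+j}\varphi^{(j+1)}(x)\to0$ at both ends; it then holds on $\Re(s)>-1$ by analytic continuation. For $A\le\sigma\le B$ the integral is bounded by $\int_0^\infty|\varphi^{(k)}(x)|x^{\sigma+k-1}\,dx$. This is finite and uniform in the strip, since $\sigma+k-1\ge A+k-1>-1$ for $k\ge1$ and $\varphi^{(k)}$ decays rapidly. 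For $|t|\ge1$ the prefactor satisfies $|s(s+1)\cdots(s+k-1)|\ge|t|^k\gg(1+|t|)^k$, which yields $|\hat\varphi(\sigma+it)|\le C(1+|t|)^{-k}$.

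For (3), I substitute $x=e^{-u}$, so that $\hat\varphi(\sigma+it)=\int_{\mathbb R}\varphi(e^{-u})e^{-\sigma u}e^{-itu}\,du$. This is the Fourier transform of $g(u)=\varphi(e^{-u})e^{-\sigma u}$. That function is smooth and integrable: it decays like $e^{-\sigma u}$ as $u\to+\infty$ because $\varphi$ is bounded, and rapidly as $u\to-\infty$. By (2) the transform $\hat g$ is integrable, so Fourier inversion applies pointwise and gives $g(u)=\frac1{2\pi}\int\hat\varphi(\sigma+it)e^{itu}\,dt$. Putting back $x=e^{-u}$ yields the stated formula for $x>0$. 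At $x=0$ I interpret the identity by continuity; for $x>0$ it is exactly Fourier inversion.

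The only delicate points are the vanishing of the boundary terms and the uniformity of the bounds in the strip; neither is a real obstacle.
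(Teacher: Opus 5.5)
Your proof is correct and follows the standard route: integration by parts gives the continuation and the decay, and Fourier inversion after the substitution $x=e^{-u}$ gives the inversion formula. This is essentially the argument in the Kowalski appendix that the paper cites in place of a proof. Your caution at $x=0$ is justified: $x^{-s}$ is undefined there for $\Re(s)>0$, so part (3) only makes sense for $x>0$.
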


\begin{proof}
    The proof can be found in \cite[Appendix~A]{Ko}.
\end{proof}

Fix a real-valued smooth function $\varphi(x)$ on $[0, \infty)$ with compact support satisfying $\varphi(x) = 1$ for $0 \le x \le 1$ and $0 \le \varphi(x) \le 1$ for $x \ge 0$. 
We put 
\[
\log\zeta_X(\sigma + it) = \sum_{n=2}^{\infty} \frac{\Lambda(n)\varphi(n/X)}{n^{\sigma + it}\log n}, \quad \log\zeta_X(s, X) = \sum_{n=2}^\infty \frac{\Lambda(n)X(n)\varphi(n/X)}{n^{s}\log n}
\]
for $X \ge 2$. 
We next prove that $\log \zeta$ can be approximated by $\log\zeta_X$. 

Let $Y = Y(T) = (\log\log T)^{\frac{8}{\sigma_0 - \frac12}}$, $\mathcal{X}_{\mathcal{K}, j}(T) = \{t \in [T, 2T] : (\log t)^{\alpha_j} \in \mathcal{G}_{\sigma_0, |\mathcal{K}| + Y + 4} \}$ and $\mathcal{X}_{\mathcal{K}}(T) = \bigcap_{j=1}^{r} \mathcal{X}_{\mathcal{K}, j}(T)$. 
By Lemma~\ref{meas log}, we see that $\meas([T, 2T] \setminus \mathcal{X}_{\mathcal{K}, j}(T)) = o(T)$ as $T \to \infty$ and 
\[
\meas([T, 2T] \setminus \mathcal{X}_{\mathcal{K}}(T)) \le \sum_{j=1}^r \meas([T, 2T] \setminus \mathcal{X}_{\mathcal{K}, j}(T)) = o(T) 
\]
as $T \to \infty$. 
Therefore, we have $\meas(\mathcal{X}_\mathcal{K}(T)) \sim T$.  

\begin{lemma} \label{appro X}
Let $C$ be a compact set in $\mathcal{R}$. Then, for $j=1, \dots, r$,
    \[
    \lim_{X \to \infty} \limsup_{T \to \infty} \frac{1}{T} \int_{\mathcal{X}_{\mathcal{K}}(T)} \sup_{s \in C}\left|\log\zeta(s + i(\log \tau)^{\alpha_j}) - \log\zeta_X(s + i(\log\tau)^{\alpha_j}) \right| d\tau = 0. 
    \]
\end{lemma}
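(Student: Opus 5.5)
We follow the standard Mellin-transform route (as in Bagchi's and Kowalski's proofs), adapted to the shift $(\log\tau)^{\alpha_j}$. Fix $j$ and $C\subset\mathcal{R}$ compact. By Lemma~\ref{lemma smooth uni}(3), for $s\in C$ and real $t$,
\[
\log\zeta_X(s+it)=\frac{1}{2\pi i}\int_{2-i\infty}^{2+i\infty}\log\zeta(s+it+w)\,\hat\varphi(w)X^{w}\,dw .
\]
Shift the contour to $\Re w=\sigma_0-\Re s<0$. This needs $\log\zeta(s+it+w)$ to be holomorphic in $\sigma_0-\Re s\le\Re w\le 2$. That holds when $t=(\log\tau)^{\alpha_j}$ with $\tau\in\mathcal{X}_{\mathcal{K}}(T)$, at least for the part of the vertical range with $|\Im w|\le Y+3$: in that window there is no zero with $\beta>\sigma_0$ within distance $|\mathcal{K}|+Y+4$ of $t+\tau_0$. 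To handle the tails $|\Im w|>Y$, we instead truncate the integral there using the rapid decay in Lemma~\ref{lemma smooth uni}(2) together with a crude bound. For $\Re(s+w)\ge\sigma_0$ one has $\log\zeta(s+it+w)\ll\log(|t|+|w|)$ away from zeros. More robustly, we bound the tail contribution directly: $\int_{|\Im w|>Y}|\hat\varphi(w)|X^{2}\,|\log\zeta|\,|dw|$, where the $\log\zeta$ on $\Re w=2$ is bounded. This tail is $\ll X^2Y^{-k}$.

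The residue at $w=0$ is $\log\zeta(s+it)$. Therefore, for $\tau\in\mathcal{X}_{\mathcal{K}}(T)$,
\[
\log\zeta_X(s+it)-\log\zeta(s+it)=\frac{1}{2\pi i}\int_{\Re w=-\delta_s,\ |\Im w|\le Y}\log\zeta(s+it+w)\hat\varphi(w)X^{w}\,dw+O(X^{2}Y^{-k}),
\]
with $-\delta_s=\sigma_1'-\Re s$ for some fixed $\sigma_0<\sigma_1'<\min_{C}\Re s$. Hence $X^{w}$ gives a factor $X^{-\delta}$ with $\delta>0$ uniform on $C$. By Cauchy's integral formula on a slightly larger compact set, the $\sup_{s\in C}$ can be replaced by an integral over a compact neighbourhood. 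Integrating over $\tau\in\mathcal{X}_{\mathcal{K}}(T)$ and using Fubini, it suffices to show
\[
\frac1T\int_{\mathcal{X}_{\mathcal{K}}(T)}\bigl|\log\zeta(\sigma+i(\log\tau)^{\alpha_j}+iu)\bigr|\,d\tau\ll (1+|u|)^{A}
\]
uniformly for $\sigma$ in a compact subinterval of $(\sigma_0,1)$ and $|u|\le Y$. Given this, the rapid decay of $\hat\varphi$ yields the total bound $X^{-\delta}+X^2Y^{-k}$. Letting $T\to\infty$ (so $Y\to\infty$) and then $X\to\infty$ gives the claim.

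The main obstacle is this first-moment bound for $\log\zeta$ along the log-power shift near the line $\sigma_0$. We would obtain it by approximating $\log\zeta$ with a short Dirichlet polynomial. On $\mathcal{X}_{\mathcal{K}}(T)$ the zero-free neighbourhood of size $Y+4$ around $(\log\tau)^{\alpha_j}$ lets us apply Lemma~\ref{appro} with $T$ replaced by the height $(\log T)^{\alpha_j}$ and $y=Y$. This gives $\log\zeta=P_Y+O(Y^{(1/2-\sigma_0)/2}(\log\log T)^3)$. Here the choice $Y=(\log\log T)^{8/(\sigma_0-1/2)}$ makes the error $o(1)$. Then Cauchy--Schwarz and Lemma~\ref{mean1} with $a_n=\Lambda(n)/\log n$ bound the mean square of $P_Y$ by $\sum_n\Lambda(n)^2/(n^{2\sigma}\log^2 n)+(\log T)^{1-\alpha_j}Y^{2-2\sigma+\varepsilon}\ll1$, since $Y$ is a power of $\log\log T$ while $\alpha_j>1$. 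The points that need care are the uniformity of Lemma~\ref{appro} in the shift $u$, and checking that its exceptional set is contained in the complement of $\mathcal{X}_{\mathcal{K}}(T)$. The measure of that complement has already been controlled by Lemma~\ref{meas log}.
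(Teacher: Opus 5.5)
Your argument is sound in outline, but it follows a different route from the paper. The paper never shifts a contour. After using Cauchy's formula on $\partial\mathcal{R}$ to remove the $\sup_{s\in C}$, it approximates $\log\zeta$ pointwise on the good set by $P_Y$ via Lemma~\ref{appro}, and then compares $P_Y$ with $\log\zeta_X$ directly. Once $Y$ exceeds the support of $\varphi(\cdot/X)$, $P_Y-\log\zeta_X$ is a Dirichlet polynomial supported on $X<n\le Y$. Cauchy--Schwarz and Lemma~\ref{mean1} then bound its first moment by $\bigl(\sum_{n>X}\Lambda(n)^2n^{-2\sigma}(\log n)^{-2}+(\log T)^{1-\alpha_j}Y^{2-2\sigma+\varepsilon}\bigr)^{1/2}$, which gives the claim after $T\to\infty$ and then $X\to\infty$.

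You instead write $\log\zeta_X$ as a Mellin integral, recover $\log\zeta$ as the residue at $w=0$, and get the decay in $X$ from the factor $X^{-\delta}$ on the shifted line. This needs only an $O(1)$ first moment of $|\log\zeta|$ on one vertical line left of $C$. But you prove that bound with exactly the same two inputs, Lemma~\ref{appro} and Lemma~\ref{mean1}. Once $\log\zeta\approx P_Y$ is available pointwise, the Mellin step is a detour. The paper's version is shorter: it needs no truncation, no horizontal segments, no Borel--Carath\'eodory bound, and no uniformity in a vertical shift. What your route buys is that it would still work if only a mean-value bound were available rather than a Dirichlet-polynomial approximation, as for $\zeta$ itself in Bagchi's argument.

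Some bookkeeping points:

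\begin{itemize}
\item \textbf{Window size.} With truncation at $|\Im w|\le Y$ and Lemma~\ref{appro} applied with $y=Y$ at a height shifted by $u$, you need a zero-free window of radius about $|u|+Y+3\approx 2Y$ around $(\log\tau)^{\alpha_j}+\tau_0$. However, $\mathcal{X}_{\mathcal{K}}(T)$ only guarantees radius $|\mathcal{K}|+Y+4$. Truncate at $Y/2$ and take $y=Y/2$ instead. Absorb the shift into the coefficients $a_n=\Lambda(n)n^{-iu}/\log n$ in Lemma~\ref{mean1}; this makes the first-moment bound uniform in $u$.
\item \textbf{Horizontal segments.} These cost $X^2Y^{-k}\log\log T$ rather than $X^2Y^{-k}$, which is harmless.
\item \textbf{Zero threshold.} The containment you flag does not hold literally. $\mathcal{X}_{\mathcal{K}}(T)$ excludes only zeros with $\beta>\sigma_0$, while Lemma~\ref{appro} with parameter $\sigma_0$ needs zeros with $\beta>\frac12(\frac12+\sigma_0)$ excluded. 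So you need a Granville--Soundararajan form of the approximation with zero threshold $\sigma_0$, valid for $\sigma>\sigma_0$. The paper's passage from $\mathcal{X}_{\mathcal{K}}(T)$ to $\mathcal{X}'_{\mathcal{K}}(T)$ glosses over the same point, so this is not a defect peculiar to your approach.
\end{itemize}
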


\begin{proof}

Let $T$ be sufficiently large.
For $\tau \in \mathcal{X}_\mathcal{K}(T)$,  $\log\zeta(s + i(\log\tau)^{\alpha_j})$ is holomorphic on $\overline{\mathcal{R}}$.
By Cauchy's integral formula, we have 
\[
\log\zeta(s +i(\log\tau)^{\alpha_j}) - \log\zeta_X(s + i(\log\tau)^{\alpha_j})
=\frac{1}{2\pi i}\int_{\partial{\mathcal{R}}} \frac{\log\zeta(z + i(\log\tau)^{\alpha_j}) - \log\zeta_X(z + i(\log\tau)^{\alpha_j})}{z - s}\, dz
\]
for any $s \in C$ and any $\tau \in \mathcal{X}_\mathcal{K}(T)$. 
Hence we obtain
\begin{align*}
&\int_{\mathcal{X}_\mathcal{K}(T)} \sup_{s \in C}|\log\zeta(s + i(\log\tau)^{\alpha_j}) - \log\zeta_X(s + i(\log\tau)^{\alpha_j})|\, d\tau \\
&=\int_{\mathcal{X}_\mathcal{K}(T)} \sup_{s \in C}\left|\frac{1}{2\pi i}\int_{\partial{\mathcal{R}}} \frac{\log\zeta(z +i(\log\tau)^{\alpha_j}) -\log\zeta_X(z + i(\log\tau)^{\alpha_j})}{z - s}\, dz \right|\, d\tau \\
&\ll_{C} \int_{\partial{\mathcal{R}}}|dz| \int_{\mathcal{X}_{\mathcal{K}, j}(T)}|\log\zeta(z + i(\log\tau)^{\alpha_j}) -\log\zeta_X(z + i(\log\tau)^{\alpha_j})|\, d\tau \\
&\ll_{\mathcal{R}}  \sup_{\sigma: s \in \partial{\mathcal{R}}} \int_{\mathcal{X}'_\mathcal{K}(T)} |\log\zeta(\sigma + i(\log t)^{\alpha_j}) -\log\zeta_X(\sigma + i(\log t)^{\alpha_j})|\, dt, 
\end{align*}
where $\mathcal{X}'_\mathcal{K}(T) = \{t \in [T/2, 5T/2] : (\log t)^{\alpha_j} \notin \bm{\ell}([(\log (T/2))^{\alpha_j}, (\log({5T}/2))^{\alpha_j}]; \sigma_0, Y) \}$.  

Here, we have 
\begin{align*}
&\int_{\mathcal{X}'_\mathcal{K}(T)} |\log\zeta(\sigma + i(\log t)^{\alpha_j}) -\log\zeta_X(\sigma + i(\log t)^{\alpha_j})|\, dt \\
&\le \int_{\mathcal{X}'_\mathcal{K}(T)} \left|\log\zeta(\sigma +i(\log t)^{\alpha_j}) - \sum_{2 \le n \le Y} \frac{\Lambda(n)}{n^{\sigma + i(\log t)^{\alpha_j}} \log n} \right|\, dt \\
&+\int_{\mathcal{X}'_\mathcal{K}(T)} \left|\sum_{2 \le n \le Y} \frac{\Lambda}{n^{\sigma + i(\log t)^{\alpha_j}} \log n}-\log\zeta_X(\sigma +i(\log t)^{\alpha_j})\right|\, dt.
\end{align*}
By Lemma~\ref{appro}, the first term is bounded by
\begin{align*}
    \frac{1}{T}\int_{\mathcal{X}'_\mathcal{K}(T)} \left|\log\zeta(\sigma +i(\log t)^{\alpha_j}) - \sum_{2 \le n \le Y} \frac{\Lambda(n)}{n^{\sigma + i(\log t)^{\alpha_j}} \log n} \right|\, dt  \ll (\log\log T)^{-1}. 
\end{align*}
Applying the Cauchy--Schwarz inequality, and Lemma~\ref{mean1} to the second term, we obtain 
\begin{align*}
&\frac{1}{T}\int_{\mathcal{X}'_\mathcal{K}(T)} \left|\sum_{2 \le n \le Y} \frac{\Lambda(n)}{n^{\sigma + i(\log t)^{\alpha_j}} \log n}-\log\zeta_X(\sigma +i(\log t)^{\alpha_j})\right|\, dt \\
&\ll \left(\frac{1}{T}\int_{T/2}^{{5T}/2} \left|\sum_{2 \le n \le Y} \frac{\Lambda(n)}{n^{\sigma +i(\log t)^{\alpha_j}} \log n} -\log \zeta_X(\sigma +i(\log t)^{\alpha_j})\right|^2\, dt \right)^{\frac12} \\
&= \left(\frac{1}{T}\int_{T/2}^{{5T}/2} \left|\sum_{X \le n \le Y} \frac{\Lambda(n) \varphi(n/X)}{n^{\sigma +i(\log t)^{\alpha_j}} \log n} \right|^2\, dt \right)^{\frac12} \\
&\ll \left(\sum_{X < n \le Y} \frac{\Lambda(n)^2}{n^{2\sigma_0}(\log n)^2} + (\log T)^{1-\alpha_j} Y^{2-2\sigma_0 + \varepsilon} \right)^\frac12.
\end{align*}

Therefore, 
\begin{align*}
&\limsup_{T \to \infty} \frac{1}{T} \int_{\mathcal{X}_\mathcal{K}(T)} \sup_{s \in C}|\log\zeta(s +i(\log\tau)^{\alpha_j}) -\log\zeta_X(s +i(\log\tau)^{\alpha_j})|\, d\tau \\
&\ll \left( \sum_{X < n}\frac{|\Lambda(n)|^2 }{n^{2\sigma_0} (\log{n})^{2}}\right)^{\frac{1}{2}}
\to 0
\end{align*}
as $X \to \infty$. 

\end{proof}

Let $\mathcal{P}_0$ be a finite set of prime numbers. 
Let $\Omega_{\mathcal{P}_0} = \prod_{p \in \mathcal{P}_0} S_p$ 
and $\Omega^r_{\mathcal{P}_0} = \Omega_{\mathcal{P}_0 1} \times \dots \times \Omega_{\mathcal{P}_0 r}$, 
where $\Omega_{\mathcal{P}_0 j} = \Omega_{\mathcal{P}_0}$ for all $j = 1, \dots, r$.

Define the probability measure on 
$(\Omega^r_{\mathcal{P}_0}, \mathcal{B}(\Omega^r_{\mathcal{P}_0}))$ by 
\[
Q^{\mathcal{P}_0}_T(A) = \frac{1}{T} \mathrm{meas} \left\{\tau \in [T, 2T] : ((p^{-i(\log\tau)^{\alpha_1}})_{p \in \mathcal{P}_0} , \dots, (p^{-i(\log\tau)^{\alpha_r}})_{p \in \mathcal{P}_0}) \in A \right\}, 
\]
for $A \in \mathcal{B}(\Omega^r_{\mathcal{P}_0})$. 
Let $\mathbf{m}^r_{\mathcal{P}_0}$ be the probability Haar measure on $(\Omega^r_{\mathcal{P}_0}, \mathcal{B}(\Omega^r_{\mathcal{P}_0}))$. 
Then, the following lemma holds. 

\begin{lemma} \label{Prob joint}
The probability measure $Q^{\mathcal{P}_0}_T$ converges weakly to $\mathbf{m}^r_{\mathcal{P}_0}$ as $T \to \infty$.  
\end{lemma}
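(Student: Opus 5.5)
Since $\Omega^r_{\mathcal{P}_0}$ is a compact abelian group (a finite product of circles), the plan is to verify weak convergence through Fourier coefficients, i.e.\ Weyl's criterion. The characters of $\Omega^r_{\mathcal{P}_0}$ are
\[
\chi(\underline{x}) = \prod_{j=1}^r \prod_{p \in \mathcal{P}_0} x_{jp}^{k_{jp}}, \qquad k_{jp} \in \mathbb{Z}.
\]
It therefore suffices to show that for every nontrivial choice of integers $(k_{jp})$,
\[
\frac{1}{T}\int_T^{2T} \exp\Bigl(-i\sum_{j=1}^r (\log\tau)^{\alpha_j}\, \lambda_j\Bigr)\, d\tau \to 0, \qquad \lambda_j := \sum_{p \in \mathcal{P}_0} k_{jp}\log p,
\]
while the trivial character gives $1$. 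By the unique factorization of integers, the numbers $\log p$ are linearly independent over $\mathbb{Q}$, so $\lambda_j = 0$ if and only if $k_{jp}=0$ for all $p$. A nontrivial character therefore has at least one index $j$ with $\lambda_j \neq 0$.

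Let $j_0$ be the largest index with $\lambda_{j_0} \ne 0$. The phase is $F(\tau) = \sum_{j \le j_0} \lambda_j (\log\tau)^{\alpha_j}$, and
\[
F'(\tau) = \frac{1}{\tau}\sum_{j\le j_0} \alpha_j\lambda_j(\log\tau)^{\alpha_j-1}.
\]
Because $\alpha_1<\dots<\alpha_r$ are distinct, the term with $j=j_0$ dominates for large $\tau$. Hence $|F'(\tau)| \gg \tau^{-1}(\log\tau)^{\alpha_{j_0}-1}$ on $[T,2T]$, and this quantity is $\gg T^{-1}(\log T)^{\alpha_1-1}$ for large $T$. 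For the same reason $F'$ has constant sign there and is monotone for large $\tau$: $F''$ is dominated by $-\alpha_{j_0}\lambda_{j_0}\tau^{-2}(\log\tau)^{\alpha_{j_0}-1}$, so it has fixed sign.

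The first derivative test then gives
\[
\int_T^{2T} e^{-iF(\tau)}\,d\tau \ll \frac{T}{(\log T)^{\alpha_{j_0}-1}},
\]
so the normalized integral is $O\bigl((\log T)^{1-\alpha_{j_0}}\bigr)$, which tends to $0$ precisely because $\alpha_{j_0}\ge\alpha_1>1$. This is the same estimate used in Lemma~\ref{mean1}, and it is the point where $\alpha_1>1$ is needed. For $\alpha=1$ the bound would only be $O(1)$; indeed $\tau^{-i\lambda}$ is not equidistributed in the Cesàro sense on $[T,2T]$.

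The main technical step is confirming that $F'$ is dominated by its leading term and is monotone for large $\tau$, since the first derivative test requires this. Both follow from comparing the powers of $\log\tau$: the lower-order terms are smaller by a factor $(\log\tau)^{\alpha_{j}-\alpha_{j_0}}$, which tends to $0$. Once every nontrivial Fourier coefficient of $Q_T^{\mathcal{P}_0}$ tends to $0$, the coefficients converge to those of the Haar measure $\mathbf{m}^r_{\mathcal{P}_0}$. Trigonometric polynomials are dense in $C(\Omega^r_{\mathcal{P}_0})$ by Stone--Weierstrass, so $Q_T^{\mathcal{P}_0}$ converges weakly to $\mathbf{m}^r_{\mathcal{P}_0}$.
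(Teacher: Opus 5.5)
Your proposal is correct and follows the paper's proof essentially step by step. You compute the Fourier coefficients of $Q_T^{\mathcal{P}_0}$, use the $\mathbb{Q}$-linear independence of $\{\log p\}$ to get a nonzero $\lambda_j$, take $j_0$ to be the largest such index, and apply the first derivative test to obtain $O((\log T)^{1-\alpha_{j_0}})$; your explicit check that $F'$ is dominated by its $j_0$-term and is monotone fills in a detail the paper leaves implicit.
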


\begin{proof}
    Let $T$ be sufficiently large. 
We consider the Fourier transform $g_T(\underline{\mathbf{n}}_1, \dots, \underline{\mathbf{n}}_r)$ of $Q^{\mathcal{P}_0}_T$ for 
$(\underline{\mathbf{n}}_1, \dots, \underline{\mathbf{n}}_r) = ((n_{1 p})_{p \in \mathcal{P}_0}, \dots (n_{r p})_{p \in \mathcal{P}_0}) \in (\mathbb{Z}^{\#{\mathcal{P}_0}})^r$. 
Its Fourier transform is 
\[
g_T(\underline{\mathbf{n}}_1, \dots, \underline{\mathbf{n}}_r) := \int_{\Omega^r_{\mathcal{P}_0}} \prod_{j=1}^{r} \prod_{p \in \mathcal{P}_0} x^{n_{j p}}_p \,dQ^{\mathcal{P}_0}_T = \frac{1}{T}\int_{T}^{2T} \prod_{j=1}^{r} \prod_{p \in \mathcal{P}_0} p^{-i n_{j p} (\log\tau)^{\alpha_j}} \, d\tau.
\]
It is enough to show that 
\begin{align*}
    \lim_{T \to \infty}  g_T(\underline{\mathbf{n}}_1, \dots, \underline{\mathbf{n}}_r) = 
\begin{cases}
1 & \mbox{if $(\underline{\mathbf{n}}_1, \dots, \underline{\mathbf{n}}_r) = (\bar{0}, \dots, \bar{0})$}, \\
0 & \mbox{if $(\underline{\mathbf{n}}_1, \dots, \underline{\mathbf{n}}_r) \ne(\bar{0}, \dots, \bar{0})$}.
\end{cases}
\end{align*}

If $(\underline{\mathbf{n}}_1, \dots, \underline{\mathbf{n}}_r) = (\bar{0}, \dots, \bar{0})$, it is clear that $g_T(\underline{\mathbf{n}}_1, \dots, \underline{\mathbf{n}}_r) = 1$. 
If $(\underline{\mathbf{n}}_1, \dots, \underline{\mathbf{n}}_r) \ne (\bar{0}, \dots, \bar{0})$, then there exists $j = 1, \dots, r$ such that $\sum_{p \in \mathcal{P}_0}n_{j p} \log{p} \ne 0$ since $\{\log{p} : p \in \mathcal{P}_0 \}$ is linearly independent over the field of rational numbers. 
If $(\underline{\mathbf{n}}_1, \dots, \underline{\mathbf{n}}_r) \ne (\bar{0}, \dots, \bar{0})$, then there exists an index $j = 1, \dots, r$ such that $\sum_{p \in \mathcal{P}_0}n_{j p} \log{p} \ne 0$ since $\{\log{p} : p \in \mathcal{P}_0 \}$ is linearly independent over the field of rational numbers. 
We put $c_j = \sum_{p \in \mathcal{P}_0}n_{j p} \log{p}$ for any $j = 1, \dots, r$, and $j_0 := \max\{1 \le j \le r : c_j \ne 0 \}$. 
Applying the first derivative test, we obtain
\begin{align*}
     \frac{1}{T}\int_{T}^{2T} \prod_{j=1}^{r} \prod_{p \in \mathcal{P}_0} p^{-i n_{j p} (\log\tau)^{\alpha_j}} \, d\tau \ll_{c_j, \alpha_j} \frac{1}{(\log T)^{\alpha_{j_0} -1}} \to 0
\end{align*}
as $T \to \infty$. 
\end{proof}

\begin{proof}[Proof of Proposition~\ref{prop uni}]
    By Portmanteau's theorem (cf. \cite[Theorem~13.16]{Kl}), 
we shall show that $|\mathbb{E}^{\mathcal{Q}_T}[F] - \mathbb{E}^{\mathcal{Q}}[F]| \to 0$ as $T \to \infty$ for all 
bounded Lipschitz continuous function $F$ : $\mathcal{H}^r(\mathcal{R}) \to \mathbb{R}$.
Let $F$ be a bounded real-valued Lipschitz function.
Then there exist nonnegative constants $C_1, C_2$ such that
\[
|F(f)| \le C_1,\ |F(f) - F(g)| \le C_2 d(f, g)
\]
for all $f, g \in \mathcal{H}(\mathcal{R})$.
Now  
\begin{align*}
|\mathbb{E}^{\mathcal{Q}_T}[F] - \mathbb{E}^{\mathcal{Q}}[F]| 
&= |\mathbb{E}^{\mathbb{P}_T}[F(\log\underline{\zeta}(s + i\underline{\log}(\tau)))] - \mathbb{E}^{\m^r}[F(\log\underline{\zeta}(s, X))]| \\
&\le |\mathbb{E}^{\mathbb{P}_T}[F(\log\underline{\zeta}(s + i\underline{\log}(\tau)))] - \mathbb{E}^{\mathbb{P}_T}[F(\log\underline{\zeta}_X(s + i\underline{\log}(\tau)))]| \\
&\ + |\mathbb{E}^{\mathbb{P}_T}[F(\log\underline{\zeta}_X(s + i\underline{\log}(\tau)))] - \mathbb{E}^{\m^r}[F(\log\underline{\zeta}_X(s, X))]| \\
&\ + |\mathbb{E}^{\m^r}[F(\log\underline{\zeta}_X(s, X))] - \mathbb{E}^{\m^r}[F(\log\underline{\zeta}(s, X))]| \\
&=: E_1  +E_2 + E_3
\end{align*}
holds for $X \ge 2$. 

First, we evaluate $E_1$. 
Using the definition of $F$, one can see that  
\begin{align*}
    E_1 
    &\le \frac{1}{T}\int_T^{2T} \left|F(\log\underline{\zeta}(s + i\underline{\log}(\tau))) - F(\log\underline{\zeta}_X(s + i\underline{\log}(\tau)))\right|d\tau \\
    &\le \frac{1}{T}\int_{\mathcal{\mathcal{X}_\mathcal{K}}(T)}\left|F(\log\underline{\zeta}(s + i\underline{\log}(\tau))) - F(\log\underline{\zeta}_X(s + i\underline{\log}(\tau)))\right|d\tau \\
    &+\frac{1}{T}\int_{[T, 2T] \setminus \mathcal{\mathcal{X}_\mathcal{K}}(T)}\left|F(\log\underline{\zeta}(s + i\underline{\log}(\tau))) - F(\log\underline{\zeta}_X(s + i\underline{\log}(\tau)))\right|d\tau \\
    &\le \frac{C_2(F)}{T}\int_{\mathcal{X}_{\mathcal{K}}(T)} \underline{d}(\log\underline{\zeta}(s + i\underline{\log}(\tau)), \log\underline{\zeta}_X(s + i\underline{\log}(\tau)))d\tau \\
    &+ 2C_1(F) \frac{\meas([T, 2T]\setminus \mathcal{X}_{\mathcal{K}}(T))}{T}
\end{align*}
By Lemma~\ref{appro X} and the definition of $\underline{d}$, it follows that 
\begin{align*}
  \lim_{X \to \infty}  \limsup_{T \to \infty} \frac{C_2(F)}{T}\int_{\mathcal{X}_{\mathcal{K}}(T)} \underline{d}(\log\underline{\zeta}(s + i\underline{\log}(\tau)), \log\underline{\zeta}_X(s + i\underline{\log}(\tau)))d\tau =0. 
\end{align*}
From $\meas(\mathcal{X}_{\mathcal{K}}(T)) \sim T$ as $T \to \infty$, we have 
\[
\lim_{T \to \infty} \frac{\meas([T, 2T]\setminus \mathcal{X}_{\mathcal{K}}(T))}{T} =0. 
\]

Lemma~\ref{Prob joint} implies $E_2 \to 0$ as $T \to \infty$. 
In addition, by \cite[Lemma~5]{En}, $\lim_{X \to \infty}E_3 = 0$. 
This completes the proof.
\end{proof}

\section{Proof of Theorem~\ref{main1} and Theorem~\ref{main2}}

\subsection{Proof of Theorem~\ref{main1}}

For a probability space $(\Omega, \mathcal{M}, P)$, a minimal closed set $S \subset \Omega$ such that $P(S) = 1$ is called a support of $P$. 
It is known that the support of $\log\zeta(s, \omega)$ coincides with $\mathcal{H}(\mathcal{R})$ (cf. \cite[Proposition~2]{En}). 

\begin{proof}[Proof of theorem~\ref{main1}]
Let $f_1(s), \dots, f_r(s)$ be continuous functions on $\mathcal{K}$ and holomorphic on the interior of $\mathcal{K}$. 
Fix $\varepsilon >0$. 
By Mergelyan's theorem, there exist polynomials $P_1(s), \dots, P_r(s)$ such that
\[
\max_{1 \le j \le r} \sup_{s \in \mathcal{K}} |f_j(s) - P_j(s)| < \varepsilon.
\]
Define an open set of $\mathcal{H}^r(\mathcal{R})$ by 
\[
\Phi_r = \Phi(P_1, \dots, P_r) := \left\{(g_1(s), \dots, g_r(s)) \in \mathcal{H}^r(\mathcal{R}) : \max_{1 \le j \le r}\sup_{s \in \mathcal{K}} |g_j(s) - P_j(s)| < \varepsilon \right\}.
\] 
Recall that
\begin{align*}
    \mathcal{Q}(\Phi_r) = \m^r\left\{(X_j)_{1 \le j \le r} \in \Omega^r : \max_{1 \le j \le r} \sup_{s \in \mathcal{K}} \left|\log\zeta(s, X_j) - P_j(s) \right| < \varepsilon \right\}. 
\end{align*}
Since $\log\zeta(s, X_j)$ does not depend on the choice of $X_k \in \Omega_k$ for any $k \ne j$, 
it follows that
\begin{align*}
     \mathcal{Q}(\Phi_r) = \prod_{j=1}^r \m\left\{X_j \in \Omega_j : \sup_{s \in \mathcal{K}} \left|\log\zeta(s, X_j) - P_j(s) \right| < \varepsilon \right\}. 
\end{align*}
Combining these arguments and Portmanteau's theorem (cf. \cite[Theorem~13.16]{Kl}), we have 
\begin{align*}
&\liminf_{T \to \infty} \frac{1}{T} \meas \left\{\tau \in [T, 2T] : \max_{1 \le j \le r} \sup_{s \in \mathcal{K}} |\log\zeta(s + i(\log \tau)^{\alpha_j}) - f_j(s)| < \varepsilon \right\} \\ &= \liminf_{T \to \infty} \mathcal{Q}_T(\Phi_r) \\
&\ge \mathcal{Q}(\Phi_r) = \prod_{j=1}^r \m\left\{X_j \in \Omega_j : \sup_{s \in \mathcal{K}} \left|\log\zeta(s, X_j) - P_j(s) \right| < \varepsilon \right\} > 0. 
\end{align*}
Since 
\begin{align*}
    &\max_{1 \le j \le r}\sup_{s \in \mathcal{K}} |\log\zeta(s + i(\log \tau)^{\alpha_j}) -f_j(s)| \\
&\le \max_{1 \le j \le r} \sup_{s \in \mathcal{K}} |\log\zeta(s + i(\log \tau)^{\alpha_j}) - P_j(s)| + \max_{1 \le j \le r} \sup_{s \in \mathcal{K}} |P_j(s) - f_j(s)|, 
\end{align*}
we get the first assertion. 

Next, we show that under the Riemann hypothesis we can extend this universality to $1/2 < \sigma < 1$. 
The restriction $x_\Phi(\alpha_1) < \sigma < 1$ arises from Lemma~\ref{meas log}. 
Assuming the Riemann hypothesis, we have $\Phi(\sigma) = 0$. 
Therefore, we do not have to consider Lemma~\ref{meas log}. 
Hence, we can extend joint universality for the region $1/2 < \sigma < 1$. 
\end{proof}

\subsection{Proof of Theorem~\ref{main2}}

The proof of Theorem~\ref{main2} is based on the proof of Theorem~\ref{main general}. 
We fix $r > 1$. 
The function $\gamma(t) = (\log t)^r$ satisfies (F1) and (F2). 
Since
\[
T\tilde\gamma(T)=\frac{r(\log2T)^{r-1}}2,
\]
all results in Section~\ref{section3} remain valid. Therefore, it suffices to establish analogues of Lemma~\ref{measure error}, Lemma~\ref{higher general} and Proposition~\ref{W general} in the case $(\log t)^r$. 

Fix $\sigma > x_\Phi(r)$. 
We put $\sigma_0 = \frac12\left(x_\Phi(r) + \sigma \right)$, 
$Y = (\log\log T)^{\frac{14}{\sigma_0 - \frac12}}$, and 
\[
\mathcal{I}_{r}(T) = \{t \in [T, 2T] : (\log t)^r \notin \bm{\ell}\left([(\log T/2)^r, (\log 5T/2)^r ];Y\right)\}. 
\]

\begin{lemma} \label{measure error log}
    For sufficiently small $\varepsilon > 0$, 
    \[
    \frac{1}{T}\meas\{[T, 2T] \setminus \mathcal{I}_r(T)\} \ll_\varepsilon  Y (\log T)^{1 -r+  r\Phi(\sigma_0)}. 
    \]
    Consequently, we have $\mathcal{I}_r(T) \sim T$. 
\end{lemma}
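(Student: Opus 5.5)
The plan is to follow the proof of Lemma~\ref{meas log} almost verbatim, with $\Delta$ replaced by $Y+3$ and the zero set restricted as in Lemma~\ref{appro}.

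By definition, $[T,2T]\setminus\mathcal{I}_r(T)$ is contained in the union of three kinds of sets. The first consists of the $t$ with $(\log t)^r$ in the two boundary intervals $[(\log T/2)^r,(\log T/2)^r+Y+3]$ and $[(\log 5T/2)^r-(Y+3),(\log 5T/2)^r]$. The second consists of the $t$ with $(\log t)^r\in(\gamma_\rho-(Y+3),\gamma_\rho+Y+3)$ for some zero $\rho$ with $\beta_\rho>\frac12(\frac12+\sigma_0)$ and $\gamma_\rho\in[(\log T/2)^r,(\log 5T/2)^r]$. The boundary intervals are handled directly. For $t\in[T,2T]$ we have $(\log t)^r\ge(\log T)^r$, and $(\log T)^r-(\log T/2)^r\asymp r\log 2\,(\log T)^{r-1}$, which exceeds $Y+3$ since $Y$ is only a power of $\log\log T$ and $r>1$. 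So these sets do not meet $[T,2T]$ for large $T$ and contribute nothing. (If one prefers not to check this, their preimages have measure $\ll YT(\log T)^{1-r}$, which is admissible anyway.)

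For a single zero, the map $t\mapsto(\log t)^r$ has derivative $r(\log t)^{r-1}/t$. By the mean value theorem applied to the inverse $u\mapsto\exp(u^{1/r})$, the preimage in $[T,2T]$ of an interval of length $2(Y+3)$ therefore has measure $\ll (Y+3)\,T/(r(\log T)^{r-1})$. Only zeros with $\gamma_\rho\in[(\log T)^r-Y-3,(\log 2T)^r+Y+3]$ matter, and all of them have height $\ll(\log T)^r$. Since $\frac12(\frac12+\sigma_0)$ is slightly less than $\sigma_0$, I would either apply the density bound at that abscissa, or work with the set defined using $\sigma_0$ directly as in Lemma~\ref{meas log}, which is what the stated exponent $\Phi(\sigma_0)$ indicates. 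Summing over these zeros with $N(\sigma_0,U)\ll_\varepsilon U^{\Phi(\sigma_0)+\varepsilon}$ at $U\asymp(\log T)^r$ gives
\[
\frac1T\meas\{[T,2T]\setminus\mathcal{I}_r(T)\}\ll_\varepsilon Y(\log T)^{1-r}(\log T)^{r\Phi(\sigma_0)+r\varepsilon}.
\]
Renaming $\varepsilon$ yields the stated bound, with the $\varepsilon$ loss absorbed as in Lemma~\ref{meas log}. The main point to verify is that the abscissa in the density estimate can be taken to be $\sigma_0$.

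For the consequence $\mathcal{I}_r(T)\sim T$, the definition of $x_\Phi(r)$ together with monotonicity of $\Phi$ gives $\Phi(\sigma_0)<1-1/r$ because $\sigma_0>x_\Phi(r)$. Hence $1-r+r\Phi(\sigma_0)<0$, so for small $\varepsilon$ the exponent remains negative. Since $Y=(\log\log T)^{O(1)}$, the whole bound tends to $0$, which gives $\meas(\mathcal{I}_r(T))\sim T$.
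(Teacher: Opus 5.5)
Your proposal is correct and follows the paper's route. The paper's proof is simply Lemma~\ref{meas log} applied with $\mathcal{K}=\{\sigma\}$ (so $\tau_0=0$ and $\Delta\asymp Y$), which is your change-of-variables estimate summed against $N(\sigma_0,(\log 2T)^r+Y+3)$. On the abscissa question you raise, the paper implicitly takes your second option, since the exceptional set in Lemma~\ref{meas log} is defined by zeros with $\beta>\sigma_0$; its bound, like yours, carries an extra $+\varepsilon$ in the exponent, so the $\varepsilon$-free display in the statement should be read with that loss.
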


\begin{proof}
    It suffices to take $\mathcal{K} = \{\sigma\}$ in Lemma~\ref{meas log}. 
\end{proof}

Using Lemma~\ref{measure error log}, we obtain higher moments of the Riemann zeta-function with $(\log t)^r$. 

\begin{lemma} \label{higher log}
        Let $T$ be sufficiently large. 
    Then, for $k \ge 1$ with \eqref{Y}, 
    \begin{align*}
        &\frac{1}{T}\int_{\mathcal{I}_{r}(T)} |\log\zeta(\sigma + i(\log t)^r)|^{2k}dt \\
        &\ll A^k\frac{k^{2k(1-\sigma_0)}}{(\log2k)^{2k\sigma_0}} + A^k(\log \log T)^{-k}, 
    \end{align*}
    where $A$ is a positive constant. 
\end{lemma}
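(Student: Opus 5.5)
The plan is to mirror the proof of Lemma~\ref{higher general}, with $\gamma(t)=(\log t)^r$ throughout. Two ingredients replace the $\mathcal{F}'$ inputs. The zero-density input of Lemma~\ref{measure error} is replaced by Lemma~\ref{measure error log}. The length-$Y$ approximation is taken directly from Lemma~\ref{appro}, since here $Y=(\log\log T)^{14/(\sigma_0-1/2)}$ is already small; so no intermediate set $\mathcal{L}_\gamma$ with a longer polynomial is needed.

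\textbf{Step 1 (pointwise approximation).} For $t\in\mathcal{I}_r(T)$ the point $(\log t)^r$ lies in $[(\log T/2)^r,(\log 5T/2)^r]$ minus the exceptional set $\bm{\ell}$. Apply Lemma~\ref{appro} with height $\asymp(\log T)^r$, with $y=Y$, and with $\sigma_0$ in the role of the fixed abscissa; note $\sigma\ge\sigma_0$. This gives
\[
\log\zeta(\sigma+i(\log t)^r)=P_Y(\sigma+i(\log t)^r)+O\bigl(Y^{(1/2-\sigma_0)/2}(\log\log T)^{3}\bigr).
\]
The height term $(\log\text{height})^3\asymp(\log\log T)^3$. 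Since $Y^{(1/2-\sigma_0)/2}=(\log\log T)^{-7}$, the error is $O((\log\log T)^{-4})$, which is in particular $\le(\log\log T)^{-1/2}$.

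\textbf{Step 2 (splitting).} Use $|a+b|^{2k}\le 2^{2k-1}(|a|^{2k}+|b|^{2k})$ to write the integral as $S_1+S_2$. For $S_1$, enlarge $\mathcal{I}_r(T)$ to $[T,2T]$ and apply Lemma~\ref{higher P general}, which only needs (F1), (F2) and \eqref{Y}; the introduction of this subsection notes these hold for $(\log t)^r$. This gives
\[
S_1\le 2^{2k}\left(\frac{Ck^{1-\sigma}}{(\log 2k)^{\sigma}}\right)^{2k}\le A^k\frac{k^{2k(1-\sigma_0)}}{(\log 2k)^{2k\sigma_0}},
\]
using $\sigma\ge\sigma_0$ and $k\ge1$. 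When $k^{1-\sigma}(\log 2k)^{-\sigma}<1$, the constant absorbs the comparison. For $S_2$, the pointwise bound from Step 1 gives
\[
S_2\ll 2^{2k}(\log\log T)^{-8k}\,\frac{\meas(\mathcal{I}_r(T))}{T}\le A^k(\log\log T)^{-k}.
\]

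\textbf{Main obstacle.} The only delicate point is verifying that Lemma~\ref{appro} applies with the rescaled height. One must check the hypothesis "height $\ge y+3$", which is trivial here. One must also check that the $(\log T)^3$ factor in Lemma~\ref{appro} becomes $(\log\log T)^3$, which is absorbed by the power of $Y$; the choice of exponent $14/(\sigma_0-1/2)$ is what makes this work. Positivity of measure is not needed in this lemma, since Lemma~\ref{measure error log} is used only later for the density of $\mathcal{I}_r(T)$. Hence the bound holds uniformly in $k$, with $A$ and the implied constant independent of $k$.
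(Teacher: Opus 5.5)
Your proposal is correct and follows the paper's route. The paper proves this lemma by repeating the proof of Lemma~\ref{higher general}: split the integral into $S_1+S_2$, bound $S_1$ by Lemma~\ref{higher P general}, and bound $S_2$ by the pointwise approximation on the good set. Since $\mathcal{I}_r(T)$ is defined directly through $\bm{\ell}(\cdot\,;Y)$, that pointwise approximation comes straight from Lemma~\ref{appro}, exactly as you describe.

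You are also right that only $\meas(\mathcal{I}_r(T))\le T$ is used, so citing Lemma~\ref{measure error log} is not really needed. One cosmetic slip: in $S_1$, the passage from $\sigma$ to $\sigma_0$ needs the constant precisely when $k\log 2k<1$ (i.e.\ $k=1$), not when $k^{1-\sigma}(\log 2k)^{-\sigma}<1$.
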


\begin{proof}
    By using Lemma~\ref{measure error log}, we can prove this lemma in the same way as Lemma~\ref{higher general}. 
\end{proof}

Additionally, we can show the following proposition. 

\begin{proposition} \label{W log}
    Let $L = c_3((r-1)\log\log T)^{\sigma}$, where $c_3$ is a small positive constant. 
    Then, 
    \[
    \frac{1}{T} \int_T^{2T} W_{L, \mathcal{R}}(P_Y(\sigma + i(\log t)^r))dt = \mathbb{E}[W_{L, \mathcal{R}}(\log\zeta(\sigma, X))] + O\left((\log\log T)^{-2} \right)
    \]
    for $w = u + iv$. 
    
\end{proposition}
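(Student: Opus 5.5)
We follow the proof of Proposition~\ref{W general}, replacing $\gamma(t)$ by $(\log t)^r$ and $\log\gamma(T)$ by $\log\log T$. Since the integrand is a linear combination of characteristic functions of $P_Y$, we write
\[
\Lambda_{\sigma,r,T}(w)=\frac1T\int_T^{2T}\exp\bigl(i\langle P_Y(\sigma+i(\log t)^r),w\rangle\bigr)\,dt .
\]
Then $\frac1T\int_T^{2T}W_{L,\mathcal R}(P_Y(\sigma+i(\log t)^r))\,dt$ is the same double integral over $[0,L]^2$ against $G(u/L)G(v/L)f_{a_1,b_1}\overline{f_{a_2,b_2}}\,du\,dv/(uv)$, with $\Lambda_{\sigma,r,T}(2\pi \overline w)$ and $\Lambda_{\sigma,r,T}(2\pi w)$ in place of the characteristic function of $\log\zeta(\sigma,X)$. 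So it suffices to prove, uniformly for $|u|,|v|\le L$,
\[
\Lambda_{\sigma,r,T}(2\pi w)=\Lambda_\sigma(2\pi w)+O\bigl((\log\log T)^{-5}\bigr).
\]
Given this, the difference of the double integrals is $\ll L^2(\log\log T)^{-5}(b_1-a_1)(b_2-a_2)$. For $\mathcal R\in\mathcal S_{2M}$ with $M\asymp\log\log\log T$ and $L\asymp(\log\log T)^\sigma$, this is $\ll(\log\log T)^{-2}$.

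To prove the characteristic function estimate, choose
\[
V=c_4\,((r-1)\log\log T)^{1-\sigma}(\log\log\log T)^{-1},\qquad c_4 \text{ small}.
\]
Then $V^{1/(1-\sigma)}(\log V)^{\sigma/(1-\sigma)}\asymp c_4^{1/(1-\sigma)}(r-1)\log\log T$ up to a power of $\log\log\log T$, and $(V\log V)^{\sigma/(1-\sigma)}\gg L$ once $c_3$ is small. Hence $|i\pi w|\le d_1(V\log V)^{\sigma/(1-\sigma)}$ for $|u|,|v|\le L$. By Lemma~\ref{large deviation general}, applicable because $(\log t)^r$ satisfies (F1) and (F2), we may restrict the $t$-integral to $A_T$ at cost $\exp(-c_1V^{1/(1-\sigma)}(\log V)^{\sigma/(1-\sigma)})$. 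Proposition~\ref{prop1 general} then replaces the restricted integral by $\mathbb E[\exp(i\langle P_Y(\sigma,X),2\pi w\rangle)]$. Finally \cite[Lemma~3.4]{EIM} passes from $P_Y(\sigma,X)$ to $\log\zeta(\sigma,X)$ with error $\ll L\,Y^{1/2-\sigma}$. With $Y=(\log\log T)^{14/(\sigma_0-1/2)}$ this last error is far smaller than $(\log\log T)^{-5}$.

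The main obstacle is the hypotheses of Proposition~\ref{prop1 general}, which involve $T\tilde\gamma(T)\asymp r(\log T)^{r-1}$. This is only a power of $\log T$, not of $T$, so it limits how large $V$ may be. Condition \eqref{Y range general} requires
\[
\log Y\le \frac{\log(T\tilde\gamma(T))}{V^{1/(1-\sigma)}(\log V)^{\sigma/(1-\sigma)}}\approx \frac{(r-1)\log\log T}{C\,c_4^{1/(1-\sigma)}(r-1)\log\log T}.
\]
Since $\log Y\asymp\log\log\log T$, this forces a slightly smaller $V$, namely $V^{1/(1-\sigma)}(\log V)^{\sigma/(1-\sigma)}\asymp (r-1)\log\log T/\log\log\log T$. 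That loses only a $\log\log\log T$ factor, which I will absorb by adjusting $c_4$ and, if necessary, weakening $L$ by that factor.

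The same balance controls the off-diagonal error
\[
\frac{1}{T\tilde\gamma(T)}\bigl((V\log V)^{\sigma/(1-\sigma)}Y\bigr)^{V^{1/(1-\sigma)}(\log V)^{\sigma/(1-\sigma)}} .
\]
Its logarithm is at most $-(r-1)\log\log T+\tfrac12(r-1)\log\log T$ once $c_4$ is small enough. The factor $(r-1)$ in front of $\log\log T$ is exactly why $L$ is taken as $c_3((r-1)\log\log T)^\sigma$. Likewise, the exponential large-deviation terms are $\exp(-c\,c_4^{1/(1-\sigma)}(r-1)\log\log T)$. Making this at most $(\log\log T)^{-5}$ requires the exponent to exceed $5\log\log\log T$, which holds for large $T$ after fixing $c_4$ and $c_3$ depending on $\sigma$ and $r$. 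I would check these three competing requirements on $c_4$ carefully.
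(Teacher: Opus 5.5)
Your proposal is correct and follows the paper's route. The paper proves this by repeating the proof of Proposition~\ref{W general} with $\log\gamma(T)$ replaced by $(r-1)\log\log T$, the size of $\log(T\tilde\gamma(T))$; you do the same via Lemma~\ref{large deviation general} and Proposition~\ref{prop1 general}.

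The ``obstacle'' you flag is not a real loss. Your choice $V=c_4((r-1)\log\log T)^{1-\sigma}(\log\log\log T)^{-1}$ already gives $V^{1/(1-\sigma)}(\log V)^{\sigma/(1-\sigma)}\asymp (r-1)\log\log T/\log\log\log T$, so \eqref{Y range general} holds once $c_4$ is small, and it still gives $(V\log V)^{\sigma/(1-\sigma)}\asymp((r-1)\log\log T)^{\sigma}$. So $L$ need not be weakened, and weakening it would change the stated proposition.
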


\begin{proof}

The proposition follows by repeating the proof of Proposition~\ref{W general}. 

\end{proof}

\begin{proof}[Proof of Theorem~\ref{main2}]
Let $T$ be sufficiently large. 
Put $M = \log\log\log T$, and define $\mathcal S_M$ as in Section~\ref{proof main general}. 
Then, by the same arguments as in Section~\ref{proof main general}, we have
\[
D_{\sigma, r}(T) \ll D_{\sigma, r, Y}(T) + \varepsilon M + (\log\log T)^{-2}, 
\]
where 
\[
D_{\sigma, r, Y}(T) = \sup_{\mathcal{R} \in \mathcal{S}_{2M}} \left|\mathbb{P}_{\mathcal{I}_{r}(T)}(P_Y(\sigma + i(\log t)^r) \in \mathcal{R}) - \mathbb{P}(\log\zeta(\sigma, X) \in \mathcal{R} )\right|, 
\]
and $\varepsilon = (\log\log T)^{-4}$. 
Applying Proposition~\ref{W log} together with the arguments in Section~\ref{proof main general} to estimate $D_{\sigma, r, Y}(T)$, we obtain 
\[
D_{\sigma, r, Y}(T) \ll (\log\log T)^{-2} + L^{-1} \ll ((r-1)\log\log T)^{-\sigma}. 
\]
Thus, we obtain the first assertion. 

Under the Riemann hypothesis, Lemma~\ref{measure error log} is no longer necessary.  
Therefore, we can extend this estimate for $1/2 < \sigma < 1$. 
\end{proof}

\section{Concluding remarks} \label{conclusion}

First, we note that $\mathcal{F}'$ is a proper subset of $\mathcal{F}$. 

\begin{proposition} \label{remark}
    There exists $\gamma \in \mathcal{F}$ such that $\gamma \notin\mathcal{F}'$, i.e. $\mathcal{F}' \subsetneq \mathcal{F}$. 
\end{proposition}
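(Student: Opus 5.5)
We need a function $\gamma\in\mathcal F$ that violates (F4). The plan is to make $\log\gamma(cT)$ enormously larger than any power of $\gamma(T)$ for some fixed $c$. Condition (F3) only forces $\gamma$ to grow at least like a power of $\tau$; it puts no upper bound on the growth. So the natural candidate is a tower with increasing height, or more simply a function built to satisfy $\log\gamma(2T)\ge \gamma(T)^{2}$ along a sequence. The cleanest choice is a function defined by a functional equation, for instance $\gamma(2T)=\exp(\gamma(T)^2)$ on $[T_0,\infty)$. For such a $\gamma$ we get $\log\gamma(2T)=\gamma(T)^2$, which is not $\ll\gamma(T)^{\varepsilon}$. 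Hence (F4) fails with $c=2$ and any $\varepsilon<2$.

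The construction goes as follows. Take $T_0=1$ and a seed $\gamma_0$ on $[1,2]$ that is $C^1$, strictly increasing, with $\gamma_0(1)=a$ large (say $a\ge e$). At the endpoint we need $\gamma_0(2)=e^{a^2}$. Extend by $\gamma(2\tau)=\exp(\gamma(\tau)^2)$, so that on $[2^n,2^{n+1}]$ the function $\gamma$ is the $n$-fold iterate of $g(x)=e^{x^2}$ applied to $\gamma_0(\tau/2^n)$. To make the extension $C^1$ at the junctions, we also match derivatives at $\tau=2$. Differentiating gives $2\gamma'(2\tau)=2\gamma(\tau)\gamma'(\tau)e^{\gamma(\tau)^2}$, so we need $\gamma_0'(2)=a\gamma_0'(1)e^{a^2}$. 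A seed satisfying both endpoint conditions is easy to choose, for example a suitable convex interpolating function. With this, (F1) is immediate.

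For (F2) and (F3) we argue as follows.

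(F3): Write $\tau\gamma'(\tau)/\gamma(\tau)=\tau(\log\gamma)'(\tau)$. Set $h=\log\gamma$. Then $h(2\tau)=e^{2h(\tau)}$, which gives $2h'(2\tau)=2h'(\tau)e^{2h(\tau)}$. Hence $2\tau h'(2\tau)=2\tau h'(\tau)\,e^{2h(\tau)}\ge \tau h'(\tau)$. So the infimum of $\tau h'(\tau)$ over $[2^n,2^{n+1}]$ is nondecreasing in $n$, and it is positive on the seed interval because $\gamma_0'>0$. This gives (F3) with $\alpha=\min_{[1,2]}\tau\gamma_0'/\gamma_0$.

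(F2): We need $\gamma'$ to be monotonic, i.e.\ $\gamma$ convex. Convexity propagates through the recursion, since $\gamma(\tau)=g(\gamma(\tau/2))$ with $g$ convex and increasing, so a convex increasing seed yields a convex increasing $\gamma$. Together with the $C^1$ matching, this makes $\gamma'$ nondecreasing on all of $[1,\infty)$.

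The main obstacle is making the seed simultaneously $C^1$-compatible and convex. Concretely, we need $\gamma_0'(1)\le(e^{a^2}-a)\le\gamma_0'(2)$ in the sense required by convexity, namely that the secant slope $e^{a^2}-a$ lies between the endpoint slopes. With $\gamma_0'(2)=a\gamma_0'(1)e^{a^2}$, choosing $\gamma_0'(1)=1$ makes this hold for $a$ large, and a convex interpolant, say a piecewise-quadratic one suitably smoothed, then exists. Finally, (F4) fails, since $\log\gamma(2T)/\gamma(T)^{\varepsilon}=\gamma(T)^{2-\varepsilon}\to\infty$. This shows $\mathcal F'\subsetneq\mathcal F$.
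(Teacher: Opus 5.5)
Your construction is correct and takes a genuinely different route from the paper.

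\textbf{The paper's route.} The paper fixes $c>1$ and $\varepsilon>0$ in advance and builds $\gamma$ block by block on $[T_n,T_{n+1}]$ with $T_n=(2c)^n$. On $[T_n,cT_n]$ the derivative $\gamma'$ ramps linearly from $m_n$ up to a huge slope $M_n$, and on $[cT_n,T_{n+1}]$ it stays constant. The slope $M_n$ is chosen so that $\gamma(cT_n)\ge\exp(n\gamma(T_n)^{\varepsilon})$. Everything else is read off from $\gamma'$ being continuous, nondecreasing and $\ge 1$:
\begin{itemize}
\item (F1) and (F2) are immediate.
\item (F3) holds with $\alpha=1$, via $\gamma(T)\le 1+(T-1)\gamma'(T)\le T\gamma'(T)$.
\item (F4) fails only along the sparse sequence $T_n$, and only for the pre-chosen pair $(c,\varepsilon)$.
\end{itemize}
No interpolation or composition is needed.

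\textbf{Your route.} Your self-similar definition $\gamma(2\tau)=\exp(\gamma(\tau)^2)$ gives the closed form $\gamma(\tau)=g^{\circ n}(\gamma_0(\tau/2^n))$ on $[2^n,2^{n+1}]$, with $g(x)=e^{x^2}$. It makes (F4) fail at \emph{every} $T$ with the single choice $c=2$ and every $\varepsilon<2$. The price is a Hermite interpolation problem for the seed, plus two propagation arguments:
\begin{itemize}
\item $C^1$ at the knots $2^n$: this follows by induction, since $\gamma'(2\tau)=\gamma(\tau)\gamma'(\tau)e^{\gamma(\tau)^2}$ transports both one-sided derivatives at $2^{n-1}$ to those at $2^n$. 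You should state this induction explicitly rather than only matching at $\tau=2$.
\item Convexity: this follows from composing with the convex increasing map $g$. This is correct.
\end{itemize}

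\textbf{Two simplifications.}
\begin{itemize}
\item The seed needs no smoothing. A $C^1$ piecewise-quadratic with a single knot already interpolates the endpoint values and slopes convexly whenever the secant slope lies strictly between the endpoint slopes. Here that condition is $1<e^{a^2}-a<ae^{a^2}$, which holds for $a\ge e$.
\item Your log-derivative recursion for (F3) is valid but unnecessary. You can use the paper's convexity argument: since $\gamma'\ge\gamma_0'(1)=1$ everywhere, $\gamma(\tau)\le a+(\tau-1)\gamma'(\tau)\le a\tau\gamma'(\tau)$, which is (F3) with $\alpha=1/a$.
\end{itemize}
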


\begin{proof}
    We construct a function $\gamma$ satisfying (F1), (F2), and (F3) but not satisfying (F4). 
    Observe that (F4) is trivially satisfied when $c=1$. 
    Let $c > 1$ and $\varepsilon > 0$.
    Denote $T_0 = 1$, $A_0 := \gamma(1) := 1$, $m_0 := \gamma'(1) := 1$. 
    We define $T_n = (2c)^n$, and define $A_n := \gamma(T_n)$, and $m_n = \gamma'(T_n)$ inductively as follows.
    For $t \in [T_n, cT_n]$, define
    \[
    \gamma'(t) = m_n + \frac{M_n - m_n}{(c-1)T_n}(t - T_n), 
    \]
    where 
    \[
    M_n := \max\left\{m_n + \frac{2\exp(nA_n^{\varepsilon})}{(c-1)T_n}, 1 \right\}, 
    \]
    and for $t \in [cT_n, T_{n+1}]$, set  $\gamma'(t) = M_n$. 
    Then, for $t \in [T_n, cT_n]$, 
    \begin{align} \label{gamma c}
    \gamma(cT_n) = A_n + \frac12(c-1)(M_n + m_n)T_n \ge \exp\left(n A_n^{\varepsilon} \right)
    \end{align}
    holds. 
    Under this setting, we can define 
    \[
    A_{n+1} = \gamma(T_{n+1}) = \gamma(cT_n) + cT_nM_n, \quad m_{n+1} = \gamma'(T_{n+1}) = M_{n}. 
    \]
   By construction, $\gamma'$ is increasing, so $\gamma$ satisfies (F1) and (F2). 
   Moreover, from the following inequality:
    \begin{align*}
        \gamma(T) = 1 + \int_{1}^{T} \gamma'(t)dt \le 1 +  (T-1)\gamma'(T) \le T\gamma'(T), 
    \end{align*}
    $\gamma$ satisfies (F3). 
    Hence, $\gamma \in \mathcal{F}$. 
    However, by \eqref{gamma c}, 
    \begin{align*}
    \frac{\log\gamma(cT_n)}{\gamma(T_n)^\varepsilon} \ge n \to \infty
    \end{align*}
    as $n \to \infty$. 
    Thus, $\gamma$ does not satisfy (F4). 
\end{proof}

Next, we discuss the logarithmic shift $\gamma(t)=\log t$. 
In a proof of universality theorem with a general shift, the following limit is one of the key ingredients:
\begin{align*}
\lim_{T \to \infty} \frac{1}{T}\int_{T_0}^T e^{ic\gamma(t)}dt = 
 \begin{cases}
    1 & \text{if $c = 0 $,} \\
    0                 & \text{if $c \ne 0$,} 
  \end{cases}
\end{align*}
where $\gamma$ is a real-valued function, and $c \in \mathbb{R}$. 
The case $c = 0$ is trivial. 
Therefore, we have to consider the case $c \ne 0$. 
For the logarithmic function $\gamma(t) = \log t$, 
\begin{equation} \label{log int}
\frac{1}{T}\int_{T_0}^T e^{ic\log t}dt = \frac{1}{ic + 1}\left(T^{ic} - T_0^{ic}\right). 
\end{equation}
In general, \eqref{log int} does not have a limit value. 
Thus, this argument does not determine whether universality holds for the logarithmic shift. 
Furthermore, the proof of Theorem~\ref{main1} relies on the admissible range of $Y$ in Proposition~\ref{prop1 general}. 
Indeed, $\log t$ satisfies (F1) and (F2). 
Hence, Lemma~\ref{higher moment general} is valid for $\log t$. 
However, taking $\gamma(t) = \log t$, we see that $T\tilde{\gamma}(T) = 1/2$. 
In other words, we are forced to take $Y$ to be a constant. 
Consequently, we can not determine the value
\[
 \frac{1}{T} \meas\{t \in [1, T] : \log \zeta(\sigma + i\log t) \in \mathcal{R}\}, \quad (T\to\infty)
\]
by this method. 
Therefore, establishing a limit theorem for the logarithmic shift would require some fundamentally different approach.

\subsection*{Acknowledgments} 
The author would like to thank to professor Kohji Matsumoto for his comments. 
This work was financially supported by JSPS Research Fellow (Grant Number:25KJ1412).

\begin{flushleft}
{\footnotesize
{\sc
Graduate School of Mathematics, Nagoya University, Chikusa-ku, Nagoya 464-8602, Japan.
}\\
{\it E-mail address}: {\tt m21029d@math.nagoya-u.ac.jp}
}
\end{flushleft}

\end{document}